
\documentclass [12pt, draft]{article}

\usepackage{amsmath, amsfonts, amsthm, amssymb, latexsym}
\parskip = 10pt
\usepackage{mathtools}
\usepackage[bookmarks]{hyperref}
\usepackage[top=3.0cm, bottom=3.0cm, left=2.5cm, right=2.0cm]{geometry}
\usepackage{graphicx}
\usepackage[utf8]{inputenc}
\usepackage[allfiguresdraft]{draftfigure}
\setlength{\parindent}{0cm}
\theoremstyle{plain}
\usepackage[all]{xypic}
\usepackage{enumerate,longtable}
\newtheorem{thm}{Theorem}[section]
\newtheorem{cor}[thm]{Corollary}

\newtheorem{prop}[thm]{Proposition}
\theoremstyle{definition}
\newtheorem{defn}[thm]{Definition}
\theoremstyle{remark}
\newtheorem{rem}[thm]{Remark}

\numberwithin{equation}{section}
\usepackage{color, fancyhdr, graphicx, wrapfig}
\usepackage{indentfirst}

\newcommand{\G}{\mathcal{G}}

\newcommand{\R}{\mathbb{R}}

\newcommand{\X}{\mathfrak{X}}
\newcommand{\g}{\mathfrak{g}}
\newcommand{\Aut}{\operatorname{Aut}}

\newcommand{\End}{\operatorname{End}}
\newcommand{\ad}{\operatorname{ad}}  
 
\newcommand{\Ad}{\operatorname{Ad}}
\newcommand{\Rank}{\operatorname{rank}}
\newcommand{\const}{\operatorname{const}}

\begin{document}
	\title{\bf Foliations formed by generic coadjoint orbits of a class of 7-dimensional real solvable Lie groups}
	
	\author{{\bf Nguyen Tuyen}
		\footnote{
			Faculty of Mathematics and Computer Science,
			University of Science -- Vietnam National University, Ho Chi Minh City, Vietnam / Faculty of Mathematics and Computer Science Teacher Education, Dong Thap University, Dong Thap Province, Vietnam; 
			E-mail: ntmtuyen@dthu.edu.vn}, 
		{\bf Le Vu}
		\footnote{Department of Mathematics and Economic Statistics, University of Economics and Law, Vietnam National University - Ho Chi Minh City; E-mail:\,vula@uel.edu.vn}}
	\date{}
	\maketitle

\begin{abstract}
	In this paper, we consider exponential, connected and simply connected Lie groups which are corresponding to Lie algebras of dimension 7 such that the nilradical of them is 5-dimensional nilpotent Lie algebra $\g_{5,2}$ in Table \ref{tab1}. In particular, we give a description of the geometry of the generic orbits in the coadjoint representation of some considered Lie groups. We prove that, for each considered group, the family of the generic coadjoint orbits forms a measurable foliation in the sense of Connes. The topological classification of these foliations is also provided.\\[0.2cm]
	
	\noindent\textsl{MSC}: 53C12, 17B08, 22E27, 57R30, 17B30, 22E45 \\
	\textsl{Keywords}: Lie algebra; Lie group; K-orbit; foliation; measurable foliation.
\end{abstract}

\maketitle
\section{INTRODUCTION}\label{sec1}
The study of foliations on manifolds has a long history in mathematics. Since the works of C. Ehresmann and G. Reeb \cite{ER44} in 1944, of C. Ehresmann \cite{E51} in 1951 and of G. Reeb \cite{Re52} in 1952, the foliations on manifolds have enjoyed a rapid development. Nowaday, they become the focus of a great deal of research activity (see \cite{LA74} of H. B. Lawson, Jr.).
In general, the leaf space of a foliation with quotient topology is a fairly intractable topological space. To improve upon the shortcoming, A. Connes \cite{Con82} proposed the notion of measurable foliations in 1982 and associated each such foliation with a $C^*$-algebra. During the last few decades, these concepts of A. Connes have become important tools of non-commutative differential geometry and have attracted much attention from mathematicians around the world. 

In 1962, A. A. Kirillov invented
the method of orbits and it quickly became the most important method in the theory of representations of Lie groups and Lie algebras (see \cite{K76}, Section 15). The key of Kirillov's method of orbits is generic orbits in the coadjoint representation ($K$-orbits for short) of Lie groups. Hence, the problem of describing the geometry of (generic) $K$-orbits of each Lie group is very important to study.

In 1980, for studying Kirillov's method of orbits, Do Ngoc Diep suggested considering the class of MD-groups. For any positive natural number $n$, an MD-group of dimension $n$ (for brevity, an MD$n$-group) in his terms (see \cite{D99}, Section 4.1) is a $n$-dimensional solvable real Lie group whose $K$-orbits are the orbits of zero or maximal dimension. The Lie algebra of each MD$n$-group is called an MD$n$-algebra. It is noticed that, for every MD-group $G$ (of arbitrary dimension), the family of $ K $-orbits of maximal dimension forms a measured foliation in terms of A. Connes \cite{Con82}. This foliation is called MD-foliation associated with $G$.

In recent decades, the second authors of this paper and his colleagues have combined the methods of Kirillov and Connes in studying the geometry of $ K $-orbits as well as the MD-foliation associated with several low dimension MD-groups.
From 1987 to 1993, the completely solved this problem for the class of all MD4-groups in \cite{V87, V90, Vu90, V93} and MD4-foliations. In the period 2008--2014, similar results have been proposed for MD5-groups and MD5-foliations in \cite{VT08, VH09, VHT14}. Although several partial results have been investigated, the general properties of MD-class and the complete classification of MD-algebras and MD-foliations is still open.

This paper can be considered as a continuation of \cite{VTTTT21} in which a full classification of 7-dimensional real solvable Lie algebras having a 5-dimensional nilradical was achieved. For solvable Lie algebra $\G$ of dimension 7, the nilradical $N(\G)$ are of dimension from 4 to 7 by Mubarakzyanov \cite[Theorem 5]{Mub63a}. Those with the nilradical of dimension 4, 6 and 7 are classified in \cite{Gon98, HT08, Par07}. When $\dim N(\G)=5$, there are nine cases listed in Table \ref{tab1} below \cite[Proposition 1]{Dix58}. In this table, $N(\G)$ is generated by $\{X_1, X_2, X_3, X_4, X_5\}$.
\begin{center}
	\begin{longtable}{p{.050\textwidth} p{.140\textwidth} p{.630\textwidth}}
		\caption{5-dimensional nilpotent Lie algebras}\label{tab1} \\
		\hline Case& Algebras & Non-zero Lie brackets \endfirsthead 
		\caption*{Table 1 (continued)} \\ 
		\hline Case& Algebras & Non-zero Lie brackets 	\\	
		\hline \endhead \hline
		\endlastfoot \hline
		1	&$(\g_1)^5$ & The 5-dimensional abelian Lie algebra \\  
		2&$(\g_1)^2 \oplus \g_3$ & $[X_1, X_2] = X_3$ \\  
		3&$\g_1 \oplus \g_4$ & $[X_1, X_2] = X_3$, $[X_1, X_3] = X_4$ \\ 
		4&$\g_{5,1}$ & $[X_1, X_2] = X_5$, $[X_3, X_4] = X_5$ \\
		5&	$\g_{5,2}$ & $[X_1, X_2] = X_4$, $[X_1, X_3] = X_5$ \\
		6&	$\g_{5,3}$ & $[X_1, X_2] = X_4$, $[X_1, X_4] = X_5$, $[X_2, X_3] = X_5$ \\
		7&	$\g_{5,4}$ & $[X_1, X_2] = X_3$, $[X_1, X_3] = X_4$, $[X_2, X_3] = X_5$ \\
		8&	$\g_{5,5}$ & $[X_1, X_2] = X_3$, $[X_1, X_3] = X_4$, $[X_1, X_4] = X_5$ \\
		9&	$\g_{5,6}$ & $[X_1, X_2] = X_3$, $[X_1, X_3] = X_4$, $[X_1, X_4] = X_5$, $[X_2, X_3] = X_5$ \\ \hline
	\end{longtable} 
\end{center}
The cases 1, 4, 6, 8 and 9 were classified in \cite{NW94, RW93, SK10, SW05, SW09,  WLD08}. The remaining cases are classified in \cite{VTTTT21}, thus a full classification of 7-dimensional indecomposable solvable Lie algebras were achieved.

Combining the idea of the Kirillov's method of orbits and the Connes' method, we hope that some beautiful properties of MD-groups can be generalized for a larger class of solvable Lie groups, including all Lie groups corresponding to Lie algebras classified by the authors et al. in \cite{VTTTT21}. More precisely, we would like to study the properties that are similar to those of MD-groups for the Lie groups corresponding to Lie algebras listed in \cite{VTTTT21}. Among these Lie groups, the ones of Lie algebras in Case 5 of Table \ref{tab1} will be chosen to study in this paper because they are not trivial and not too complicated. 
The main results of the paper are as follows. First, we describe maximal dimensional $ K $-orbits of the considered Lie groups which are exponential, connected and simply connected. Second, we proved that the family of all generic maximal dimensional $K$-orbits of the considered Lie groups forms measurable foliations (in the sense of Connes \cite{Con82}). Finally, we give the topological classification of these foliations and describe the Connes' C*-algebras of them. Furthermore, the method is applicable for all of the remaining Lie algebras listed in \cite{VTTTT21} and they will be studied in detail later on.

The paper is organized into four sections, including this introduction. In Section 2 we will recall some preliminary notions and properties which will be used throughout the paper. In particular, we will introduce in that section the list from \cite{VTTTT21} of 7-dimensional solvable Lie algebras having nilradical $\g_{5,2}$. Section 3 will be devoted to setting and proving the main results of the paper. We also give some concluding remarks in the last Section.

\section{PRELIMINARIES}\label{sec2}
\subsection{The Coadjoint Representation and K-orbits of a
	Lie Group}

Let $G$ be a Lie group, $\G$ be its Lie algebra, and $\G^*$ be the dual space to $\G$. Recall that $\left\langle F,X\right\rangle$ will denote the value of every $F \in \G^*$ at any $X \in \G$. 

\begin{defn}(see \cite[$\S$15]{K76})
	The action
	\[\Ad: G \rightarrow \Aut(\G), \, g \mapsto Ad(g)=(L_g \circ R_{g^{-1}})_* ; \, \, \forall g \in G \]
	where $L_g, \, R_{g^{-1}}$ are left-translation, right-translation by $g, g^{-1}$ in $G$, respectively and $h_*$ means the differential (or tangent map) of any diffeomorphism $h$ of $G$. The operator $Ad$
	is called {\it the adjoint representation} of $G$ in $\G$.	
\end{defn}	

The tangent map $\ad: = \Ad_{*}$ of $\Ad$ is called {\it the adjoint representation} of $\G$. It is well-known that, $ \ad: \G \to \mathfrak{gl}(\G), \, X \mapsto \ad_X$ is the homomorphism defined as follows
\begin{equation}\label{ad_X}
\ad_X(Y):= [X,Y] \quad \mbox{for all } Y \in \G. \tag{1}
\end{equation}

\begin{defn}(see \cite[$\S$15]{K76})
	The {\it coadjoint representation} ({\it $K$-representation} for short) of $G$ in $\G^*$
	\[K: G \rightarrow \Aut(\G^*), \, \, g \mapsto K(g) \]
	is defined by
	\[\left\langle K(g)(F),X\right\rangle : = \left\langle F,Ad(g^{-1}) (X)\right\rangle ; \, \, F\in \G^*, X \in \G\]
	where $\langle F, Y \rangle $ denotes the value of $F \in \G^*$ on $Y \in \G$.
\end{defn}

\begin{defn}(see \cite[$\S$15]{K76})
	Each orbit of $G$ in the $K$-representation is called
	a {\it coadjoint orbit}  or {\it $K$-orbit} for short.
\end{defn}
\noindent For every $F \in \G^*$, the $K$-orbit containing $F$ is denoted by $\Omega_F$ and we have
\[\Omega_F=\{K(g)(F) \; | \; g \in G\}.\]
Note that the dimension of each $K$-orbit of $G$ is always even (see \cite[$\S$15]{K76}).

An important role in the study of Lie groups is played by the one-parameter subgroup $x(t)$ ($t \in \R$) that corresponds to every $X \in \G$ (see \cite[$\S$6]{K76}). We have the following definition.

\begin{defn}(see \cite[$\S$6]{K76})
	The {\it exponential mapping} ${\rm exp}: \G \rightarrow G$ is defined by ${\rm exp} (X): = x(1)$ for any $X \in \G$. A Lie group $G$ is called {\it exponential} if and only if its exponential mapping $\exp: \G \mapsto G$ is a diffeomorphism.
\end{defn}

\begin{prop}[see \cite{Sa57}]\label{pro exp}
	Assume that $G$ is finite-dimensional connected and simply connected  real solvable Lie groups. Then, the following assertion are equivalent
	\begin{enumerate}
		\item [(i)] $G$ is exponential. 
		\item [(ii)] For any $X \in \G=Lie(G)$, $\ad_X$ has no nonzero imaginary eigenvalues.
	\end{enumerate}
\end{prop}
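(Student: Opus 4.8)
The plan is to prove the two implications separately. The direction (i) $\Rightarrow$ (ii) is short and uses only the formula for the differential of $\exp$; the direction (ii) $\Rightarrow$ (i) is the substantial part and will go by induction on $\dim G$ via the semidirect-product structure of $G$.

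For (i) $\Rightarrow$ (ii), I would invoke the classical identity
\[
d(\exp)_X \;=\; d(L_{\exp X})_e \circ \frac{1 - e^{-\ad_X}}{\ad_X}, \qquad \frac{1 - e^{-z}}{z} := \sum_{k \ge 0} \frac{(-1)^k z^k}{(k+1)!}.
\]
If $G$ is exponential, then $\exp$ is in particular a local diffeomorphism, so $\frac{1 - e^{-\ad_X}}{\ad_X}$ is invertible for every $X \in \G$; since the entire function $z \mapsto \frac{1 - e^{-z}}{z}$ vanishes exactly at the nonzero integral multiples of $2\pi i$, this forces $\ad_X$ to have no eigenvalue in $2\pi i\,\mathbb{Z}\setminus\{0\}$ for any $X$. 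If some $\ad_X$ had a nonzero imaginary eigenvalue $i\beta$ (with $0 \ne \beta \in \R$), then $\ad_{(2\pi/\beta)X} = (2\pi/\beta)\,\ad_X$ would have the eigenvalue $2\pi i$ — a contradiction. Hence (ii) holds.

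For (ii) $\Rightarrow$ (i), I would induct on $n = \dim G$, the cases $n \le 1$ being immediate. Since $\G$ is solvable, $[\G,\G] \ne \G$; choose any hyperplane $\mathfrak h \supseteq [\G,\G]$ (automatically an ideal) and a vector $Y \in \G \setminus \mathfrak h$. Let $H$ be the connected subgroup with Lie algebra $\mathfrak h$; since $G$ is connected, simply connected and solvable, $H$ is closed, connected and simply connected, and the one-parameter subgroup $t \mapsto \exp_G(tY)$ splits the exact sequence $1 \to H \to G \to \R \to 1$, so $G \cong H \rtimes_\alpha \R$ with $\R$ acting through $t \mapsto \exp\big(t\,\ad_Y|_{\mathfrak h}\big)$. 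Condition (ii) is inherited by $\mathfrak h$, because $\ad^{\mathfrak h}_X = \ad^{\G}_X|_{\mathfrak h}$ for $X \in \mathfrak h$ and the spectrum of the restriction of $\ad^{\G}_X$ to an invariant subspace is contained in that of $\ad^{\G}_X$; hence $\exp_H$ is a diffeomorphism by the induction hypothesis. Writing $D := \ad_Y|_{\mathfrak h}$, a direct computation in the semidirect product gives, for $X \in \mathfrak h$ and $s \in \R$,
\[
\exp_G(X + sY) \;=\; \exp_H\big(\Phi(s)\,X\big) \cdot \exp_G(sY), \qquad \Phi(s) := \sum_{k \ge 0} \frac{(sD)^k}{(k+1)!},
\]
where $\exp_G(sY)$ is the unique point of the $Y$-one-parameter subgroup lying over $s \in G/H \cong \R$. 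Condition (ii) prevents $D$ from having a nonzero imaginary eigenvalue, so the eigenvalues of $\Phi(s)$, namely the numbers $(e^{s\lambda}-1)/(s\lambda)$ (with value $1$ at $s\lambda = 0$) for $\lambda$ an eigenvalue of $D$, are never zero; thus $\Phi(s)$ is invertible for every $s \in \R$. Bijectivity of $\exp_G$ then follows: for $g \in G$ lying over $s$, first solve $\exp_H(W) = g\cdot\exp_G(sY)^{-1} \in H$ for a unique $W \in \mathfrak h$, then solve $\Phi(s)\,X = W$ for a unique $X \in \mathfrak h$. Finally the differential formula of the previous step, together with (ii), shows $d(\exp_G)_X$ is invertible for every $X$, so $\exp_G$ is a local diffeomorphism; a bijective local diffeomorphism is a diffeomorphism, which gives (i).

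The main obstacle is the bijectivity step in (ii) $\Rightarrow$ (i): one has to set up the semidirect decomposition with care (simple connectedness is used both to make $H$ closed and to split the sequence), justify the closed form of $\exp_G$ on $\mathfrak h \oplus \R Y$, and — the decisive point — check that $\Phi(s)$ is invertible for \emph{all} real $s$. This last point is precisely where the hypothesis ``no nonzero imaginary eigenvalue'' is used in full strength: an eigenvalue $i\mu \ne 0$ of $D$ would make $\Phi(2\pi/\mu)$ singular, so invertibility for a single value of $s$ would not suffice. The remaining items — the value $\Phi(0) = I$ and the case $s = 0$, independence of the choices of $\mathfrak h$ and $Y$, and the passage from ``bijective local diffeomorphism'' to ``diffeomorphism'' — are routine.
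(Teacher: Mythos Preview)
The paper does not prove this proposition at all; it is quoted as a classical result of Saito \cite{Sa57} and used as a black box. So there is no ``paper's own proof'' to compare against, and your write-up is really an attempt at an independent proof of Saito's theorem.

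Your argument for (i) $\Rightarrow$ (ii) via the differential of $\exp$ is correct.

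For (ii) $\Rightarrow$ (i), the inductive set-up $G \cong H \rtimes_\alpha \R$ with $H$ simply connected and $D = \ad_Y|_{\mathfrak h}$ is the right framework, and the reduction ``local diffeomorphism $+$ bijection $\Rightarrow$ diffeomorphism'' is sound. The gap is the displayed identity
\[
\exp_G(X + sY) \;=\; \exp_H\bigl(\Phi(s)\,X\bigr)\cdot\exp_G(sY),\qquad \Phi(s)=\sum_{k\ge 0}\frac{(sD)^k}{(k+1)!},
\]
which is \emph{false} as soon as $H$ is nonabelian. Writing $\eta(t) := \exp_G(t(X+sY))\exp_G(-tsY)\in H$, one gets the nonautonomous left ODE $\eta(t)^{-1}\eta'(t)=e^{tsD}X$; its time-$1$ value is a time-ordered (product) exponential, not $\exp_H\!\bigl(\int_0^1 e^{\tau sD}X\,d\tau\bigr)=\exp_H(\Phi(s)X)$, unless the curve $\tau\mapsto e^{\tau sD}X$ lies in an abelian subalgebra. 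A concrete counterexample: take $\mathfrak h$ the Heisenberg algebra with $[P,Q]=Z$, $D(P)=P$, $D(Q)=-Q$, $D(Z)=0$, $X=P+Q$, $s=1$; then $\eta(1)=\exp_H\bigl((e-1)P+(1-e^{-1})Q+(1-\sinh 1)Z\bigr)$, whereas $\Phi(1)(P+Q)=(e-1)P+(1-e^{-1})Q$ has zero $Z$-component. Thus your proof of bijectivity, which hinges on inverting $\Phi(s)$, breaks down. The classical proofs (Saito, Dixmier) establish bijectivity of each fiber map $\psi_s\colon X\mapsto\exp_G(X+sY)\exp_G(-sY)$ by a more delicate argument (e.g.\ a degree/properness or continuity-in-$s$ argument starting from $\psi_0=\exp_H$), not by an explicit closed formula; you would need to replace the incorrect identity by such an argument to complete the proof.
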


As mentioned above, for each Lie group $G$, the problem of describing the geometry of $K$-orbits of $G$ is very important to study. Here in this paper, 
we want to have a method of description in the case where the group law of $ G $ is not explicitly stated but only knows the structure of Lie algebra $\G$ of $G$. Then, the exponential mapping and its naturality is very helpful to us. 

Recall that the group $\Aut(\G)$ of all automorphisms of $\G$ is also a Lie group and its Lie algebra is exactly the algebra $\End(\G)$ of all endomorphisms on $\G$. Let $\exp_G: \G \longrightarrow G$ and $\exp: \End(\G) \longrightarrow \Aut(\G)$ be the exponential mappings of $G$ and $ \Aut(\G) $, respectively. In fact, we have commutative rectangle as follows: 
\[
\xymatrix
{
	G \ar[r]^{{\rm Ad} \hspace{20pt}} & {\rm Aut} \left( \mathcal{G} \right) \\
	\mathcal{G} \ar[r]^{{\rm ad} \hspace{20pt}} \ar[u]^{{\rm exp}_G} & {\rm End} \left( \mathcal{G} \right) \ar[u]_{{\rm exp}}
}
\]
That means $\Ad \circ \exp_G=\exp \circ \ad$ (see \cite[$\S$6]{K76}).

For each $U \in \G$, each $F \in \G^*$, we determine $F_U \in \G^*$ as follows:
\[\left\langle F_U, X \right\rangle = \left\langle F, \left( {\rm exp} \circ {\rm ad}_U \right) \left( X \right) \right\rangle; \, \, \forall X \in \G\]
and denote $\Omega_F(\G)=\left\lbrace F_U: U \in \mathcal{G} \right\rbrace.$

\begin{prop}[see \cite{V87,  V90, Vu90}]\label{2.1}
	If $\Omega_F$ is the $K$-orbit of $G$ passing $F$ then 
	\[
	\Omega_F \supset \Omega_F(\G).
	\]
	Furthermore, if $\exp_G$ is surjective then $ \Omega_F = \Omega_F(\G).$
\end{prop}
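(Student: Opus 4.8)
The plan is to prove the two assertions separately, using the naturality square $\Ad \circ \exp_G = \exp \circ \ad$ recalled above. First I would establish the inclusion $\Omega_F \supset \Omega_F(\G)$. Fix $U \in \G$ and set $g = \exp_G(U) \in G$. By the commutative rectangle we have $\Ad(g) = \Ad(\exp_G(U)) = \exp(\ad_U)$, and hence $\Ad(g^{-1}) = \Ad(g)^{-1} = \exp(\ad_U)^{-1} = \exp(-\ad_U) = \exp(\ad_{-U})$. Now I would unwind the definition of the $K$-representation: for every $X \in \G$,
\[
\langle K(g)(F), X \rangle = \langle F, \Ad(g^{-1})(X) \rangle = \langle F, \exp(\ad_{-U})(X) \rangle = \langle F_{-U}, X \rangle,
\]
so $K(g)(F) = F_{-U} \in \Omega_F$. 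Since $U$ ranges over all of $\G$, so does $-U$, and therefore every element $F_U$ of $\Omega_F(\G)$ is of the form $K(g)(F)$ for some $g = \exp_G(-U) \in G$; this gives $\Omega_F(\G) \subset \Omega_F$.

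For the second assertion, suppose $\exp_G \colon \G \to G$ is surjective. Then every $g \in G$ can be written $g = \exp_G(U)$ for some $U \in \G$, and the computation above shows $K(g)(F) = F_{-U} \in \Omega_F(\G)$. Letting $g$ range over $G$ yields $\Omega_F \subset \Omega_F(\G)$, and combined with the inclusion already proved we get $\Omega_F = \Omega_F(\G)$.

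I do not expect any serious obstacle here: the statement is essentially a formal consequence of the naturality of the exponential map together with the definitions of $\Ad$, $\ad$, and the coadjoint action. The only point requiring a little care is the bookkeeping of signs — the $K$-representation is defined via $\Ad(g^{-1})$ rather than $\Ad(g)$, so one must pass from $\exp(\ad_U)$ to $\exp(\ad_{-U})$ and observe that this reindexing does not shrink $\Omega_F(\G)$. In the setting of this paper the relevant groups are connected and simply connected solvable (indeed exponential by Proposition~\ref{pro exp}), so $\exp_G$ is in fact a diffeomorphism and the hypothesis of the second part is automatically satisfied; the equality $\Omega_F = \Omega_F(\G)$ is what makes the subsequent explicit description of the generic $K$-orbits purely a matter of computing with $\exp \circ \ad_U$ on $\G^*$.
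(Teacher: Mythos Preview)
Your argument is correct. The paper does not actually supply its own proof of this proposition; it is stated with citations to \cite{V87, V90, Vu90} and used as a black box. Your proof via the naturality square $\Ad \circ \exp_G = \exp \circ \ad$ and the sign bookkeeping $K(\exp_G(U))(F) = F_{-U}$ is the standard one and is exactly what the cited references contain, so there is nothing to compare against here beyond noting that your write-up is complete and accurate.
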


\begin{cor}[see \cite{V87, V90, Vu90}]\label{cor exp}
	If $G$ is an exponential Lie group then $\Omega_F = \Omega_F(\G)$ for every $F \in \G^*$. 
\end{cor}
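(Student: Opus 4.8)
The plan is to derive the corollary immediately from Proposition \ref{2.1} together with the definition of an exponential Lie group. Recall that, by that definition, $G$ being exponential means precisely that $\exp_G : \G \to G$ is a diffeomorphism; in particular $\exp_G$ is surjective, so the extra hypothesis in the second assertion of Proposition \ref{2.1} is automatically met, and one concludes $\Omega_F = \Omega_F(\G)$ for every $F \in \G^*$.

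To keep the argument self-contained I would first recall the mechanism behind Proposition \ref{2.1}. Fixing $F \in \G^*$ and $U \in \G$, I would use the commutative rectangle $\Ad \circ \exp_G = \exp \circ \ad$ and the identity $(\exp_G U)^{-1} = \exp_G(-U)$ to compute, for all $X \in \G$,
\[
\bigl\langle K(\exp_G U)(F), X \bigr\rangle = \bigl\langle F, \Ad(\exp_G(-U))(X)\bigr\rangle = \bigl\langle F, (\exp \circ \ad_{-U})(X)\bigr\rangle = \langle F_{-U}, X\rangle ,
\]
so that $K(\exp_G U)(F) = F_{-U}$. Letting $U$ range over $\G$ then gives $\{K(\exp_G U)(F) : U \in \G\} = \{F_U : U \in \G\} = \Omega_F(\G)$. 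Since $\{\exp_G U : U \in \G\} \subseteq G$, this set is contained in $\Omega_F = \{K(g)(F) : g \in G\}$, which is the inclusion $\Omega_F \supseteq \Omega_F(\G)$; and when $\exp_G$ is onto one has $\{\exp_G U : U \in \G\} = G$, which forces equality.

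Finally, the corollary is just the specialisation of this to the exponential case: a diffeomorphism is in particular surjective, so Proposition \ref{2.1} applies verbatim. I do not expect any real obstacle here; the only point deserving a (trivial) remark is that the notion of exponential adopted in the paper — namely that $\exp_G$ is a diffeomorphism — is a priori stronger than the mere surjectivity needed in Proposition \ref{2.1}, so the implication goes through for every exponential $G$ without any extra assumption (the solvability, connectedness and simple connectedness of $G$ enter only later, through Proposition \ref{pro exp}, when one wants a workable criterion for recognising which $G$ are exponential).
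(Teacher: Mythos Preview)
Your proposal is correct and follows exactly the intended approach: the paper presents this corollary as an immediate consequence of Proposition~\ref{2.1}, since an exponential Lie group by definition has $\exp_G$ a diffeomorphism, hence surjective. Your additional recap of why Proposition~\ref{2.1} holds (via the naturality square $\Ad\circ\exp_G=\exp\circ\ad$ and the computation $K(\exp_G U)(F)=F_{-U}$) is not strictly needed for the corollary itself but is accurate and harmless.
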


In order to define the dimension of the $K$-orbits
${\Omega}_{F}$ for each $F \in \G^*$, it is useful to
consider the skew-symmetric bilinear form $B_{F}$ on
$\G$ as~follows
\begin{equation}\label{eq-BF}
B_{F}(X, Y) := \langle F, [X, Y]\rangle; \, \forall\, X, Y \in
\G. \tag{2}
\end{equation}

Denote the stabilizer of $F$ under the $K$-representation of
$G$ in $\mathcal{{G}^{*}}$ by $G_{F}$ and ${\G}_{F}$ :=
Lie($G_{F}$). We shall need in the sequel the following result.

\begin{prop} [see \cite{K76}, $\S$15]\label{Prop2.4} For any element $F \in \G^*$, we have
	\begin{enumerate}
		\item[(i)] $\ker B_{F} = {\G}_{F}$.
		\item[(ii)] $\dim{\Omega}_{F} = \dim \G - \dim{\G}_{F} = \Rank B_F$.
	\end{enumerate} 
\end{prop}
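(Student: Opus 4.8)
The plan is to realize each coadjoint orbit $\Omega_F$ as the homogeneous space $G/G_F$ and to read off both assertions from the differential at $F$ of the orbit map $g \mapsto K(g)(F)$. This is the classical computation behind Kirillov's orbit method, so I do not expect a serious obstacle, only the one mild point noted at the end.

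First I would compute the infinitesimal coadjoint action. Fixing $X \in \G$ and setting $c_X(t) := K(\exp_G(tX))(F)$, the definition of $K$ together with the identity $\Ad\circ\exp_G = \exp\circ\ad$ gives $\Ad(\exp_G(-tX)) = \exp(-t\,\ad_X)$, whence $\langle c_X(t), Y\rangle = \langle F, \exp(-t\,\ad_X)(Y)\rangle$ for all $Y \in \G$, and differentiating at $t = 0$ yields
\[
\left\langle \tfrac{d}{dt}\Big|_{t=0} c_X(t),\, Y\right\rangle = -\langle F, [X,Y]\rangle = -B_F(X,Y).
\]
So the differential of the orbit map at the identity is the linear map $\varphi_F\colon \G \to \G^*$, $X \mapsto -B_F(X,\cdot\,)$; its image is the tangent space $T_F\Omega_F$, and its kernel is $\ker\varphi_F = \{X \in \G : B_F(X,Y)=0 \text{ for all } Y\} = \ker B_F$. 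Part (i) then follows: since $G_F$ is closed in $G$, its Lie algebra is $\G_F = \{X \in \G : \exp_G(tX)\in G_F \text{ for all } t\}$, and because $t \mapsto K(\exp_G(tX))$ is a one-parameter group of linear maps of $\G^*$, the curve $c_X$ is constantly $F$ precisely when $\dot c_X(0) = \varphi_F(X) = 0$; hence $\G_F = \ker\varphi_F = \ker B_F$.

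For part (ii) I would invoke the orbit--stabilizer theorem for smooth Lie group actions: the orbit map induces an injective immersion of $G/G_F$ onto $\Omega_F$, so that $\Omega_F$ is an (immersed) submanifold with $\dim\Omega_F = \dim G - \dim G_F = \dim\G - \dim\G_F$; equivalently, by rank--nullity applied to $\varphi_F$ and by (i), $\dim\Omega_F = \dim T_F\Omega_F = \dim\varphi_F(\G) = \dim\G - \dim\ker\varphi_F = \dim\G - \dim\G_F$. The identity $\Rank B_F = \dim\G - \dim\G_F$ is then pure linear algebra, since a skew-symmetric bilinear form on a finite-dimensional space has rank equal to the codimension of its radical, and the radical of $B_F$ is $\ker B_F = \G_F$. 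The only point requiring care is the appeal to the orbit--stabilizer theorem, i.e. the fact that $\Omega_F$ genuinely has dimension $\dim\G - \dim\G_F$ and not merely at most that; since a homogeneous space has all of its tangent spaces of equal dimension, it suffices to have the identification $T_F\Omega_F \cong \G/\G_F$ at the single base point $F$ that the first step provides.
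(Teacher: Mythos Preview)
Your argument is correct and is the standard proof of this classical fact from Kirillov's orbit method. Note, however, that the paper does not supply its own proof of this proposition: it is stated with the attribution ``see \cite{K76}, \S15'' and used as a black box, so there is no in-paper argument to compare against. What you have written is precisely the computation one finds in Kirillov's book or any standard reference, so in that sense your approach coincides with the intended (cited) one.
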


\subsection{Foliations and Measurable Foliations}\label{Subsection2.2}

Let $V$ be an $n$-dimensional smooth manifold ($0 < n \in \mathbb{N}$). We always denote its tangent bundle by $TV$, the tangent space of $V$ at $x \in V$ by $T_xV$. 

\begin{defn}(see \cite[Introduction]{Con82})
	A smooth subbundle $\mathcal{F}$ of $TV$ is called {\it integrable} if and only if every $x \in V$ is contained in a submanifold $W$ of $V$ such that $T_pW = \mathcal{F}_p; \forall p \in W$.
\end{defn} 

\begin{defn}(see \cite[Introduction]{Con82})
	A {\it foliation} $(V, \mathcal{F})$ is defined by a smooth manifold $V$ and an integrable subbundle $\mathcal{F}$ of $TV$. Then, $V$ is called the {\it foliated manifold} and $\mathcal{F}$ is called the {\it subbundle defining the foliation}. The dimension of $\mathcal{F}$ is also called the {\it dimension of the foliation} $(V, \mathcal{F})$ and $n - \dim \mathcal{F}$ is called the {\it codimension of the foliation} $(V, \mathcal{F})$ in $V$. Each maximal connected submanifolds $L$ of $V$ such that $T_xL = \mathcal{F}_x (\forall x \in L)$ is called a {\it leaf} of the foliation $(V, \mathcal{F})$.
\end{defn} 

The set of leaves with the quotient topology of $V$ is denoted by $V/\mathcal{F}$ and called the {\it space of leaves} or the {\it leaf space} of $(V, \mathcal{F})$. In general, it is a fairly intractable topological space.

The partition of $V$ in leaves: 
$V=\bigcup_{a \in V/\mathcal{F}}L_{a}$
is geometrically characterized by the following local triviality: each $x \in V$ has a system of local coordinates $\{U; x^1, x^2, \dots, x^n\}$ such that $x \in U$ and for any leaf $L$ with $L \cap U \neq \emptyset$, each connected component of $L \cap U$ (which is called a plaque of the leaf $L$) is given by the equations
$x^{k+1}=c^1, x^{k+2}=c^2, \dots, x^n=c^{n-k},$
where $k=\dim \mathcal{F} < n$ and $c^1, c^2, \dots , c^{n-k}$ are suitable constants. Each such system $\{U, x^1, x^2, \dots, x^n\}$ is called a foliation chart.

A $k$-dimensional foliation can be given by a partition of $V$ in a family $\mathcal{C}$ of its $k$-dimensional submanifolds ($k \in \mathbb{N}, 0 < k < n$) if there exists some integrable $k$-dimensional subbundle $\mathcal{F}$ of $TV$ such that each $L \in \mathcal{C}$ is a maximal connected integral submanifold of $\mathcal{F}$. In this case, $\mathcal{C}$ is the family of leaves of the foliation $(V, \mathcal{F})$. Sometimes $\mathcal{C}$ is identified with $\mathcal{F}$ and we say that $(V, \mathcal{F})$ is formed by $\mathcal{C}$.

\begin{defn}(see \cite[Preliminaries]{Aziz07})\label{ToPo}
	Two foliations $(V_1, \mathcal{F}_1)$ and $(V_2, \mathcal{F}_2)$ are said to be topologically equivalent if and only if there exists a homeomorphism $h: V_1 \rightarrow V_2$ such that for every leaf $L$ of $\mathcal{F}_1$, $h(L)$ is also a leaf of $\mathcal{F}_2$. In other words, $h$ sends leaves of $\mathcal{F}_1$ onto those of $\mathcal{F}_2$. The mapping $h$ is called a topological equivalence of considered foliations. 
\end{defn}

\begin{defn}(see \cite[Section 1]{Con82}) A submanifold $N$ of the foliated manifold $V$ is called a {\it transversal} if and only if $T_{x}V = T_{x}N \oplus {\mathcal{F}}_{x},\, \forall x \in N$. Thus,
	$\dim N = n - \dim \mathcal{F} = codim \mathcal{F}$.	
	A Borel subset $B$ of $V$ such that $B\cap L$ is countable for any leaf L
	is called a {\it Borel transversal} to $(V, \mathcal{F})$.
\end{defn}

\begin{defn}(see \cite[Section 1, 2]{Con82}) A {\it transverse measure} $\Lambda$ for the foliation $(V, \mathcal{F})$ is $\sigma$-additive map 
	$B \mapsto \Lambda (B)$ from the set of all Borel transversals to [0,
	+$\infty$] such that the following conditions are satisfied:
	
	(i) If $\psi$ : $B_{1} \rightarrow B_{2}$ is a Borel bijection and
	$\psi$(x) is on the leaf of any x$\in B_{1}$, then $\Lambda(B_{1}) = \Lambda(B_{2})$.
	
	(ii) $\Lambda(K)< + \infty$ if $K$ is any compact subset of a smooth transversal submanifold of $V$.
	
	By a {\it measurable foliation} we mean a foliation
	$(V, \mathcal{F})$ equipped with some transverse measure~$\Lambda$.\end{defn}

Let $(V, \mathcal{F})$ be a foliation with $\mathcal{F}$ is
oriented. Then the complement of zero section of the bundle
${\Lambda}^{k}(\mathcal{F})$ ($0 < k = \dim \mathcal{F} < n$) has two
components ${\Lambda}^{k}{(\mathcal{F})}^{-}$ and
${\Lambda}^{k}{(\mathcal{F})}^{+}$.
Let $\mu$ be a measure on $V$ and  $\{ U, x^{1}, x^{2}, ..., x^{n} \}$
be a foliation chart of $(V, \mathcal{F})$.
Then $U$ can be identified with the direct product $N \times {\Pi}$
of some smooth transversal submanifold $N$ of $V$ and some plaque
$\Pi$. The restriction of $\mu$ on $U \equiv N \times {\Pi}$ becomes
the product ${\mu}_{N} \times {\mu}_{\Pi}$ of measures ${\mu}_{N}$
and ${\mu}_{\Pi}$ respectively.
Let $X \in C^{\infty}{\bigl({\Lambda}^{k}(\mathcal{F})\bigr)}^{+}$
be a smooth $k$-vector field and ${\mu}_{X}$ be the measure on each leaf L
determined by the volume element $X$.

\begin{defn}(see \cite[Sections 1, 2]{Con82}) The measure $\mu$ is called
	{\it $X$-invariant} if and only if ${\mu}_{X}$ is proportional to
	${\mu}_{\Pi}$ for an arbitrary foliation chart $\{ U, x^{1}, x^{2},
	..., x^{n} \}$.\end{defn}

\begin{defn}(see \cite[Sections 1, 2]{Con82}) Let ($X, \mu$) and  ($Y, \nu$) be two pairs where $X,Y$ $ \in$  $C^{\infty}{\bigl({\Lambda}^{k}(\mathcal{F})\bigr)}^{+}$ and $\mu, \nu$	are measures on $V$ such that $\mu$ is $X$-invariant, $\nu$ is $Y$-invariant. Then ($X, \mu$ ), ($Y, \nu$ ) are called {\it equivalent} if and only if $Y = \varphi X$ and $\mu = \varphi \nu$ for some $\varphi \in C^{\infty}(V).$
\end{defn}

There is a bijection between the set of transverse measures for
$(V, \mathcal{F})$ and the set of equivalent classes of pairs ($X, \mu$), where $X \in C^{\infty}{\bigl({\Lambda}^{k}(\mathcal{F})\bigr)}^{+}$ and $\mu$ is a $X$-invariant measure on $V$. Thus, to prove that $(V, \mathcal{F})$ is measurable, we only need to choose some suitable pair ($X, \mu$) on $V$.


\subsection{$7$-dimensional Solvable Lie Algebras Having Nilradical $\g_{5,2}$}\label{ListLieAlgebras}

For convenience, we shall use notation $\G$ to replace the notation $L$ or $R$ in \cite{VTTTT21}. Specifically, we consider the set \{$\G^{\lambda}_{1}$, $ \G_{2} $, $ \G_{3} $, $ \G_4^{\lambda_1,\lambda_2} $, $\G_{5}$, $ \G_6^{\lambda} $, $ \G_{7} $, $ \G_8^{\lambda} $, $ \G_{9} $, $ \G_{10}^{\lambda} $, $ \G_{11} $, $\G_{12}^{\lambda}$,  $\G_{13}^{\lambda}$, $\G_{14}^{\lambda_1,\lambda_2}$, $\G_{15}$, $\G_{16}^{\lambda}$\} of $7$-dimensional complex or real indecomposable solvable Lie algebras having nilradical $\g_{5,2}$ which are listed by the authors et al. in Table 3 of \cite{VTTTT21}. Each $\G$ has
basis $(X_1, X_2, X_3, X_4, X_5, X, Y)$ and its nilradical is $\g_{5,2}$ in Table \ref{tab1}. 
The operator matrices $a_X:=(\ad_X)^T|_{\g_{5,2}}$, $a_Y:=(\ad_Y)^T|_{\g_{5,2}}$ $\in Mat(5,\R)$ and $[X,Y]$ are given in Table \ref{tab2} as follows

\begin{longtable}{p{.060\textwidth} p{.500\textwidth} p{.200\textwidth}}
	\caption{Solvable Lie algebras with nilradical $\g_{5,2}$}\label{tab2} \\
	\hline No. & $(a_X, a_Y, [X,Y])$ & Condition \endfirsthead
	\caption*{Table 1: Solvable Lie algebras with nilradical $\g_{5,2}$ (continued)} \\ 
	\hline No. & $(\ad_X, \ad_Y, [X,Y])$ & Condition \\ \hline \endhead \hline
	$\G_1^\lambda$ & $(1, -1, 0, 0, 1)$, $(0, 0, 1, 0, 1)$, $\lambda X_4$ & $\lambda\in\{0,1\}$ \\ 
	$\G_2$ & $(1, 0, 0, 1, 1)$, $(0, 0, 1, 0, 1)$ &\\ 
	$\G_3$ & $(0, 1, 0, 1, 0)$, $(0, 0, 1, 0, 1)$& \\ 
	$\G^{\lambda_1,\lambda_2}_4$ & $(1, 0, \lambda_1, 1, 1+\lambda_1)$, $(0, 1, \lambda_2, 1, \lambda_2)$ &  $(\lambda_{1},\lambda_2)\neq (-1,0)$ \\ 
	$\G_5$ & $(0, 0, 1, 0, 1)$, $(1, 1, 0, 2, 1) + E_{12}$& \\ 
	$\G^{\lambda}_6$ & $(1, 1, \lambda, 2, 1+\lambda)$, $(0, 0, 1, 0, 1) + E_{12}$ & $\lambda\in\R$\\ 
	$\G_7$ & $(0, 1, 1, 1, 1)$, $(1, 1, 0, 2, 1) + E_{25}$& \\ 
	$\G^{\lambda}_8$ & $(1, 1+\lambda, \lambda, 2+\lambda, 1+\lambda)$, $(0, 1, 1, 1, 1) + E_{25}$& $\lambda\in\R$\\ 
	$\G_9$ & $(0, 0, 1, 0, 1)$, $(0, 1, 0 , 1, 0) + E_{35}$& \\ 
	$\G^{\lambda}_{10}$ & $(0 , 1 , \lambda, 1, \lambda)$, $(0, 0, 1, 0, 1) + E_{35}$ & $\lambda\in\R$ \\ 
	$\G_{11}$ & $(0, 1, 1, 1, 1)$, $(1, 0, 0, 1, 1) + E_{23} + E_{45}$& \\ 
	$\G^{\lambda}_{12}$ & $(1, \lambda, \lambda, 1+\lambda, 1+\lambda)$, $(0, 1, 1, 1, 1) + E_{23} + E_{45}$& $\lambda\in\R$\\ 
	$\G_{13}^{\lambda}$ & $(0, 1, 1, 1, 1)$, $(\lambda, S_{01}, S_{\lambda1})$ & $\lambda \geq 0$ \\ 
	$\G_{14}^{\lambda_{1},\lambda_2}$ & $(1, \lambda_1, \lambda_1, 1+\lambda_1, 1+\lambda_1)$, $(0, S_{\lambda_21}, S_{\lambda_21})$ & $\lambda_2 \geq 0$ \\ 
	$\G_{15}$ & $(0, S_{01}, S_{01})$, $(0, 1, 1, 1, 1) +E_{25} - E_{34}$& \\ 
	$\G_{16}^{\lambda}$ & $(0, S_{01}, S_{01}) + E_{25}$, $(0, 1, 1, 1, 1) + \lambda(E_{25} - E_{34})$ & $\lambda \geq 0$ \\ 
	\hline
\end{longtable}
Here, as usual, we convention that $[X,Y]$ disappears if $[X,Y]=0$; ${\rm Mat}(n,{\mathbb R})$ denotes the algebra of all real $(n \times n)$-matrices ($0 < n \in {\mathbb N}$); we denote respectively by $(a_1, \dots, a_5)$, $E_{ij}$ and $S_{ab}$  the diagonal matrix ${\rm diag}(a_1, \dots, a_5)$, the $(5 \times 5)$-matrix whose only non-zero entry is 1 in row i, column j ($1 \leq i, j \leq 5$) and $\begin{bmatrix}
a&b\\ -b&a	
\end{bmatrix}$. 

\begin{rem}\label{remarklistLiealgebras} For all Lie algebras in Table \ref{tab2}, we have the following~notes.
	\begin{enumerate}[(i)]
		\item If $\G$ is one of $\G^{\lambda}_{1}$,  $ \G_4^{\lambda_1,\lambda_2} $ ($\lambda_1^2+\lambda_2^2\neq0$), $\G_{5}$, $ \G_6^{\lambda} $, $ \G_{7} $, $ \G_8^{\lambda} $, $\G_{11} $, $\G_{12}^{\lambda}$,  $\G_{13}^{\lambda}$ ($\lambda\neq0$), $\G_{14}^{\lambda_1,\lambda_2}$ then the first derived ideal $\G^1: = [\G, \G] \equiv\g_{5,2}$. 
		\item If $\G$ is one of the Lie algebras  $ \G_{2} $, $ \G_{3} $, $ \G_4^{00}$ (i.e. $\G_4^{\lambda_1,\lambda_2}$ with $\lambda_1=\lambda_2=0$), $ \G_{9} $, $ \G_{10}^{\lambda} $,  $\G_{13}^0$ (i.e $\G_{13}^{\lambda}$ with ${\lambda=0}$), $\G_{15}$, $\G_{16}^{\lambda}$ then $\G^1 = [\G, \G]$($\subset \g_{5,2}$) is 4-dimensional .
	\end{enumerate}
\end{rem}

\section{THE MAIN RESULTS}\label{sec3}
In this section, we shall continue using the notations of Subsection \ref{ListLieAlgebras} and Table \ref{tab2}. Recall that each real Lie algebra $\G$ defines a unique connected and simply connected Lie group $G$ such that $\text{Lie}(G)=\G$. Therefore, we get a collection of sixteen families of connected and simply connected (real solvable) Lie groups corresponding to the indecomposable Lie algebras given in Table \ref{tab2}. For convenience, each such Lie group is also denoted by the same indices as its Lie algebra. For example, $G^{\lambda}_1, G_2$ are the connected and simply connected real solvable Lie groups corresponding to $\G^{\lambda}_1, \G_2$, respectively.

\subsection{The exponential mapping of considered  Lie groups}

From now on, by $G$ we always denote one of the following groups $G^{\lambda}_{1}$, $ G_{2} $, $ G_{3} $, $ G_4^{\lambda_1,\lambda_2} $, $G_{5}$, $ G_6^{\lambda} $, $ G_{7} $, $ G_8^{\lambda} $, $ G_{9} $, $ G_{10}^{\lambda} $, $ G_{11} $, $G_{12}^{\lambda}$,  $G_{13}^{\lambda}$, $G_{14}^{\lambda_1,\lambda_2}$, $G_{15}$, $G_{16}^{\lambda}$, i.e. the Lie algebra $\G: = \text{Lie}(G)$ is one of 
the Lie algebras in Table \ref{tab2}.
Let $U=x_1X_1+x_2X_2+x_3X_3+x_4X_4+x_5X_5+xX+yY \in \G$. We always identify $U$ with $(x_1,x_2,x_3,x_4,x_5,x,y) \in  \G $. We get the following~result.

\begin{prop}\label{Pro5}
	Notations being as above, we have
	\begin{enumerate}
		\item [(1)]	If  $G \in \{G^{\lambda}_{1}, G_{2}, G_{3}, G_4^{\lambda_1,\lambda_2}, G_{5}, G_6^{\lambda}, G_{7}, G_8^{\lambda}, G_{9}, G_{10}^{\lambda}, G_{11}, G_{12}^{\lambda}\}$ then it is exponential. 
		
		\item [(2)]	If  $G \in \{ G_{13}^{\lambda}, G_{14}^{\lambda_1,\lambda_2}, G_{15}, G_{16}^{\lambda}\}$ then it is not exponential. 
	\end{enumerate}
\end{prop}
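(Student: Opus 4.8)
The plan is to apply the criterion of Proposition \ref{pro exp}: since each $G$ is connected, simply connected and solvable, $G$ is exponential if and only if for every $U \in \G$ the operator $\ad_U$ has no nonzero purely imaginary eigenvalue. Because the nilradical $\g_{5,2}$ acts nilpotently, the eigenvalues of $\ad_U$ are governed by the action on the two-dimensional complement spanned by $X, Y$ together with the semisimple parts of $a_X$ and $a_Y$ on $\g_{5,2}$. Concretely, writing $U = V + xX + yY$ with $V \in \g_{5,2}$, I would observe that in a basis adapted to the flag $\g_{5,2} \subset \G$ the matrix of $\ad_U$ is block lower-triangular, so its characteristic polynomial is the product of the characteristic polynomial of the restriction $\ad_U|_{\g_{5,2}}$ (which, modulo nilpotents, equals that of $x\,a_X^T + y\,a_Y^T$) and the characteristic polynomial of the induced map on $\G/\g_{5,2}$; the latter is $t^2$ because $[X,Y] \in \g_{5,2}$ in every case, hence contributes only the eigenvalue $0$. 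Therefore the whole question reduces to the eigenvalues of the one-parameter family of matrices $M(x,y) := x\, a_X + y\, a_Y$ on $\g_{5,2}$ (transpose does not change eigenvalues).

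For part (1), I would go through the twelve listed families and check that $M(x,y)$ never has a nonzero imaginary eigenvalue. For $G_1^\lambda, G_2, G_3, G_4^{\lambda_1,\lambda_2}, G_6^\lambda, G_8^\lambda, G_{10}^\lambda, G_{12}^\lambda$ the matrices $a_X, a_Y$ are upper-triangular (diagonal plus a single strictly-upper entry such as $E_{12}$, $E_{23}$, $E_{25}$, $E_{35}$ or $E_{45}$), so $M(x,y)$ is triangular with real diagonal entries that are real linear combinations of the given diagonal tuples; hence all eigenvalues are real for every $(x,y) \in \R^2$. The same is true for $G_5, G_7, G_9, G_{11}$ after noting that although $a_X$ and $a_Y$ individually need not be simultaneously triangular, in each of these four cases one of $a_X, a_Y$ is diagonal and the perturbation of the other lies strictly above the diagonal in a basis that also triangularizes the diagonal one, so again $M(x,y)$ is triangularizable over $\R$ with real spectrum. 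In every instance I would simply exhibit the (real) eigenvalues of $M(x,y)$, e.g. as the entries of the diagonal of the triangular form, and invoke Proposition \ref{pro exp}(ii) $\Rightarrow$ (i).

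For part (2), the point is exactly the presence of the rotation block $S_{ab} = \begin{bmatrix} a & b \\ -b & a \end{bmatrix}$ in $a_X$ or $a_Y$. For $G_{13}^\lambda$ the matrix $a_Y$ contains the block $S_{01} = \begin{bmatrix} 0 & 1 \\ -1 & 0 \end{bmatrix}$, so taking $U = Y$ gives $\ad_Y$ an eigenvalue $\pm i \neq 0$; similarly $\ad_Y$ for $G_{14}^{\lambda_1,\lambda_2}$, $\ad_X$ for $G_{15}$, and $\ad_X$ for $G_{16}^\lambda$ each acquire a nonzero imaginary eigenvalue from the $S_{01}$-block (for $G_{16}^\lambda$, the $E_{25}$ perturbation is nilpotent and strictly off the rotation block, so it does not kill the $\pm i$ eigenvalue). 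By Proposition \ref{pro exp} the failure of condition (ii) for this single $U$ already forbids $G$ from being exponential.

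The only genuinely delicate point is the case-by-case verification in part (1) that for $G_5, G_7, G_9, G_{11}$ (and wherever an off-diagonal $E_{ij}$ appears) the family $M(x,y)$ really is triangularizable over $\R$ and has no hidden complex eigenvalue for some special ratio $x:y$; I would handle this by explicitly computing the $5\times 5$ characteristic polynomial of $M(x,y)$ in each case and checking it factors into real linear factors, which it does because in each such algebra the added $E_{ij}$ connects two basis vectors that are not interchanged by the diagonal part, so no $2\times 2$ irreducible block is ever created. This is routine but must be done for each of the twelve families to complete the argument.
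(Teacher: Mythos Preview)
Your strategy is exactly the paper's: both invoke Proposition~\ref{pro exp} and determine, for each family, whether $\ad_U$ can have a nonzero purely imaginary eigenvalue. The paper simply computes the full $7\times 7$ matrix $\ad_U$ in each case (via MAPLE) and lists its eigenvalues in Table~3, whereas you give the more structural argument: $\ad_U$ is block lower-triangular with respect to $\g_{5,2}\subset\G$, the quotient block is zero because $[X,Y]\in\g_{5,2}$, and the restriction to $\g_{5,2}$ has the same characteristic polynomial as $M(x,y)=x\,a_X+y\,a_Y$ since $\ad_V|_{\g_{5,2}}$ is strictly lower triangular while $a_X^T,a_Y^T$ are lower triangular. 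This is a genuine simplification over the brute-force table, and it makes part~(1) transparent: for every $\G_1$--$\G_{12}$ the off-diagonal perturbations $E_{ij}$ in Table~\ref{tab2} all have $i<j$, so $M(x,y)$ is upper triangular with real diagonal for all $(x,y)$. (In particular your separate treatment of $G_5,G_7,G_9,G_{11}$ is unnecessary; they are no more delicate than the others.)

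There is one concrete slip in part~(2). For $\G_{14}^{\lambda_1,\lambda_2}$ the block in $a_Y$ is $S_{\lambda_2 1}$, not $S_{01}$; taking $U=Y$ gives eigenvalues $\lambda_2\pm i$, which are \emph{not} purely imaginary when $\lambda_2>0$. You must instead take $U=xX+yY$ with $y\neq 0$ and, say, $(1+\lambda_1)x+\lambda_2 y=0$ (coming from the $(4,5)$-block $(1+\lambda_1)x\,I+y\,S_{\lambda_2 1}$), which always has a solution; this matches the eigenvalue $\lambda_1 x+\lambda_2 y+x\pm iy$ in the paper's Table~3. With that correction your argument goes through.
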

\begin{proof}
For each $\G$ under consideration, we used MAPLE to find all eigenvalues of $\ad_U$ for all $U \in \G$. Table \ref{tab3} shows the results whereby all $\ad_U$ have no purely imaginary eigenvalues except for four Lie algebras $\G_{13}^{\lambda}$, $\G_{14}^{\lambda_1,\lambda_2}$, $\G_{15}$, $\G_{16}^{\lambda}$, i.e. there exists $U\in \G$ for which $\ad_U$  have purely imaginary eigenvalues. Therefore, the proposition is proved immediately from Proposition~\ref{pro exp}.
\end{proof}

\begin{longtable}{p{.060\textwidth} p{.600\textwidth} p{.245\textwidth}}
	\caption*{Table 3}\label{tab3} \\
	\hline No. & $\ad_U$, $U=\sum_{i=1}^{5}x_iX_i+xX+yY (x_i, x, y \in \R)$  & Eigenvalues of $\ad_U$ \endfirsthead 
	\caption*{Table 3 (continued)} \\ 
	\hline No. & $\ad_U$, $U=\sum_{i=1}^{5}x_iX_i+xX+yY (x_i, x, y \in \R)$ &  Eigenvalues of $\ad_U$ \\ \hline \endhead \hline
	\endlastfoot \hline
	$\G_1^{\lambda}$ & 	$\begin{bmatrix}
	x & 0 & 0 & 0 & 0 & -x_1 & 0 \\
	0 & -x & 0 & 0 & 0 & x_2 & 0 \\
	0 & 0 & y & 0 & 0 & 0 & -x_3 \\
	-x_2 & x_1 & 0 & 0 &0 & -\lambda y & \lambda x\\
	-x_3 & 0 & x_1 & 0 & x+y & -x_5 & -x_5\\
	0 & 0 & 0 & 0 & 0 & 0 & 0\\
	0 & 0 & 0 & 0 & 0 & 0 & 0
	\end{bmatrix}$ & $0$ (multiplicity 3), $y$, $x$, $-x$, $x+y$\\
	&&\\
	$\G_2$ & $\begin{bmatrix}
	x & 0 & 0 & 0 & 0 & -x_1 & 0 \\
	0 & 0 & 0 & 0 & 0 & 0 & 0 \\
	0 & 0 & y & 0 & 0 & 0 & -x_3 \\
	-x_2 & x_1 & 0 & x &0 & -x_4 &0\\
	-x_3 & 0 & x_1 & 0 & x+y & -x_5 & -x_5\\
	0 & 0 & 0 & 0 & 0 & 0 & 0\\
	0 & 0 & 0 & 0 & 0 & 0 & 0
	\end{bmatrix}$ & $y, x+y, x$ (multiplicity 2), $0$ (multiplicity 3)\\
	&&\\
	$\G_3$ & $\begin{bmatrix}
	0 & 0 & 0 & 0 & 0 & 0 & 0 \\
	0 & x & 0 & 0 & 0 & -x_2 & 0 \\
	0 & 0 & y & 0 & 0 & 0 & -x_3 \\
	-x_2 & x_1 & 0 & x & 0 & -x_4 &0\\
	-x_3 & 0 & x_1 & 0 & y & 0 & -x_5\\
	0 & 0 & 0 & 0 & 0 & 0 & 0\\
	0 & 0 & 0 & 0 & 0 & 0 & 0
	\end{bmatrix}$ & $y$ (multiplicity 2), $x$ (multiplicity 2), $0$ (multiplicity 3)\\
	&&\\
	$\G_4^{\lambda_1,\lambda_2}$ & $\begin{bmatrix}
	x & 0 & 0 & 0 & 0 & -x_1 & 0 \\
	0 & y & 0 & 0 & 0 & 0 & -x_2  \\
	0 & 0 &p & 0 & 0 & -\lambda_1x_3 & -\lambda_2x_3 \\
	-x_2 & x_1 & 0 & x+y &0 & -x_4 &-x_4\\
	-x_3 & 0 & x_1 & 0 & q & -(1+\lambda_1)x_5 & -\lambda_2x_5\\
	0 & 0 & 0 & 0 & 0 & 0 & 0\\
	0 & 0 & 0 & 0 & 0 & 0 & 0
	\end{bmatrix}$ & $0$ (multiplicity 2), $y,$ $x,$ $y+x,$ $\lambda_1x+\lambda_2y,$ $\lambda_1x+\lambda_2y+x$\\
	& where $p= \lambda_1x+\lambda_2y  , q= (1+\lambda_1)x+\lambda_2y$&\\
	&&\\
	$\G_5$ & $\begin{bmatrix}
	y & 0 & 0 & 0 & 0 & 0 & -x_1 \\
	y & y & 0 & 0 & 0 & 0 & -x_1-x_2 \\
	0 & 0 & x & 0 & 0 &-x_3 & 0 \\
	-x_2 & x_1 & 0 & 2y &0 & 0 & -2x_4 \\
	-x_3 & 0 & x_1 & 0 & x+y & -x_5 & -x_5\\
	0 & 0 & 0 & 0 & 0 & 0 & 0\\
	0 & 0 & 0 & 0 & 0 & 0 & 0
	\end{bmatrix}$ & $2y, x, x+y, 0$ (multiplicity 2), $y$ (multiplicity 2)\\
	&&\\
	$\G_6^{\lambda}$ & $\begin{bmatrix}
	x & 0 & 0 & 0 & 0 & -x_1 & 0  \\
	y & x& 0 & 0 & 0 & -x_2 & -x_1 \\
	0 & 0 & \lambda x+y & 0 & 0 & -\lambda x_3 & -x_3 \\
	-x_2 & x_1 & 0 & 2x &0 & -2x_4 & 0 \\
	-x_3 & 0 &x_1 & 0 & p & -(1+\lambda)x_5 & -x_5\\
	0 & 0 & 0 & 0 & 0 & 0 & 0\\
	0 & 0 & 0 & 0 & 0 & 0 & 0
	\end{bmatrix}$ &$\lambda x+y, 2x, \lambda x+x+y, 0$ (multiplicity 2), $x$ (multiplicity 2) \\
	&where $p=(1+\lambda)x+y$ &\\
	$\G_7$ & $ \begin{bmatrix}
	y & 0 & 0 & 0 & 0 & 0 & -x_1  \\
	0 & x+y & 0 & 0 & 0 & -x_2 & -x_2 \\
	0 & 0 & x & 0 & 0 & -x_3 & 0\\
	-x_2 & x_1 & 0 & x+2y &0 & -x_4 & -2x_4 \\
	-x_3 & y & x_1 & 0 & x+y & -x_5 & -x_2-x_5\\
	0 & 0 & 0 & 0 & 0 & 0 & 0\\
	0 & 0 & 0 & 0 & 0 & 0 & 0
	\end{bmatrix}$ &  $y$,  $x$,  $x+2y$, $0$ \;(multiplicity 2),\; $x+y$ \;(multiplicity 2)\\
	&&\\
	$\G_8^{\lambda}$ & $\begin{bmatrix}
	x& 0 & 0 & 0 & 0 & -x_1 & 0  \\
	0 & p & 0 & 0 & 0 & -\lambda x_2-x_2 & -x_2\\
	0 & 0 & \lambda x+y & 0 & 0 & -\lambda x_3 & -x_3\\
	-x_2& x_1 & 0 & q&0 & -\lambda x_4-x_4 & -x_4 \\
	-x_3& y & x_1 & 0 & p & -\lambda x_5-x_5 & -x_2-x_5\\
	0 & 0 & 0 & 0 & 0 & 0 & 0\\
	0 & 0 & 0 & 0 & 0 & 0 & 0
	\end{bmatrix}$ &$x, \lambda x+y,  \lambda x+2x+y, 0$ (multiplicity 2), $ \lambda x+x+y$ (multiplicity 2)\\
	& where $p=\lambda x+x+y, q= \lambda x+2x+y$ & \\
	&&\\
	$\G_9$ & $\begin{bmatrix}
	0 & 0 & 0 & 0 & 0 & 0 & 0 \\
	0 & y & 0 & 0 & 0 & 0 & -x_2 \\
	0 & 0 & x& 0 & 0 & -x_3 & 0 \\
	-x_2 & x_1 & 0 & y &0 & 0 &-x_4\\
	-x_3 & 0 & x_1+y & 0 & x & -x_5 & -x_3\\
	0 & 0 & 0 & 0 & 0 & 0 & 0\\
	0 & 0 & 0 & 0 & 0 & 0 & 0
	\end{bmatrix}$ & $y$ (multiplicity 2), $x$ (multiplicity 2), $0$ (multiplicity 3)\\
	&&\\
	$\G_{10}^{\lambda}$ & $\begin{bmatrix}
	0 & 0 & 0 & 0 & 0 & 0 & 0 \\
	0 & x & 0 & 0 & 0 &-x_2 & 0 \\
	0 & 0 & \lambda x+y & 0 & 0 &-\lambda x_3 & -x_3 \\
	-x_2 & x_1 & 0 & x &0 & -x_4 &0\\
	-x_3 & 0 & x_1+y & 0 & \lambda x+y & -\lambda x_5 & -x_3-x_5\\
	0 & 0 & 0 & 0 & 0 & 0 & 0\\
	0 & 0 & 0 & 0 & 0 & 0 & 0
	\end{bmatrix}$ & $\lambda x+y$ (multiplicity 2), $x$ (multiplicity 2), $0$ (multiplicity 3)\\
	&&\\
	$\G_{11}$ & $\begin{bmatrix}
	y & 0 & 0 & 0 & 0 & 0 & -x_1 \\
	0 & x & 0 & 0 & 0 & -x_2 & 0 \\
	0 & y & x & 0 & 0 & -x_3 &-x_2\\
	-x_2& x_1 & 0 & x+y &0 & -x_4 & -x_4 \\
	-x_3 & 0 & x_1 & y & x+y & -x_5 & -x_4-x_5\\
	0 & 0 & 0 & 0 & 0 & 0 & 0\\
	0 & 0 & 0 & 0 & 0 & 0 & 0
	\end{bmatrix}$ & $y, x+y$ (multiplicity 2), $0$ (multiplicity 2), $x$ (multiplicity~2)\\
	&&\\
	$\G_{12}^{\lambda}$ & $\begin{bmatrix}
	x & 0 & 0 & 0 & 0 & -x_1 & 0  \\
	0 & \lambda x+y & 0 & 0 & 0 & -\lambda x_2 & -x_2 \\
	0 & y & \lambda x+y & 0 & 0 & -\lambda x_3&-x_2-x_3\\
	-x_2 & x_1 & 0 & p &0 & -(\lambda +1)x_4 & -x_4 \\
	-x_3 & 0 & x_1 & y & p & -(\lambda+1)x_5 & -x_4-x_5\\
	0 & 0 & 0 & 0 & 0 & 0 & 0\\
	0 & 0 & 0 & 0 & 0 & 0 & 0
	\end{bmatrix}$ &$x,  (\lambda+1)x+y$  (multiplicity 2), $0$ (multiplicity 2), $\lambda x+y$ (multiplicity 2)\\
	&where $p= \lambda x+x+y$& \\
	&&\\
	$\G_{13}^{\lambda}$ & $\begin{bmatrix}
	\lambda y & 0 & 0 & 0 & 0 & 0 &- \lambda x_1  \\
	0 & x & -y & 0 & 0 & -x_2 & x_3 \\
	0 & y & x & 0 & 0 & -x_3 & -x_2\\
	-x_2& x_1 & 0 & x+\lambda y &-y & -x_4 & -\lambda x_4+x_5 \\
	-x_3 & 0 & x_1 & y & x+\lambda y & -x_5 & -x_4-\lambda x_5\\
	0 & 0 & 0 & 0 & 0 & 0 & 0\\
	0 & 0 & 0 & 0 & 0 & 0 & 0
	\end{bmatrix}$ & $0$  (multiplicity 2), $\lambda y$,  $x\pm iy$, $x+\lambda y\pm iy$\\
	&&\\
	$\G_{14}^{\lambda_1,\lambda_2}$ & $  \begin{bmatrix}
	x & 0 & 0 & 0 & 0 &  -x_1 & 0  \\
	0 & p & -y & 0 & 0 & -\lambda_1x_2 & -\lambda_2x_2+x_3 \\
	0 & y & p & 0 & 0 & -\lambda_1x_3 & -x_2-\lambda_2x_3\\
	-x_2 & x & 0 & q & -y & -\lambda_1x_4-x_4 & -\lambda_2x_4+x_5 \\
	-x_3 & 0 & x_1 & y & q & -\lambda_1x_5-x_5 & -x_4-\lambda_2x_5\\
	0 & 0 & 0 & 0 & 0 & 0 & 0\\
	0 & 0 & 0 & 0 & 0 & 0 & 0
	\end{bmatrix}$ & $0$  (multiplicity 2),  $ x, \lambda_1 x+\lambda_2 y\pm iy, \lambda_1 x+\lambda_2 y+x\pm iy$\\
	&where $p=\lambda_1x+\lambda_2y ,q=\lambda_1x+x+\lambda_2y$ & \\
	&&\\
	$\G_{15}$ & $\begin{bmatrix}
	0 & 0 & 0 & 0 & 0 & 0 & 0  \\
	0 & y & -x & 0 & 0 & x_3 &- x_2 \\
	0 & x & y & 0 & 0 & -x_2 & -x_3\\
	-x_2 & x_1 & -y & y & -x & x_5 & -x_4+x_3\\
	-x_3 & y & x_1 & x & y & -x_4& -x_2-x_5\\
	0 & 0 & 0 & 0 & 0 & 0 & 0\\
	0 & 0 & 0 & 0 & 0 & 0 & 0
	\end{bmatrix}$ & $0$ (multiplicity 3), $ y\pm ix$ (multiplicity 2)\\
	&&\\
	$\G_{16}^{\lambda}$ & $\begin{bmatrix}
	0 & 0 & 0 & 0 & 0 & 0 & 0  \\
	0 & y & -x & 0 & 0 & x_3 & -x_2 \\
	0 & x & y & 0 & 0 & -x_2 & -x_3\\
	-x_2 & x_1 & -\lambda y & y & -x & x_5 & -x_4+\lambda x_3\\
	-x_3 & \lambda y+x & x_1 & x & y & -x_2-x_4&-\lambda x_2-x_5\\
	0 & 0 & 0 & 0 & 0 & 0 & 0\\
	0 & 0 & 0 & 0 & 0 & 0 & 0
	\end{bmatrix}$ & $y\pm ix$ (multiplicity 2), $0$ (multiplicity 3)\\
	\hline
\end{longtable}

As an immediate consequence of Proposition \ref{Pro5} and Corollary \ref{cor exp}, we have the following corollary.

\begin{cor}\label{cor6}
	For each Lie group $G$ from the set of Lie groups in (1) of Proposition \ref{Pro5}, $\Omega_F = \Omega_F(\G)$ for every $F \in \G^*$.
\end{cor}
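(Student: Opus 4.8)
The plan is to deduce the statement directly from the two results already established, with essentially no extra work. First I would invoke Proposition~\ref{Pro5}(1): it asserts that every group $G$ in the list $G^{\lambda}_{1}, G_{2}, G_{3}, G_4^{\lambda_1,\lambda_2}, G_{5}, G_6^{\lambda}, G_{7}, G_8^{\lambda}, G_{9}, G_{10}^{\lambda}, G_{11}, G_{12}^{\lambda}$ is exponential, i.e.\ its exponential mapping $\exp_G\colon\G\to G$ is a diffeomorphism; in particular $\exp_G$ is surjective.

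Next I would apply Corollary~\ref{cor exp} (equivalently, the second assertion of Proposition~\ref{2.1}): for an exponential connected and simply connected solvable Lie group one has $\Omega_F=\Omega_F(\G)$ for every $F\in\G^*$. Combining these two facts yields the claim for all twelve families of groups simultaneously; no case-by-case analysis is required, since exponentiality is the only hypothesis that enters, and it has already been checked uniformly.

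The only point carrying genuine content has in fact been dispatched in the proof of Proposition~\ref{Pro5}, namely the verification that these are precisely the exponential groups among the sixteen families. That verification rests on Saito's criterion (Proposition~\ref{pro exp}) together with the computation of the eigenvalues of $\ad_U$ recorded in Table~\ref{tab3}, which shows that $\ad_U$ has no nonzero purely imaginary eigenvalue for every $U\in\G$ exactly when $\G$ is not one of $\G_{13}^{\lambda},\G_{14}^{\lambda_1,\lambda_2},\G_{15},\G_{16}^{\lambda}$. Hence there is no real obstacle in the corollary itself; it is a formal consequence of Proposition~\ref{Pro5} and Corollary~\ref{cor exp}. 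It is worth remarking, for orientation, that for the four remaining (non-exponential) groups only the inclusion $\Omega_F\supset\Omega_F(\G)$ of Proposition~\ref{2.1} is available, so the equality genuinely cannot be extended to them without a separate argument; this is why the statement is restricted to the list in item~(1).
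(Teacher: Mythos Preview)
Your argument is correct and matches the paper's approach exactly: the paper presents this corollary as ``an immediate consequence of Proposition~\ref{Pro5} and Corollary~\ref{cor exp}'' with no further proof, which is precisely the combination you invoke. Your additional commentary on why the four non-exponential families are excluded is accurate but goes slightly beyond what the paper itself records.
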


\subsection{The geometry of maximal dimensional $K$-orbits of some considered Lie groups}\label{sec3.2}

Now we will set up and prove the main results of the paper in this subsection and the next one. As we have listed in Table \ref{tab2}, there are 16 families of considered Lie algebras having nilradical $\g_{5,2}$. As mentioned in the first section, we would like to study all listed Lie algebras by combining Kirillov's method of orbits and Connes' method. However, since the number of Lie algebras listed in Table \ref{tab2} is quite large and the computational volume is also very big, we will first focus on only one subclass of them. In fact, we will consider connected and simply connected (real solvable) Lie groups such that they are exponential and their Lie algebras belong to the list in Table \ref{tab2} with 4-dimensional derived ideals.
Namely, in view of Item (ii) of Remark \ref{remarklistLiealgebras} and Proposition \ref{Pro5}, we will describe the geometric picture of maximal dimensional K-orbits of Lie groups $G \in \{G_{2}, G_{3}, G_4^{00}, G_{9},  G_{10}^{\lambda}\}$ (where $\lambda \in \R$)
and study the topology of foliations formed by the generic $K$-orbits of each group from this~set. 

\begin{thm}[\textbf{The picture of maximal dimensional K-orbits}]\label{picture K-orbit}
	Assume that $G \in \{G_{2},\\G_{3}, G_4^{00}, G_{9},  G_{10}^{\lambda}\}$. 
	Denote by $(\alpha_1,\dots,\alpha_5,\alpha,\beta)$ the coordinate vector of $F \in \G^*$ ($\equiv \R^7$) with respect to the basis $(X_1^*, \dots, X_5^*, X^*, Y^*)$ which is dual one of the fixed basis $(X_1, \dots, X_5, \\X, Y)$ of $\G$. Then, the maximal dimension of $K$-orbits of $G$ is exactly six and the picture of 6-dimensional $K$-orbits in each case of $G$ is given by the following assertions.
	
	\begin{enumerate}
		\item For $G = G_2$: the $K$-orbit $\Omega_F$ is 6-dimensional if and only if the coordinates of $F$ satisfy the condition $\alpha_4 \neq 0 \neq \alpha_3^2+\alpha_5^2$. Namely, we~have
		\begin{enumerate}[(i)]
			\item 	If  $\alpha_3 \alpha_4 \neq 0 = \alpha_5$ and $\alpha_1, \alpha_2, \alpha, \beta \in \R$ then $\Omega_F$ is a part of the hyperplane $\{x^*_5 = 0\}$ as follows
			\[\hspace{1.3cm}\begin{array}{r} \Omega_F=\{(x^*_1, x^*_2, x^*_3, x^*_4, x^*_5, x^*, y^*) \in \G^*:   x^*_5 = 0, \alpha_3 x^*_3 > 0, \, \alpha_4 x^*_4 >0 \}.\end{array}\]
			\item If $\alpha_4 \alpha_5 \neq 0$ and $\alpha_1, \alpha_2, \alpha_3, \alpha, \beta \in \R$ then $\Omega_F$ is a part of a hypersurface of degree two as follows 
			\[\begin{array}{r}
			\Omega_F=\{(x^*_1, x^*_2, x^*_3, x^*_4, x^*_5, x^*, y^*) \in \G^*: x^*_2-\frac{x^*_3 x^*_4}{x^*_5} =\frac{\alpha_2\alpha_5-\alpha_3\alpha_4}{\alpha_5}, \alpha_4 x^*_4 > 0,  \alpha_5 x^*_5 >0 \}.\end{array}\]
		\end{enumerate}
		
		\item For $G =  G_4^{00}$: the $K$-orbit $\Omega_F$ is 6-dimensional if and only if the coordinates of $F$ satisfy the condition $\alpha_5 \neq 0 \neq \alpha_2^2+\alpha_4^2$. Namely, we~have	
		\begin{enumerate}[(i)]
			\item 	If  $\alpha_2 \alpha_5 \neq 0 = \alpha_4$ and $\alpha_1, \alpha_3, \alpha, \beta \in \R$ then $\Omega_F$ is a part of the hyperplane $\{x^*_4 = 0\}$ as follows
			\[\hspace{1.3cm}\begin{array}{r} \Omega_F=\{(x^*_1, x^*_2, x^*_3, x^*_4, x^*_5, x^*, y^*) \in \G^*:   x^*_4 = 0, \, \alpha_2 x^*_2 > 0, \, \alpha_5 x^*_5 >0 \}.\end{array}\]
			\item If $\alpha_4 \alpha_5 \neq 0$ and $\alpha_1, \alpha_2, \alpha_3, \alpha, \beta \in \R$ then $\Omega_F$ is a part of a hypersurface of degree two as follows
			\[\begin{array}{r}
			\Omega_F=\{(x^*_1, x^*_2, x^*_3, x^*_4, x^*_5, x^*, y^*) \in \G^*: x^*_3-\frac{x^*_2 x^*_5}{x^*_4} = \frac{\alpha_3\alpha_4-\alpha_2\alpha_5}{\alpha_4}, \alpha_4 x^*_4 > 0, \, \alpha_5 x^*_5 >0 \}.\end{array}\]
		\end{enumerate}
		
		\item For $G = G_{3}, \, G = G_{9}, \, G = G_{10}^{\lambda}$: the $K$-orbit $\Omega_F$ is 6-dimensional if and only if the coordinates of $F$ satisfy the condition either $\alpha_4 = 0 \neq \alpha_2\alpha_5$ or $\alpha_4 \neq 0 \neq \alpha_3^2+\alpha_5^2$. Namely, we have
		
		\begin{enumerate}[(i)]
			\item 	If  $\alpha_4 = 0 \neq \alpha_2\alpha_5$ and $\alpha_1, \alpha_3, \alpha, \beta \in \R$ then $\Omega_F$ is a part of the hyperplane $\{x^*_4 = 0\}$ as follows
			\[\hspace{1.3cm}\begin{array}{r} \Omega_F=\{(x^*_1, x^*_2, x^*_3, x^*_4, x^*_5, x^*, y^*) \in \G^*:   x^*_4 = 0, \, \alpha_2 x^*_2 > 0, \, \alpha_5 x^*_5 >0 \}.\end{array}\]
			\item 	If  $\alpha_5 = 0 \neq \alpha_3\alpha_4$ and $\alpha_1, \alpha_2, \alpha, \beta \in \R$ then $\Omega_F$ is a part of the hyperplane $\{x^*_5 = 0\}$ as follows
			\[\hspace{1.3cm}\begin{array}{r} \Omega_F=\{(x^*_1, x^*_2, x^*_3, x^*_4, x^*_5, x^*, y^*) \in \G^*:   x^*_5 = 0, \, \alpha_3 x^*_2 > 0, \, \alpha_4 x^*_4 >0 \}.\end{array}\]
			\item If $\alpha_4 \alpha_5 \neq 0$ and $\alpha_1, \alpha_2, \alpha_3, \alpha, \beta \in \R$ then $\Omega_F$ is a part of a (transcendental or algebraic) hypersurface which is given in each case as follows
			\begin{itemize}
				\item When $G = G_3$, $\Omega_F$ is a part of the following algebraic hypersurface of degree~two
				\[\begin{array}{r}
				\Omega_F=\{(x^*_1, x^*_2, x^*_3, x^*_4, x^*_5, x^*, y^*) \in \G^*: \frac{x^*_2}{x^*_4} - \frac{x^*_3}{x^*_5}=\frac{\alpha_2}{\alpha_4} - \frac{\alpha_3}{\alpha_5}, \\\alpha_4 x^*_4 > 0,  \alpha_5 x^*_5 >0 \}.\end{array}\]
				\item When $G = G_9$, $\Omega_F$ is a part of the following transcendental hypersurface
				\[\begin{array}{r}
				\Omega_F=\{(x^*_1, x^*_2, x^*_3, x^*_4, x^*_5, x^*, y^*) \in \G^*: \frac{x^*_2}{x^*_4} - \frac{x^*_3}{x^*_5} + \ln\vert x^*_4 \vert   = \frac{\alpha_2}{\alpha_4} - \frac{\alpha_3}{\alpha_5} + \ln\vert \alpha_4 \vert,\\ \alpha_4 x^*_4 > 0, \, \alpha_5 x^*_5 >0 \}.\end{array}\]
				\item When $G = G_{10}^{\lambda}$, $\Omega_F$ is a part of the following transcendental hypersurface
				\[\begin{array}{r}
				\Omega_F=\{(x^*_1, x^*_2, x^*_3, x^*_4, x^*_5, x^*, y^*) \in \G^*: \frac{x^*_2}{x^*_4} - \frac{x^*_3}{x^*_5} + \ln \frac{\vert x^*_5 \vert}{{\vert x^*_4 \vert}^{\lambda}}  =  \frac{\alpha_2}{\alpha_4} - \frac{\alpha_3}{\alpha_5} + \ln \frac{\vert \alpha_5 \vert}{{\vert \alpha_4 \vert}^{\lambda}},\\ \alpha_4 x^*_4 > 0, \, \alpha_5 x^*_5 >0 \}.\end{array}\]
			\end{itemize}
		\end{enumerate}
	\end{enumerate}
\end{thm}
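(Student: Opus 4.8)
The argument rests entirely on Corollary~\ref{cor6}: each of $G_2$, $G_3$, $G_4^{00}$, $G_9$, $G_{10}^{\lambda}$ is exponential by Proposition~\ref{Pro5}(1), so $\Omega_F=\Omega_F(\G)=\{F_U:U\in\G\}$, and $F_U$ is simply $\bigl(\exp\ad_U\bigr)^{T}$ applied to the coordinate vector $(\alpha_1,\dots,\alpha_5,\alpha,\beta)$. The plan is therefore, for each of the five Lie algebras in turn: (1) take the matrix $\ad_U$ from Table~\ref{tab3}, observe that it is block triangular with vanishing $(X,Y)$-block, compute $\exp\ad_U$ in closed form, and write out the seven coordinates of $F_U$ as explicit elementary functions of $(x_1,\dots,x_5,x,y)$; (2) decide for which $F$ one has $\dim\Omega_F=6$; (3) from the explicit formula for $F_U$ extract the defining relations of $\Omega_F$ and check that $\{F_U:U\in\G\}$ is exactly the set described in the statement.

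For step~(2) I would use Proposition~\ref{Prop2.4}(ii), $\dim\Omega_F=\Rank B_F$, where $B_F=\bigl(\langle F,[X_i,X_j]\rangle\bigr)$ is the $7\times7$ skew matrix read off the brackets in Table~\ref{tab2}. Since $B_F$ is skew of odd size, $\Rank B_F$ is even and $\le 6$; one then writes down a suitable $6\times6$ minor (a Pfaffian of a $6\times6$ principal submatrix) and computes that, up to sign and a power of $\alpha_4$ (resp.\ $\alpha_5$), it equals $\alpha_3^2+\alpha_5^2$ for $G_2$, $\alpha_2^2+\alpha_4^2$ for $G_4^{00}$, and the corresponding quadratic for $G_3,G_9,G_{10}^{\lambda}$; a one-line case check shows that on the vanishing locus the rank falls to $4$. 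This establishes simultaneously that the maximal orbit dimension is $6$ and that the genericity conditions are the stated ones.

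For step~(3) the structural facts to exploit are: the $X_4$- and $X_5$-axes of $\g_{5,2}$ are central in the nilradical and are eigenvectors of every $\ad_U$, so the coordinates $x_4^*$ and $x_5^*$ are only rescaled by positive factors along the orbit --- they keep the signs of $\alpha_4,\alpha_5$ and never vanish, which is the source of the open conditions $\alpha_4x_4^*>0$, $\alpha_5x_5^*>0$ --- while, the $(X,Y)$-block of $\ad_U$ being zero and the genericity condition being in force, $x^*$ and $y^*$ can be pushed to arbitrary real values independently. The one remaining relation I would obtain by producing the first integral $\phi_{\G}$ of the statement --- $x_2^*-x_3^*x_4^*/x_5^*$ for $G_2$, $x_3^*-x_2^*x_5^*/x_4^*$ for $G_4^{00}$, and $x_2^*/x_4^*-x_3^*/x_5^*$ possibly corrected by a logarithmic term for $G_3,G_9,G_{10}^{\lambda}$ --- and checking that $d\phi_{\G}$ annihilates every generator $(\ad_U)^{T}F$ of the tangent space; equivalently, substituting the explicit $F_U$ into $\phi_{\G}$ and verifying independence of $U$. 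Then $\Omega_F$ is contained in the connected $6$-dimensional set $S=\{\phi_{\G}=\phi_{\G}(F)\}\cap\{\alpha_4x_4^*>0,\ \alpha_5x_5^*>0\}$, hence open in it for dimension reasons; the reverse inclusion $S\subseteq\Omega_F$ I would get by solving the parametrization $U\mapsto F_U$ explicitly --- first choose $x$ and $y$ to realise a prescribed pair $(x_4^*,x_5^*)$, then $x_1,x_2,x_3$ to realise prescribed $x_1^*,x_3^*,x^*,y^*$, the last coordinate being then forced by $\phi_{\G}$.

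The step I expect to be genuinely delicate is the appearance of the transcendental (logarithmic) relations for $G_9$ and $G_{10}^{\lambda}$. Here the point is that the $E_{35}$-perturbation of $a_Y$ makes the flow of $Y$ translate the ratio $x_3^*/x_5^*$ rather than merely rescale it, and the amount of the translation equals the logarithm of $x_4^*$ (for $G_{10}^{\lambda}$, of $|x_5^*|/|x_4^*|^{\lambda}$, because the two rescaling rates now differ by $\lambda$); getting the coefficient of the $\ln$ and the $\lambda$-power exactly right, and then confirming that $U\mapsto F_U$ is onto the whole transcendental hypersurface and not merely a proper piece of it, is where the care is needed. By contrast, for $G_3$ and for the hyperplane pieces of every case the $Y$-flow only rescales, so the relation stays rational and the bookkeeping is routine; the remaining chore there is organisational, namely that $G_3,G_9,G_{10}^{\lambda}$ each carry three qualitatively distinct families of generic orbits ($\alpha_4=0$, $\alpha_5=0$, $\alpha_4\alpha_5\neq0$), all of which must be run through the scheme above, with a closing check that no generic orbit has been overlooked.
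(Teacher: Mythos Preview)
Your proposal is correct and follows essentially the same route as the paper: use Proposition~\ref{Prop2.4} with the explicit matrix of $B_F$ to determine the rank-$6$ locus, invoke Corollary~\ref{cor6} to identify $\Omega_F$ with $\{F_U:U\in\G\}$, compute $\exp(\ad_U)$ in closed form, write out the seven coordinates of $F_U$, and then read off the invariant relation together with the sign constraints on $x_4^*,x_5^*$. The paper carries this out by tabulating $B_F$ and $\exp(\ad_U)$ for each algebra and then, case by case, observing which coordinates range freely and verifying by substitution that the stated relation holds; your framing in terms of a first integral $\phi_{\G}$ and an explicit inverse parametrization for the reverse inclusion is a slightly more systematic packaging of exactly the same computation.
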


\begin{proof}
First of all, we prove that the maximal dimension of $K$-orbits of each $G$ from the set $\{G_{2}, G_{3}, G_4^{00}, G_{9},  G_{10}^{\lambda}\}$ is exactly six. 

Assume that $F  (\alpha_1,\dots,\alpha_5,\alpha,\beta) \in \G^*$. Upon direct computation by using the definition of $B_F$ in Equality (\ref{eq-BF}), we get matrix of $B_{F}$ in Table 4.

It can be verified that 
the maximal dimension of $K$-orbits of all groups from the set $\{G_{2}, G_{3},\\ G_4^{00}, G_{9},  G_{10}^{\lambda}\}$ is exactly six and condition of $\Rank(B_{F})=6$ as Table \ref{tab4}.

Next, we will describe all maximal dimensional $K$-orbits of $G \in \{ G_{2},G_{3}, G_4^{00}, G_{9}, G_{10}^{\lambda}\}$. From Corollary \ref{cor6}, it follows that $\Omega_F=\Omega_F(\G)$. Recall that
\[\Omega_F(\G)=\{F_U/U\in\G\} \subset \R^7,\]
where $F_U =\sum_{i=1}^{5}x^*_iX^*_i+x^*X^*+y^*Y^*=(x^*_1 , x^*_2, x^*_3, x^*_4, x^*_5, x^*,y^*)\in \G^*$ is the linear form on the Lie algebra $\G$ of $ G $ defined by 
$$\left\langle F_U, T\right\rangle =\left\langle F,\exp(\ad_U) T\right\rangle, \forall T \in \G, \forall U \in \G.$$ 
To determine $F_U$ for all 
$U =\sum_{i=1}^{5}x_iX_i+xX+yY=(x_1,x_2,x_3,x_4,x_5,x,y)\in \G$,
we have to determine $\exp(\ad_U)$ in the basic $(X_1, X_2, X_3, X_4, X_5, X, Y)$. We use MAPLE to obtain $\exp(\ad_U)$ of $ G_{2},$ $ G_3,$ $G^{00}_4,$ $G_9,$ $G^{\lambda}_{10}$ in Table \ref{tab5}. 
Now, each case is proved in detail below. 
	
		\begin{enumerate}
		\item For $G=G_2$, we use the result about of $\exp(\ad_U)$ of $G_2$ in Table \ref{tab5} and by a direct computation, we get
		\[\begin{cases}
		x^*_1=\alpha_1 e^{x}-\alpha_4 x_2e^{x} + \alpha_5\frac{(e^{x}-e^{x+y})x_3}{y}   \\
		x^*_2=\alpha_2 + \alpha_4\frac{x_1(e^{x}-1)}{x}   \\
		x^*_3=  \alpha_3 e^{y}+\alpha_5\frac{(e^{x+y}-e^{y})x_1}{x}   \\
		x^*_4= \alpha_4e^x\\
		x^*_5= \alpha_5 e^{x+y}\\
		x^*=  \alpha_1\frac{x_1(1-e^{x})}{x}  +\alpha_4A_2 + \alpha_5 B_2+ \alpha \\
		y^*= \alpha_3\frac{x_3(1-e^{y})}{y} +\alpha_5 C_2+\beta.
		\end{cases}\] 
		where 
		\[ \begin{cases}
		A_2=\frac{e^{x}[x_1x_2(x-1)-x_4x]+x_1x_2+x_4x}{x^2
		}\\
		B_2=\frac{e^{x+y}x(x_1x_3-x_5y)-e^{x}x_1x_3(x+y)-x_1x_3y-x_5xy}{xy(x+y)}\\
		C_2=\frac{e^{y}x_1x_3(x+y)-e^{x+y}y(x_1x_3+x_5x)-x_1x_3x+x_5xy}{xy(x+y)}.
		\end{cases}\]
		
		To describe all maximal dimensional $K$-orbits of $G = G_2$, we only consider 
		$$F(\alpha_1,\dots,\alpha_5,\alpha,\beta) \in \G^*_2$$
		with $\alpha_3 \alpha_4 \neq 0 = \alpha_5$ or $\alpha_4 \alpha_5 \neq 0$, the remaining parameters are arbitrary. 
		
		\begin{itemize}
			\item 	The first case, $\alpha_3 \alpha_4 \neq 0 = \alpha_5$. It is obvious that each of the coordinates $x^*_1, x^*_2, x^*, y^*$ runs over line $\R$, while $x^*_5 \equiv 0$ and $x^*_3, x^*_4 \in \R; \, \alpha_3 x^*_3 > 0, \, \alpha_4 x^*_4 >0$. For this reason, we get	 	
			$\Omega_F$ is a part of hyperplane as follows:
			\[\begin{array}{r} \Omega_F=\{(x^*_1, x^*_2, x^*_3, x^*_4, x^*_5, x^*, y^*) \in \G^*:   x^*_5 = 0, \, \alpha_3 x^*_3 > 0, \, \alpha_4 x^*_4 >0 \}.\end{array}\]
			\item The second case, $\alpha_4 \alpha_5 \neq 0$. Clearly, each of the coordinates $x^*_1, x^*, y^*$ runs over line $\R$, while $x^*_4, x^*_5 \in \R; \, \alpha_4 x^*_4 > 0, \, \alpha_5 x^*_5 >0$. By an easy computation it follows that the coordinates $x^*_2, x^*_3, x^*_4, x^*_5$ are satisfy the following equation
			\[x^*_2-\frac{x^*_3 x^*_4}{x^*_5} =\frac{\alpha_2\alpha_5-\alpha_3\alpha_4}{\alpha_5}.\]
			Thus we get	 	
			$\Omega_F$ is a part of hypersurface of degree two as follows:
			\[\begin{array}{r}
			\Omega_F=\{(x^*_1, x^*_2, x^*_3, x^*_4, x^*_5, x^*, y^*) \in \G^*: x^*_2-\frac{x^*_3 x^*_4}{x^*_5} =\frac{\alpha_2\alpha_5-\alpha_3\alpha_4}{\alpha_5}, \alpha_4 x^*_4 > 0, \alpha_5 x^*_5 >0 \}.\end{array}\] 
		\end{itemize}
		This completes the proof for $G = G_2$. 
		
		\item For $G=G_{3}$: we use the result about of $\exp(\ad_U)$ of $G_3$ in Table \ref{tab5} and by a direct computation, we get
		\[\begin{cases}
		x^*_1= \alpha_1 +\alpha_4\frac{x_2(1-e^{x})}{x}+\alpha_5\frac{x_3 (1-e^{y})}{y}    \\
		x^*_2= \alpha_2 e^{x}  +\alpha_4 x_1e^{x}  \\
		x^*_3= \alpha_3 e^{y} +\alpha_5 x_1e^{y}  \\
		x^*_4= \alpha_4e^x\\
		x^*_5= \alpha_5 e^y\\
		x^*= \alpha_2\frac{x_2(1-e^{x})}{x} +\alpha_4 A_3+\alpha \\
		y^*= \alpha_3 \frac{x_3(1-e^{y})}{y}+\alpha_5 B_3+\beta.
		\end{cases}\] 
		where 
		\[ \begin{cases}
		A_3=-\frac{e^{x}(x_1x_2x-x_1x_2+x_4x)+x_1x_2-x_4x}{x^2}\\
		B_3=-\frac{e^{y}(x_1x_3y -x_1x_3+x_5y)+x_1x_3-x_5y}{y^2}.
		\end{cases}\]
		
		To describe all maximal dimensional $K$-orbits of $G_3$, we only consider $F(\alpha_1,\dots,\alpha_5,\alpha,\beta)\\ \in \G^*_3$ with $\alpha_2 \alpha_5 \neq 0 = \alpha_4$ or $\alpha_3 \alpha_4 \neq 0 = \alpha_5$ or $\alpha_4 \alpha_5 \neq 0$, the remaining ones are arbitrary. 
		
		\begin{itemize}
			\item The first case, $\alpha_4 = 0 \neq \alpha_2\alpha_5$. It is obvious that each of the coordinates $x^*_1, x^*_3, x^*, y^*$ runs over line $\R$, while $x^*_4 \equiv 0$ and $x^*_2, x^*_5 \in \R; \, \alpha_2 x^*_2 > 0, \, \alpha_5 x^*_5 >0$. Therefore we get	$\Omega_F$ is a part of hyperplane as follows:
			\[\begin{array}{r} \Omega_F=\{(x^*_1, x^*_2, x^*_3, x^*_4, x^*_5, x^*, y^*) \in \G^*:   x^*_4 = 0, \, \alpha_2 x^*_2 > 0, \, \alpha_5 x^*_5 >0 \}.\end{array}\]
			\item 	The second case, $\alpha_3 \alpha_4 \neq 0 = \alpha_5$. It is easily seen that each of the coordinates $x^*_1, x^*_2, x^*, y^*$ runs over line $\R$, while $x^*_5 \equiv 0$ and $x^*_3, x^*_4 \in \R; \, \alpha_3 x^*_3 > 0, \, \alpha_4 x^*_4 >0$. Therefore, we get	
			$\Omega_F$ is a part of hyperplane as~follows:
			\[\begin{array}{r} \Omega_F=\{(x^*_1, x^*_2, x^*_3, x^*_4, x^*_5, x^*, y^*) \in \G^*:   x^*_5 = 0,\, \alpha_3 x^*_3 > 0, \, \alpha_4 x^*_4 >0 \}.\end{array}\]
			\item The third case, $\alpha_4 \alpha_5 \neq 0$. Clearly, each of the coordinates $x^*_1$, $x^*$, $y^*$ runs over line $\R$, while $x^*_4, x^*_5 \in \R; \, \alpha_4 x^*_4 > 0, \, \alpha_5 x^*_5 >0$. By an easy computation it follows that the coordinates $x^*_2, x^*_3, x^*_4, x^*_5$ are satisfy the following equation
			\[\frac{x^*_2}{x^*_4}-\frac{x^*_3 }{x^*_5} =\frac{\alpha_2}{\alpha_4}-\frac{\alpha_3}{\alpha_5}.\]
			Hence we get	 	
			$\Omega_F$ is a part of algebraic hypersurface of degree two as~follows:
			\[\begin{array}{r}
			\Omega_F=\{(x^*_1, x^*_2, x^*_3, x^*_4, x^*_5, x^*, y^*) \in \G^*: \frac{x^*_2}{x^*_4}-\frac{x^*_3 }{x^*_5} =\frac{\alpha_2}{\alpha_4}-\frac{\alpha_3}{\alpha_5}, \alpha_4 x^*_4 > 0, \, \alpha_5 x^*_5 >0 \}.\end{array}\] 
		\end{itemize}
		This completes the proof for case $G = G_3$.
		
		\item For $G=G^{00}_4$: we use the result about of $\exp(\ad_U)$ of $G^{00}_4$ in Table \ref{tab5} and by a direct computation, we get
		\[\begin{cases}
		x^*_1=\alpha_1 e^{x}+\alpha_4 \frac{x_2(e^{x}-e^{x+y})}{y}- \alpha_5 x_3e^x \\
		x^*_2= \alpha_2 e^{y} +\alpha_4 \frac{x_1(e^{x+y}-e^{y})}{x} \\
		x^*_3= \alpha_3  + \alpha_5 \frac{x_1(e^x-1)}{x} \\
		x^*_4= \alpha_4 e^{x+y} \\
		x^*_5= \alpha_5 e^{x}\\
		x^*=\alpha_1\frac{x_1(1-e^{x})}{x}+\alpha_4 A_4+\alpha_5 B_4+\alpha  \\
		y^* = \alpha_2 \frac{x_2(1-e^{y})}{y}+\alpha_4 C_4+\beta
		\end{cases}\] 
		where 
		\[ \begin{cases}
		A_4=\frac{e^{x+y}x(x_1x_2-x_4y)-e^{x}x_1x_2(x+y)+y(x_1x_2+x_4x)}{xy(x+y)}\\
		B_4=\frac{e^xx_1x_3x-e^xx_1x_3-e^xx_5x+x_1x_3+x_5x}{x^2} \\
		C_4=\frac{e^{y}x_1x_2(x+y)-e^{x+y}y(x_1x_2+x_4x)+x(yx_4-x_1x_2)}{xy(x+y)}.	 
		\end{cases}\]
		
		To describe all maximal dimensional $K$-orbits of $G^{00}_4$, we only consider $F(\alpha_1,\dots,\alpha_5,\alpha,\beta)\\ \in {\G^{00}_4}^*$ with $\alpha_2\alpha_5 \neq 0 =\alpha_4$ or $\alpha_4\alpha_5\neq0$, the remaining ones are arbitrary. 
		
		\begin{itemize}
			\item 	The first case, $ \alpha_2\alpha_5 \neq0=\alpha_4$. It is obvious that each of the coordinates $x^*_1, x^*_3, x^*, y^*$ runs over line $\R$, while $x^*_4 \equiv 0$ and $x^*_2, x^*_5 \in \R; \, \alpha_2 x^*_2 > 0, \, \alpha_5 x^*_5 >0$. Hence we get	 	
			$\Omega_F$ is a part of hyperplane as follows:
			\[\begin{array}{r} \Omega_F=\{(x^*_1, x^*_2, x^*_3, x^*_4, x^*_5, x^*, y^*) \in \G^*:   x^*_4 = 0,  \alpha_2 x^*_2 > 0, \, \alpha_5 x^*_5 >0 \}.\end{array}\]
			\item The second case, $\alpha_4 \alpha_5 \neq 0$. Clearly, each of the coordinates $x^*_1, x^*, y^*$ runs over line $\R$, while $x^*_4, x^*_5 \in \R; \, \alpha_4 x^*_4 > 0, \, \alpha_5 x^*_5 >0$. By an easy computation it follows that the coordinates $x^*_2, x^*_3, x^*_4$ and $x^*_5$ are satisfy the following equation
			\[x^*_3-\frac{x^*_2 x^*_5}{x^*_4} = \frac{\alpha_3\alpha_4-\alpha_2\alpha_5}{\alpha_4}.\]
			Therefore we get	 	
			$\Omega_F$ is a part of hypersurface of degree two as follows:
			\[\begin{array}{r}
			\Omega_F=\{(x^*_1, x^*_2, x^*_3, x^*_4, x^*_5, x^*, y^*) \in \G^*:x^*_3-\frac{x^*_2 x^*_5}{x^*_4} = \frac{\alpha_3\alpha_4-\alpha_2\alpha_5}{\alpha_4}, \alpha_4 x^*_4 > 0, \, \alpha_5 x^*_5 >0 \}.\end{array}\] 
		\end{itemize}
		This completes the proof for case $G = G^{00}_4$.
		
		\item For $G=G_{9}$: we use the result about of $\exp(\ad_U)$ of $G_9$ in Table \ref{tab5} and by a direct computation, we get
		\[\begin{cases}
		x^*_1= \alpha_1 + \alpha_4 \frac{x_2(1-e^{y})}{y}+  \alpha_5 \frac{x_3(1-e^{x})}{x} \\
		x^*_2= \alpha_2  e^{ y} + \alpha_4 x_1 e^{y} \\
		x^*_3= \alpha_3 e^{x} + \alpha_5(x_1+y)e^{x} \\
		x^*_4= \alpha_4e^{y}  \\
		x^*_5= \alpha_5 e^{x} \\
		x^*=  \alpha_3 \frac{ x_3(1-e^{x})}{x}+\alpha_5A_9+\alpha\\
		y^* =  \alpha_2\frac{x_2(1-e^{ y})}{y} +\alpha_4 B_9+\alpha_5\frac{x_3(1-e^{x})}{x}
		+\beta.
		\end{cases}\] 
		where 
		\[ \hspace{1cm}\begin{cases}
		A_9=-\dfrac{e^{x}(x_1x_3x+x_3xy-x_1x_3-x_3y+x_5x)+x_1x_3+x_3y-x_5x}{x^2}\\
		B_9=-\dfrac{e^{y}(x_1x_2y-x_1x_2+x_4y)+x_1x_2-x_4y}{y^2}.
		\end{cases}\]
		
		To describe all maximal dimensional $K$-orbits of $G_9$, we only consider $F(\alpha_1,\dots,\alpha_5,\alpha,\beta)\\ \in \G^*_9$ with $\alpha_4 = 0 \neq \alpha_2\alpha_5$ or $\alpha_3 \alpha_4 \neq 0 = \alpha_5$ or $\alpha_4 \alpha_5 \neq 0$, the remaining ones are arbitrary. 
		
		\begin{itemize}
			\item The first case, $\alpha_4 = 0 \neq \alpha_2\alpha_5$. It is obvious that each of the coordinates $x^*_1, x^*_3, x^*, y^*$ runs over line $\R$, while $x^*_4 \equiv 0$ and $x^*_2, x^*_5 \in \R; \, \alpha_2 x^*_2 > 0, \, \alpha_5 x^*_5 >0$. Therefore we get	 	
			$\Omega_F$ is a part of hyperplane as follows:
			\[\begin{array}{r} \Omega_F=\{(x^*_1, x^*_2, x^*_3, x^*_4, x^*_5, x^*, y^*) \in \G^*:   x^*_4 = 0, \alpha_2 x^*_2 > 0, \, \alpha_5 x^*_5 >0 \}.\end{array}\]
			\item 	The second case, $\alpha_3 \alpha_4 \neq 0 = \alpha_5$. Obviously, each of the coordinates $x^*_1, x^*_2, x^*, y^*$ runs over line $\R$, while $x^*_5 \equiv 0$ and $x^*_3, x^*_4 \in \R; \, \alpha_3 x^*_3 > 0, \, \alpha_4 x^*_4 >0$. For this reason, we get	 	
			$\Omega_F$ is a part of hyperplane as follows:
			\[\begin{array}{r} \Omega_F=\{(x^*_1, x^*_2, x^*_3, x^*_4, x^*_5, x^*, y^*) \in \G^*:   x^*_5 = 0, \alpha_3 x^*_3 > 0, \, \alpha_4 x^*_4 >0 \}.\end{array}\]
			\item The third case, $\alpha_4 \alpha_5 \neq 0$. Clearly, each of the coordinates $x^*_1$, $x^*$, $y^*$ runs over line $\R$, while $x^*_4, x^*_5 \in \R; \, \alpha_4 x^*_4 > 0, \, \alpha_5 x^*_5 >0$. By an easy computation it follows that the coordinates $x^*_2, x^*_3, x^*_4, x^*_5$ are satisfy the following equation
			\[\frac{x^*_2}{x^*_4} - \frac{x^*_3}{x^*_5} + \ln\vert x^*_4 \vert =  \frac{\alpha_2}{\alpha_4} - \frac{\alpha_3}{\alpha_5} + \ln\vert \alpha_4 \vert.\]
			From this we get	 	
			$\Omega_F$ is a part of transcendental hypersurface as follows:
			\[\begin{array}{r}
			\Omega_F=\{(x^*_1, x^*_2, x^*_3, x^*_4, x^*_5, x^*, y^*) \in \G^*: \frac{x^*_2}{x^*_4} - \frac{x^*_3}{x^*_5} + \ln\vert x^*_4 \vert =  \frac{\alpha_2}{\alpha_4} - \frac{\alpha_3}{\alpha_5} + \ln\vert \alpha_4 \vert,\\ \alpha_4 x^*_4 > 0, \, \alpha_5 x^*_5 >0 \}.\end{array}\] 
		\end{itemize}
		This completes the proof for case $G = G_9$.
		\item For $G=G^{\lambda}_{10}$: We use the result about of $\exp(\ad_U)$ of $G^{\lambda}_{10}$ in Table \ref{tab5}  and by a direct computation, we get
		\[\begin{cases}
		x^*_1= \alpha_1 + \alpha_4 \frac{x_2(1-e^{x})}{x}+ \alpha_5 D_{10} \\
		x^*_2= \alpha_2  e^{ x}+ \alpha_4 x_1 e^{x} \\
		x^*_3=\alpha_3e^{\lambda x+y}+\alpha_5(x_1+y)e^{\lambda x+y} \\
		x^*_4= \alpha_4 e^{x}  \\
		x^*_5= \alpha_5 e^{\lambda x+y} \\
		x^*=  \alpha_2\frac{ x_2(1-e^{x})}{x}+\alpha_3 \lambda D_{10}+\alpha_4 A_{10}+\alpha_5 B_{10}+\alpha\\
		y^* =  \alpha_3D_{10} +\alpha_5 C_{10}+\beta.
		\end{cases}\] 
		where 
		\[	\hspace{1cm}	 \begin{cases}
		\begin{array}{l}
		A_{10}=-\frac{1}{x^2}[e^{x}(x_1x_2x-x_1x_2+x_4x)+x_1x_2-x_4x]\end{array}\\
		\begin{array}{rr}
		B_{10}=-\frac{\lambda}{(\lambda x+y)^2}[e^{\lambda x+y}( x_3y^2+ x_1x_3y+\lambda x_3xy+\lambda x_1x_3x+ x_5y&\\- x_3y- x_1x_3+\lambda x_5x)-\lambda x_5y+\lambda x_3y+\lambda x_1x_3-\lambda x_5x]&\end{array}\\
		\begin{array}{rr}
		C_{10}=-\frac{1}{(\lambda x+y)^2} \{e^{\lambda x+y}[x_3y(x_1+y)+\lambda x (x_3y+x_1x_2+x_3+x_5)&\\-x_1x_3+  + x_5y+]+x_1x_3-\lambda x_3x-x_5y-\lambda x_5x\}&
		\end{array}\\
		\begin{array}{l}
		D_{10}=\frac{x_3(1-e^{\lambda x+y})}{\lambda x+y}
		\end{array}.\\
		\end{cases}\]
		
		To describe all maximal dimensional $K$-orbits of $G^{\lambda}_{10}$, we only consider $F(\alpha_1,\dots,\alpha_5,\alpha,\beta)\\ \in \G^{\lambda*}_{10}$ with $\alpha_4 = 0 \neq \alpha_2\alpha_5$ or $\alpha_3 \alpha_4 \neq 0 = \alpha_5$ or $\alpha_4 \alpha_5 \neq 0$, the remaining ones are arbitrary. 
		
		\begin{itemize}
			\item The first case, $\alpha_4 = 0 \neq \alpha_2\alpha_5$. It is easily seen that each of the coordinates $x^*_1, x^*_3, x^*, y^*$ runs over line $\R$, while $x^*_4 \equiv 0$ and $x^*_2, x^*_5 \in \R; \, \alpha_2 x^*_2 > 0, \, \alpha_5 x^*_5 >0$. Therefore we get	 	
			$\Omega_F$ is a part of hyperplane as follows:
			\[\begin{array}{r} \Omega_F=\{(x^*_1, x^*_2, x^*_3, x^*_4, x^*_5, x^*, y^*) \in \G^*:   x^*_4 = 0, \alpha_2 x^*_2 > 0, \, \alpha_5 x^*_5 >0 \}.\end{array}\]
			\item 	The second case, $\alpha_3 \alpha_4 \neq 0 = \alpha_5$. It is obvious that each of the coordinates $x^*_1, x^*_2, x^*, y^*$ runs over line $\R$, while $x^*_5 \equiv 0$ and $x^*_3, x^*_4 \in \R; \, \alpha_3 x^*_3 > 0, \, \alpha_4 x^*_4 >0$. For this reason, we get	 	
			$\Omega_F$ is a part of hyperplane as follows:
			\[\begin{array}{r} \Omega_F=\{(x^*_1, x^*_2, x^*_3, x^*_4, x^*_5, x^*, y^*) \in \G^*:   x^*_5 = 0,  \alpha_3 x^*_3 > 0, \, \alpha_4 x^*_4 >0 \}.\end{array}\]
			\item The third case, $\alpha_4 \alpha_5 \neq 0$. Clearly, each of the coordinates $x^*_1$, $x^*$, $y^*$ runs over line $\R$, while $x^*_4, x^*_5 \in \R; \, \alpha_4 x^*_4 > 0, \, \alpha_5 x^*_5 >0$. By an easy computation it follows that the coordinates $x^*_2, x^*_3, x^*_4, x^*_5$ are satisfy the following equation
			\[\frac{x^*_2}{x^*_4} - \frac{x^*_3}{x^*_5} + \ln \frac{\vert x^*_5 \vert}{{\vert x^*_4 \vert}^{\lambda}}  =  \frac{\alpha_2}{\alpha_4} - \frac{\alpha_3}{\alpha_5} + \ln \frac{\vert \alpha_5 \vert}{{\vert \alpha_4 \vert}^{\lambda}}.\]
			Thus we get	 	
			$\Omega_F$ is a part of transcendental hypersurface as follows:
			\[\begin{array}{r}
			\Omega_F=\{(x^*_1, x^*_2, x^*_3, x^*_4, x^*_5, x^*, y^*) \in \G^*: \frac{x^*_2}{x^*_4} - \frac{x^*_3}{x^*_5} + \ln \frac{\vert x^*_5 \vert}{{\vert x^*_4 \vert}^{\lambda}}  = \frac{\alpha_2}{\alpha_4} - \frac{\alpha_3}{\alpha_5} + \ln \frac{\vert \alpha_5 \vert}{{\vert \alpha_4 \vert}^{\lambda}},\\ \alpha_4 x^*_4 > 0, \, \alpha_5 x^*_5 >0 \}.\end{array}\] 
		\end{itemize}
This completes the proof for $G = G^{\lambda}_{10}$.	
	\end{enumerate}
\end{proof}	
		
	{\small \begin{longtable}{p{.050\textwidth} p{.500\textwidth} p{.355\textwidth}}
			\caption*{Table 4}\label{tab4} \\
			\hline No. & $B_{F}$, $F(\alpha_1,\dots,\alpha_5,\alpha,\beta) \in \G^*$ & Condition of $\Rank(B_{F})=6$ \endfirsthead 
			\caption*{Table 4 (continued)} \\ 
			\hline No. & $B_{F}$, $F(\alpha_1,\dots,\alpha_5,\alpha,\beta) \in \G^*$ &  Condition of $\Rank(B_{F})=6$ \\ \hline \endhead \hline
			\endlastfoot \hline
			$\G_2$ & $\begin{bmatrix}
			0 & \alpha_4 & \alpha_5 & 0 & 0 & - \alpha_1 & 0 \\
			- \alpha_4 & 0 & 0 & 0 & 0 &0 & 0 \\
			- \alpha_5 & 0 & 0 & 0 & 0 & 0 & - \alpha_3 \\
			0 & 0 & 0 & 0 & 0 & - \alpha_4 & 0\\
			0 & 0 & 0 & 0 & 0 & - \alpha_5 & - \alpha_5\\
			\alpha_1 & 0 & 0 & \alpha_4 & \alpha_5 & 0 & 0\\
			0 & 0 & \alpha_3 & 0 & \alpha_5 & 0 & 0
			\end{bmatrix}$ &  $ \alpha_4 \neq 0 \neq {\alpha_3}^2 + {\alpha_5}^2 $\\
			$\G_3$ & $\begin{bmatrix}
			0 & a_4 & a_5 & 0 & 0 & 0 & 0 \\
			-a_4 & 0 & 0 & 0 & 0 & -a_2 & 0 \\
			-a_5 & 0 & 0 & 0 & 0 & 0 & -a_3 \\
			0 & 0 & 0 & 0 &0 & -a_4 & 0\\
			0 & 0 & 0 & 0 & 0 & 0 & -a_5\\
			0 & a_2 & 0 & a_4 & 0 & 0 & 0\\
			0 & 0 & a_3 & 0 & a_5 & 0 & 0
			\end{bmatrix}  $ & $\alpha_4 = 0 \neq \alpha_2\alpha_5$ or $\alpha_4 \neq 0  \neq\alpha_3^2+\alpha_5^2$\\
			&&\\
			$\G_4^{00}$ & $\begin{bmatrix}
			0 & a_4 & a_5 & 0 & 0 & -a_1 & 0 \\
			-a_4 & 0 & 0 & 0 & 0 & 0 & -a_2 \\
			-a_5 & 0 & 0 & 0 & 0 & 0 & 0 \\
			0 & 0 & 0 & 0 &0 & -a_4 & -a_4\\
			0 & 0 & 0 & 0 & 0 & -a_5 & 0\\
			a_1 & 0 & 0 & a_4 & a_5 & 0 & 0\\
			0 & a_2 & 0 & a_4 & 0 & 0 & 0
			\end{bmatrix} $ & $\alpha_5 \neq 0 \neq \alpha_2^2+\alpha_4^2$\\
			&&\\
			$\G_9$ & $ \begin{bmatrix}
			0 & a_4 & a_5 & 0 & 0 & 0 & 0 \\
			-a_4 & 0 & 0 & 0 & 0 & 0 & -a_2\\
			-a_5 & 0 & 0 & 0 & 0 & -a_3 & -a_5 \\
			0 & 0 & 0 & 0 &0 & 0 & -a_4\\
			0 & 0 & 0 & 0 & 0 & -a_5 & 0\\
			0 & 0 & a_3 & 0 & a_5 & 0 & 0\\
			0 & a_2 & a_5 & a_4 & 0 & 0 & 0
			\end{bmatrix}$ & $\alpha_4 = 0 \neq \alpha_2\alpha_5$ or $\alpha_4 \neq 0   \neq\alpha_3^2+\alpha_5^2$ \\
			&&\\
			$\G_{10}^{\lambda}$ & $\begin{bmatrix}
			0 & a_4 & a_5 & 0 & 0 & 0 & 0 \\
			-a_4 & 0 & 0 & 0 & 0 & -a_2 & 0\\
			-a_5 & 0 & 0 & 0 & 0 & -\lambda a_3 & -a_3-a_5 \\
			0 & 0 & 0 & 0 &0 & -a_4 & 0\\
			0 & 0 & 0 & 0 & 0 & -\lambda a_5 & -a_5\\
			0 & a_2 & \lambda a_3 & a_4 & \lambda a_5 & 0 & 0\\
			0 & 0 & a_3+a_5 & 0 & a_5 & 0 & 0
			\end{bmatrix} $ & $\alpha_4 = 0 \neq \alpha_2\alpha_5$ or $\alpha_4 \neq 0 \neq  \alpha_3^2+\alpha_5^2$ \\
			&&\\
			\hline
	\end{longtable}}
	
	{\small \begin{longtable}{p{.050\textwidth}  p{.860\textwidth}}
			\caption*{Table 5}\label{tab5} \\
			\hline No. & $\exp(\ad_U)$, $U=\sum_{i=1}^{5}x_iX_i+xX+yY (x_i, x, y \in \R)$  \endfirsthead 
			\caption*{Table 5 (continued)} \\ 
			\hline No. & $\exp(\ad_U)$, $U=\sum_{i=1}^{5}x_iX_i+xX+yY (x_i, x, y \in \R)$   \\ \hline \endhead \hline
			\endlastfoot \hline
			$\G_2$ & $ \begin{bmatrix}
			e^{x} & 0 & 0 & 0 & 0 & \frac{x_1(1-e^{x})}{x} & 0 \\
			0 & 1 & 0 & 0 & 0 &0 & 0 \\
			0 & 0 & e^{y} & 0 & 0 & 0 & \frac{x_3(1-e^{y})}{y} \\
			-x_2e^{x} & \frac{x_1(e^{x}-1)}{x} & 0 & e^{x} &0 & A_2 & 0\\
			\frac{x_3e^{x}(1-e^{y})}{y} & 0 & \frac{x_1(e^{x}-1)e^{y}}{x} & 0 & e^{x+y} & B_2 & C_2\\
			0 & 0 & 0 & 0 & 0 & 1 & 0\\
			0 & 0 & 0 & 0 & 0 & 0 & 1
			\end{bmatrix} $ \\
			&\hspace{5.0cm} where $  \begin{cases}
			A_2=\frac{e^{x}[x_1x_2(x-1)-x_4x]+x_1x_2+x_4x}{x^2
			}\\
			B_2=\frac{e^{x+y}x(x_1x_3-x_5y)-e^{x}x_1x_3(x+y)-x_1x_3y-x_5xy}{xy(x+y)}\\
			C_2=\frac{e^{y}x_1x_3(x+y)-e^{x+y}y(x_1x_3+x_5x)-x_1x_3x+x_5xy}{xy(x+y)}
			\end{cases}$ \\
			$\G_3$ & $\begin{bmatrix}
			1 & 0 & 0 & 0 & 0 & 0 & 0 \\
			0 & e^{x} & 0 & 0 & 0 & \frac{x_2(1-e^{x})}{x}  & 0 \\
			0 & 0 & e^{y} & 0 & 0 & 0 & \frac{x_3(1-e^{y})}{y} \\
			\frac{x_2(1-e^{x})}{x} & x_1 e^{x} & 0 & e^{x} &0 & A_3 & 0\\
			\frac{x_3 (1-e^{y})}{y} & 0 & x_1e^{y} & 0 & e^{y} & 0 & B_3\\
			0 & 0 & 0 & 0 & 0 & 1 & 0\\
			0 & 0 & 0 & 0 & 0 & 0 & 1
			\end{bmatrix}$  \\
			&  \hspace{6.5cm} where  $\begin{cases}
			A_3=-\frac{e^{x}(x_1x_2x-x_1x_2+x_4x)+x_1x_2-x_4x}{x^2}\\
			B_3=-\frac{e^{y}(x_1x_3y -x_1x_3+x_5y)+x_1x_3-x_5y}{x^2}
			\end{cases}$ \\
			& \\
			$\G_4^{00}$ & $\begin{bmatrix}
			e^{x} & 0 & 0 & 0 & 0 & \frac{x_1(1-e^{x})}{x} & 0 \\
			0 & e^{y} & 0 & 0 & 0 & 0  & \frac{x_2(1-e^{y})}{y} \\
			0 & 0 & 1 & 0 & 0 & 0& 0 \\
			\frac{x_2(e^{x}-e^{x+y})}{y} & \frac{x_1(e^{x+y}-e^{y})}{x} & 0 & e^{x+y} &0 & A_4 & C_4\\
			-x_3e^x & 0 & \frac{x_1(e^x-1)}{x}& 0 & e^x & B_4 & 0\\
			0 & 0 & 0 & 0 & 0 & 1 & 0\\
			0 & 0 & 0 & 0 & 0 & 0 & 1
			\end{bmatrix}$  \\
			& \hspace{4.7cm} where $\begin{cases}
			A_4=\frac{e^{x+y}x(x_1x_2-x_4y)-e^{x}x_1x_2(x+y)+y(x_1x_2+x_4x)}{xy(x+y)}\\
			B_4=\frac{e^xx_1x_3x-e^xx_1x_3-e^xx_5x+x_1x_3+x_5x}{x^2} \\
			C_4=\frac{e^{y}x_1x_2(x+y)-e^{x+y}y(x_1x_2+x_4x)+x(yx_4-x_1x_2)}{xy(x+y)}
			\end{cases}$ \\
			&\\
			$\G_9$ & $\begin{bmatrix}
			1 & 0 & 0 & 0 & 0 & 0 & 0 \\
			0 & e^{y} & 0 & 0 & 0 &0 & \frac{x_2(1-e^{y})}{y} \\
			0 & 0 & e^{x} & 0 & 0 &  \frac{x_3(1-e^{x})}{x}&0 \\
			\frac{x_2(1-e^{y})}{y} & x_1 e^{y} &0& e^{y} & 0 & 0 & B_9 \\
			\frac{x_3(1-e^{x})}{x} & 0 & (x_1+y)e^{x} & 0 & e^{x} & A_9 & \frac{x_3(1-e^{x})}{x} \\
			0 & 0 & 0 & 0 & 0 & 1 & 0\\
			0 & 0 & 0 & 0 & 0 & 0 & 1
			\end{bmatrix}$\\
			& \hspace{4.0cm}  where $  \begin{cases}
			A_9=-\frac{e^{x}(x_1x_3x+x_3xy-x_1x_3-x_3y+x_5x)+x_1x_3+x_3y-x_5x}{x^2}\\
			B_9=-\frac{e^{y}(x_1x_2y-x_1x_2+x_4y)+x_1x_2-x_4y}{y^2}
			\end{cases}  $ \\
			&\\
			$\G_{10}^{\lambda}$ & $\begin{bmatrix}
			1 & 0 & 0 & 0 & 0 & 0 & 0 \\
			0 & e^{x} & 0 & 0 & 0 &\frac{x_2(1-e^{x})}{x} & 0 \\
			0 & 0 & e^{\lambda x+y} & 0 & 0 &  \lambda D_{10}&D_{10} \\
			\frac{x_2(1-e^{x})}{x} & x_1 e^{x} &0& e^{x} & 0  & A_{10} & 0 \\
			D_{10} & 0 & (x_1+y)e^{\lambda x+y} & 0 & e^{\lambda x+y} & B_{10} & C_{10}\\
			0 & 0 & 0 & 0 & 0 & 1 & 0\\
			0 & 0 & 0 & 0 & 0 & 0 & 1
			\end{bmatrix}$  \\
			& \hspace{1.3cm}where $\begin{cases}
			\begin{array}{l}
			A_{10}=-\frac{1}{x^2}[e^{x}(x_1x_2x-x_1x_2+x_4x)+x_1x_2-x_4x]\end{array}\\
			\begin{array}{rr}
			B_{10}=-\frac{\lambda}{(\lambda x+y)^2}[e^{\lambda x+y}( x_3y^2+ x_1x_3y+\lambda x_3xy+\lambda x_1x_3x+ x_5y&\\- x_3y- x_1x_3+\lambda x_5x)-\lambda x_5y+\lambda x_3y+\lambda x_1x_3-\lambda x_5x]&\end{array}\\
			\begin{array}{rr}
			C_{10}=-\frac{1}{(\lambda x+y)^2} \{e^{\lambda x+y}[x_3y(x_1+y)+\lambda x (x_3y+x_1x_2+x_3+x_5)&\\-x_1x_3 + x_5y]+x_1x_3-\lambda x_3x-x_5y-\lambda x_5x\}&
			\end{array}\\
			\begin{array}{l}
			D_{10}=\frac{x_3(1-e^{\lambda x+y})}{\lambda x+y}
			\end{array}\\
			\end{cases}$    \\
			\hline
	\end{longtable}}

\begin{rem}[{\bf Geometrical characteristics of maximal $K$-orbits}]\label{RemarkOrbits} By looking at the picture of $K$-orbits of maximal dimension of considered Lie groups, we have some geometrical characteristics as follows.
	\begin{enumerate}[(i)]
		\item Because $G$ is exponential for any group $G \in \{G_{2}, G_{3}, G_4^{00}, G_{9},  G_{10}^{\lambda}\}$, all $K$-orbits of $G$ are connected, simply connected submanifolds of $\G^*$. Moreover, they are all homeomorphic to Euclidean spaces (see \cite[$\S$15]{K76}). In particular, all 6-dimensional $K$-orbits of $G$ are homeomorphic to $\R^6$. It can be easily verified by using the picture of $K$-orbits in Theorem \ref{picture K-orbit} above.
		\item For each group $G \in \{G_{2}, G_{3}, G_4^{00}, G_{9},  G_{10}^{\lambda}\}$, there are exactly two types of maximal dimensional $K$-orbits. Each of the $K$-orbits of the first type is a part of a certain hyperplane. The second type contains the 6-dimensional $K$-orbits which are parts of certain (transcendental or algebraic) hypersurfaces in $\G^*$. 
		\item Each group $G \in \{G_{2}, G_{3}, G_4^{00}, G_{9},  G_{10}^{\lambda}\}$ has exactly four 6-dimensional $K$-orbits of the first type. These $K$-orbits are four parts of only one certain hyperplane in $\G^*$ and they are formed when ``cutting" this hyperplane by two other ones. 
		For example, in the case of $G = G_2$, four $K$-orbits of the first type are exactly 
		\[\{x^*_5 = 0, x^*_3 > 0, x^*_4 > 0\}, \, \, \{x^*_5 = 0, x^*_3 < 0, x^*_4 > 0\},\]
		\[\{x^*_5 = 0, x^*_3 < 0, x^*_4 < 0\}, \, \, \{x^*_5 = 0, x^*_3 > 0, x^*_4 < 0\}.\]
		There are four connected components of $\{x^*_5 = 0\} \setminus \bigl(\{x^*_3 = 0\} \cup \{x^*_4 = 0\}\bigr)$.

		\noindent A completely similar situation occurs for all remaining groups. 
		\item For each group $G \in \{G_{2}, G_{3}, G_4^{00}, G_{9},  G_{10}^{\lambda}\}$, there is an infinite family of $K$-orbits in the second type. One such $K$-orbit is always a part of a (transcendental or algebraic) hypersurface in $\G^*$. Each such hypersurface also generates four $K$-orbits of the second type by ``cutting" this hypersurface by two certain hyperplanes. 
		For example, in the case of $G = G_2$, ``cutting" a (algebraic) hypersurface of the~form $\{x^*_2-\frac{x^*_3 x^*_4}{x^*_5} = c\}$
		(for some constant $c \in \R$) by two hyperplanes $\{x^*_4 = 0\}$ and $\{x^*_5 = 0\}$ we obtain four $K$-orbits of the second type as follows: 
		\[\hspace{1cm}\{x^*_2-\frac{x^*_3 x^*_4}{x^*_5} = c; x^*_4 > 0, x^*_5 > 0\}, \{x^*_2-\frac{x^*_3 x^*_4}{x^*_5} = c; x^*_4 < 0, x^*_5 > 0\},\]
		\[\hspace{1cm}\{x^*_2-\frac{x^*_3 x^*_4}{x^*_5} = c; x^*_4 < 0, x^*_5 < 0\}, \{x^*_2-\frac{x^*_3 x^*_4}{x^*_5} = c; x^*_4 > 0, x^*_5 < 0\}.\]		
		They are also four connected components of 
		\[\{x^*_2-\frac{x^*_3 x^*_4}{x^*_5} = c\} \setminus \bigl(\{x^*_3 = 0\} \cup \{x^*_4 = 0\}\bigr).\]
		Now we consider a $K$-orbit $\{x^*_2-\frac{x^*_3 x^*_4}{x^*_5} = c; x^*_4 > 0, x^*_5 > 0\}$. To visually illustrate the geometry, we 
project this $K$-orbit onto the 4-dimensional plane $\{x^*_1 = x^* = y^* = 0\}$, then by considering its cross-section with the hyperplane $x^*_3 = a$ (constant), we get the image of this $K$-orbit in Figure 1 below. 
		
	\begin{figure} [!htp]
			\centering 
			\includegraphics[draft=false,scale=0.70]{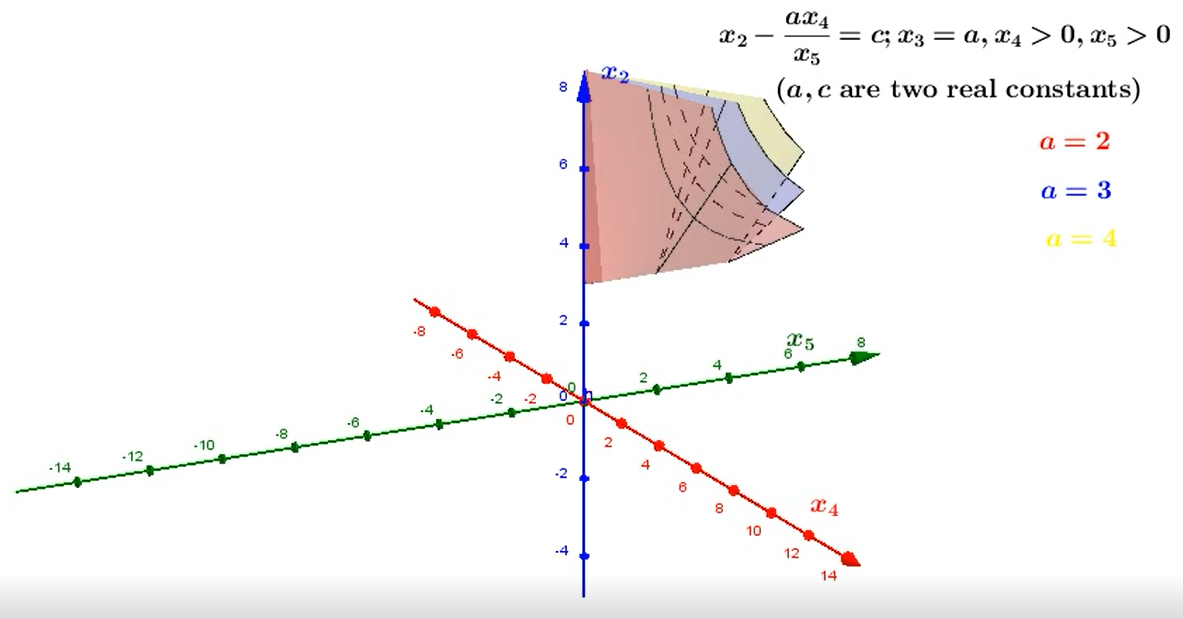}
			\centerline{Figure 1: Geometric illustration of $K$-orbit}
		\end{figure} 
\noindent The situation is also exactly the same for the rest of the groups.
		\item Now we consider the group $G_2$. By letting $\alpha_5 \rightarrow 0$, of course we have
		\[F(\alpha_1,\alpha_2,\alpha_3, \alpha_4, \alpha_5,\alpha,\beta) \rightarrow F_0 (\alpha_1,\alpha_2,\alpha_3, \alpha_4,0,\alpha,\beta).\] 
		But then, it is clear that the orbit $\Omega_F$ of $F$ does not ``converge" to the one $\Omega_{F_0}$  of $F_0$. In other words, four $K$-orbits of the first type seem to play a role ``singularity" and should be excluded from the family of generic $K$-orbits. 
		
\noindent For all remaining groups, we also have a completely similar situation.
		
		\item For any group $G$ from the set $\{G_{2}, G_{3}, G_4^{00}, G_{9},  G_{10}^{\lambda}\}$, it is worth noticing that the family $\mathcal{F}_G$ of all $K$-orbits which are parts of hypersurfaces (i.e. the orbits of second type) forms a cover of the same open set 
		\[ V: = \{(x^*_1, \dots, x^*_5, x^*, y^*) \in \G^* : x^*_4 x^*_5 \neq 0\} \subset \G^*.\]
		Moreover, $V$ can be identified with $\R^3 \times (\R \setminus \{0\}) \times (\R \setminus \{0\}) \times \R^2$ which is open in $\R^7$. That means
		\begin{equation}\label{FoliatedManifold}
		\bigcup\{\Omega/\Omega\in \mathcal{F}_G\} = V = V_{++} \sqcup V_{-+} \sqcup V_{- -} \sqcup V_{+-} \tag{3}
		\end{equation}
		where
		\begin{equation}\label{SS1}
		V_{++} : = \R^3 \times \R_{+} \times \R_{+} \times \R^2, \, \, V_{-+} : = \R^3 \times \R_{-} \times \R_{+} \times \R^2 \tag{4}
		\end{equation}
		\begin{equation}\label{SS2}
		V_{--} : = \R^3 \times \R_{-} \times \R_{-} \times \R^2, \, \, V_{+-+} : = \R^3 \times \R_{+} \times \R_{-} \times \R^2 \tag{5}
		\end{equation}
		and $\R_{+}$ and $\R_{-}$ are, as usual, the sets of all real numbers which are positive and negative, respectively. 
		
		Obviously, $V$ is open submanifold of $\G^*$ (with natural differential structure) and each $K$-orbit $\Omega$ from $\mathcal{F}_G$ is a 6-dimensional submanifold of $V \subset \G^*$.		
		In the next subsection, we will prove that $\mathcal{F}_G$ forms a measurable foliation on the open submanifold $V$.
	\end{enumerate}
\end{rem}

\subsection{Foliations Formed by the Generic $\boldsymbol{K}$-orbits of Considered Lie \\Groups}

We will now establish in this subsection the remaining new results of the paper on the foliations formed by the (generic) 6-dimensional $K$-orbits of the second type of each Lie group $G \in \{G_{2}, G_{3},  G_4^{00}, G_{9}, G_{10}^{\lambda}\}$. Recall that $V$ is the open submanifold of $\G^*$ which is given by Equality (\ref{FoliatedManifold}) and $\mathcal{F}_G$ is the family of all 6-dimensional $K$-orbits of $G$ such that each of them is a part of a certain hypersurface as mentioned in Item (vi) of Remark \ref{RemarkOrbits}.

\begin{thm}\label{FormedFoliation}
	For any group $G \in \{G_{2}, G_{3},  G_4^{00}, G_{9}, G_{10}^{\lambda}\}$, the family $\mathcal{F}_G$ forms a measurable foliation on the open manifold $V$ (in sense of Connes \cite{Con82}) and it is called the foliation associated with $G$.
\end{thm}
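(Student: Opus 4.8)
The plan is to exhibit, for each group $G$ in the list, an explicit pair $(\mathcal{X}_G, \mu_G)$ where $\mathcal{X}_G$ is a smooth nowhere-vanishing section of $\Lambda^6(\mathcal{F}_G)$ and $\mu_G$ is an $\mathcal{X}_G$-invariant measure on $V$, and then invoke the bijection recalled at the end of Subsection \ref{Subsection2.2} between transverse measures and equivalence classes of such pairs. First I would verify that $\mathcal{F}_G$ is genuinely a foliation: by Theorem \ref{picture K-orbit} and Item (vi) of Remark \ref{RemarkOrbits}, the $6$-dimensional $K$-orbits of the second type cover $V$ and are the connected components of the level sets of a single smooth submersion $f_G \colon V \to \mathbb{R}$ (for instance $f_{G_2}(x^*) = x^*_2 - x^*_3 x^*_4 / x^*_5$, $f_{G_3}(x^*) = x^*_2/x^*_4 - x^*_3/x^*_5$, $f_{G_9}(x^*) = x^*_2/x^*_4 - x^*_3/x^*_5 + \ln|x^*_4|$, and similarly for $G_4^{00}$ and $G_{10}^{\lambda}$, using the explicit formulas from the proof of Theorem \ref{picture K-orbit}); since each $f_G$ has nonvanishing differential on $V$, its level sets form a codimension-one foliation and $\mathcal{F}_G = \ker df_G$ is the defining subbundle, with the $K$-orbits precisely the leaves (four leaves per level value, one in each of $V_{++}, V_{-+}, V_{--}, V_{+-}$).

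Next I would build the invariant data. Since $V$ is an open subset of $\mathbb{R}^7$ and $\mathcal{F}_G$ is orientable (being the kernel of a nonvanishing $1$-form), we may take $\mathcal{X}_G$ to be the unique $6$-vector field on $V$ satisfying $\iota_{\mathcal{X}_G}(dx^*_1 \wedge \cdots \wedge dy^*) = df_G$, which is smooth, nowhere zero, and tangent to the leaves; it therefore lies in $C^\infty\bigl(\Lambda^6(\mathcal{F}_G)\bigr)^+$ after fixing the orientation. For the measure I would take $\mu_G$ to be a suitable smooth multiple of Lebesgue measure on $V \subset \mathbb{R}^7$ — e.g. $\mu_{G_2}$ proportional to $|x^*_5|\, dx^*_1 \cdots dy^*$, and analogous weights involving powers of $x^*_4, x^*_5$ in the other cases, chosen so that in a foliation chart adapted to $f_G$ (transverse coordinate $t = f_G$, the remaining six coordinates parametrizing the plaque) the product decomposition $\mu_G = \mu_N \times \mu_\Pi$ has the property that $\mu_\Pi$ agrees, up to a factor depending only on $t$, with the leaf measure $\mu_{\mathcal{X}_G}$ determined by the volume $6$-vector $\mathcal{X}_G$. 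Concretely, this reduces to computing the Jacobian of the coordinate change $(x^*_1,\dots,y^*) \mapsto (t, \text{plaque coords})$ and choosing the density weight to cancel its transverse-dependent part; this is the one genuinely computational step, and I expect it to be the main obstacle, since it must be carried out case by case and the hypersurface equations for $G_9$ and $G_{10}^{\lambda}$ are transcendental. The verification that $\mu_G(K) < \infty$ for compact $K$ inside a smooth transversal is immediate from local boundedness of the density.

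Finally, with the pair $(\mathcal{X}_G, \mu_G)$ in hand and $\mu_G$ shown to be $\mathcal{X}_G$-invariant in the sense of the definition (i.e. $\mu_{\mathcal{X}_G}$ proportional to $\mu_\Pi$ in every foliation chart), I would conclude by the quoted correspondence that this pair determines a transverse measure $\Lambda_G$ for $(V, \mathcal{F}_G)$, hence $(V, \mathcal{F}_G)$ equipped with $\Lambda_G$ is a measurable foliation, which is exactly the assertion. I would remark that the five cases are handled by the same scheme and differ only in the explicit formula for $f_G$ and the corresponding density weight, so it suffices to write out one case (say $G_2$) in full detail and indicate the modifications for the others.
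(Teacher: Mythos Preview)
Your approach is correct and genuinely different from the paper's. For the foliation step, you use the submersion $f_G$ whose level sets are the orbits, which is cleaner than what the paper does: the paper writes down, for each $G$, six explicit vector fields $\mathfrak{X}_1,\dots,\mathfrak{X}_6$ on $V$, integrates each one to get its flow, and composes the flows to see that the maximal integral submanifolds are exactly the $K$-orbits described in Theorem~\ref{picture K-orbit}. Your submersion argument bypasses all of that. For the $6$-vector, the paper simply wedges its six explicit fields, whereas you use the interior-product construction $\iota_{\mathcal{X}_G}\omega=df_G$; both give nowhere-vanishing sections of $\Lambda^6(\mathcal{F}_G)$.

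Where your plan is weaker is the invariance step, which you flag as ``the main obstacle'' and propose to handle with case-by-case weighted densities like $|x_5^*|\,dx_1^*\cdots dy^*$. Two remarks. First, with \emph{your} choice of $\mathcal{X}_G$ no weight is needed: in the chart with transverse coordinate $t=f_G$ and leaf coordinates $(u_1,\dots,u_6)$ obtained by dropping one of the original $x_i^*$, the Jacobian of the change of variables is $1$ (for $G_2$, because $\partial f_{G_2}/\partial x_2^*=1$; similarly in the other cases), so $\omega=dt\wedge du_1\wedge\cdots\wedge du_6$, hence $\mathcal{X}_G=\partial_{u_1}\wedge\cdots\wedge\partial_{u_6}$ and plain Lebesgue measure already decomposes as $dt\times \mu_{\mathcal{X}_G}$. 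Second, the paper avoids any chart computation altogether by a group-theoretic observation: it takes $\mu$ to be Lebesgue measure and notes that the $\mathcal{X}$-invariance is equivalent to invariance of Lebesgue measure under the coadjoint action restricted to $V$; but each $K(\exp_G(U))$ is a \emph{linear} map on $\mathcal{G}^*$, so its Jacobian is the constant $\det K(\exp_G(U))$, independent of the point in the orbit. This single sentence handles all five groups at once and is the main economy of the paper's argument over yours.
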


\begin{proof}
	The proof is analogous to the case of MD-groups in \cite{V87, V90, VT08, VH09, VHT14}. For any group $G$ in the considered set, we first need build a suitable differential system $S_G$ of rank 6 on the manifold $V$ such that each K-orbit $\Omega$ from ${\mathcal{F}}_{G}$ is a maximal
	connected integrable submanifold corresponding to this system. As the next	step, we have to show that the Lebegues measure is invariant for some smooth polyvector field $\X$ of degree 6 such that it generates the system $S_G$.
\begin{enumerate}
	\item For $G = G_2$.  For any $F(\alpha_1,\alpha_2, \alpha_3, \alpha_4, ,\alpha_5,\alpha,\beta) \in \G^*$,
	by Theorem \ref{picture K-orbit}, the $K$-orbits $\Omega_F$ belongs to $\mathcal{F}_G$ if and only if $\alpha_4\alpha_5 \neq 0$. Furthermore we have 
	\[\begin{array}{r}
	\Omega_F=\{(x^*_1, x^*_2, x^*_3, x^*_4, x^*_5, x^*, y^*) \in \G^*: x^*_2-\frac{x^*_3 x^*_4}{x^*_5} =\frac{\alpha_2\alpha_5-\alpha_3\alpha_4}{\alpha_5}, \alpha_4 x^*_4 > 0, \, \alpha_5 x^*_5 >0 \}.\end{array}\]
	
	On the open submanifold $V (\subset \G^*)$ with natural differential structure), we consider the following differential system $S_G = \{\X_1, \X_2, \X_3, \X_4, \X_5, \X_6\}$ 
	\[S_{G}: \begin{cases}\begin{array}{lllllll}
	\X_1:=\frac{\partial}{\partial x^*_1}&&&&&&\\
	\X_2:=&\frac{x^*_4}{x^*_5}\frac{\partial}{\partial x^*_2}&+\frac{\partial}{\partial x^*_3}&&&&\\
	\X_3:=&\frac{x^*_3}{x^*_5}\frac{\partial}{\partial x^*_2}&&+\frac{\partial}{\partial x^*_4}&&&\\
	\X_4:=&-\frac{x^*_3x^*_4}{x^{*2}_5}\frac{\partial}{\partial x^*_2}&&&+\frac{\partial}{\partial x^*_5}&&\\
	\X_5:=&&&&&\frac{\partial}{\partial x^*}&\\
	\X_6:=&&&&&&\frac{\partial}{\partial y^*}.\end{array}
	\end{cases}\]
	Obviously, $\Rank(S_G)=6$. Furthermore, $\X_i$ is smooth all over $V$, $i=\overline{1,6}$. Now, we will show that $S_{G}$ produces $\mathcal{F}_{G}$, i.e. show that each K-orbit $\Omega$ from ${\mathcal{F}}_{G}$ is a maximal connected integrable submanifold corresponding to $S_{G}$.
	
	Firstly, we consider three vector fields  $\X_1, \X_5, \X_6$. Obviously, the flows (i.e. one-parameter subgroups) of $\X_1, \X_5$ and $\X_6$ are determined as follows
	\begin{align*}
	\theta^{\X_1}_{x^*_1-\alpha_1}: F \mapsto \theta^{\X_1}_{x^*_1-\alpha_1}(F):=(x^*_1, \alpha_2,\alpha_3, \alpha_4, \alpha_5, \alpha, \beta)\\
	\theta^{\X_5}_{x^*-\alpha}: F \mapsto \theta^{\X_5}_{x^*-\alpha}(F):=(\alpha_1, \alpha_2,\alpha_3, \alpha_4, \alpha_5, x^*, \beta)\\
	\theta^{\X_6}_{y^*-\beta}: F \mapsto \theta^{\X_6}_{y^*-\beta}(F):=(\alpha_1, \alpha_2,\alpha_3, \alpha_4, \alpha_5, \alpha, y^*).
	\end{align*}
	
	Next, we consider $\X_2:=\frac{x^*_4}{x^*_5}\frac{\partial}{\partial x^*_2}+\frac{\partial}{\partial x^*_3}$. Assume that 
	\[\varphi: t \mapsto \varphi(t)=\bigl(x^*_1(t), x^*_2(t), x^*_3(t), x^*_4(t), x^*_5(t), x^*(t), y^*(t)\bigr)\]
	is an integral curve of $\X_2$ passing through $F=\varphi(0)$, where $t \in (-\epsilon, \epsilon) \subset \R$ for some positive real number $\epsilon$. Hence, we have
	\begin{align}\label{XG2}
	\varphi'(t)={\X_2}_{\varphi(t)} \Leftrightarrow & \sum_{i=1}^{5}x^{*'}_i(t)\frac{\partial}{\partial x^*_i}+x^{*'}(t)\frac{\partial}{\partial x^*}+y^{*'}(t)\frac{\partial}{\partial y^*} = \frac{x^*_4}{x^*_5}\frac{\partial}{\partial x^*_2}+\frac{\partial}{\partial x^*_3} \nonumber\\
	\Leftrightarrow &  \begin{cases}
	x^{*'}_1(t)= x^{*'}_4(t)= x^{*'}_5(t)=x^{*'}(t)= y^{*'}(t)=0\\
	x^{*'}_2(t)=\frac{x^{*}_4(t)}{x^{*}_5(t)}\\
	x^{*'}_3(t)=1
	\end{cases} \nonumber\\
	\Leftrightarrow &  \begin{cases}
	x^*_1,  x^*_4, x^*_5, x^*, y^* \mbox{ are constant functions}\\
	x^{*}_2(t)=\frac{x^{*}_4}{x^{*}_5}t + \const\\ x^{*}_3(t)=t+\const. \tag{6}
	\end{cases}
	\end{align}
	Combining with condition $F=\varphi(0)$, Equation (\ref{XG2}) gives us
	\[x^*_1=\alpha_1, x^*_2=\alpha_2+\frac{\alpha_4}{\alpha_5}t, x^*_3=\alpha_3+t, x^*_4=\alpha_4, x^*_5=\alpha_5, x^*=\alpha, y^*=\beta.\]
	Therefore, the flow 
	of $\X_2$ is
	\begin{align*}
	\theta^{\X_2}_{x^*_3-\alpha_3}: F \mapsto \theta^{\X_2}_{x^*_3-\alpha_3}(F):=&(\alpha_1, \alpha_2+(x^*_3-\alpha_3)\frac{\alpha_4}{\alpha_5},x^*_3, \alpha_4, \alpha_5, \alpha, \beta)\\
	=&(\alpha_1, \frac{x^*_3\alpha_4}{\alpha_5}+\alpha_2-\frac{\alpha_3\alpha_4}{\alpha_5},x^*_3, \alpha_4, \alpha_5, \alpha, \beta)
	\end{align*}
	Similarly, the flows of $\X_3$ and $\X_4$ are determined as follows
	\begin{align*}
	&\theta^{\X_3}_{x^*_4-\alpha_4}: F \mapsto \theta^{\X_3}_{x^*_4-\alpha_4}(F):=(\alpha_1, \frac{\alpha_3x^*_4}{\alpha_5}+\alpha_2-\frac{\alpha_3\alpha_4}{\alpha_5},\alpha_3, x^*_4, \alpha_5, \alpha, \beta)\\
	&\theta^{\X_4}_{x^*_5-\alpha_5}: F \mapsto \theta^{\X_4}_{x^*_5-\alpha_5}(F):=(\alpha_1, \frac{\alpha_3\alpha_4}{x^*_5}+\alpha_2-\frac{\alpha_3\alpha_4}{\alpha_5},\alpha_3, \alpha_4, x^*_5, \alpha, \beta).
	\end{align*}
	Finally, by setting $\theta=\theta^{\X_6}_{y^*-\beta}\circ\theta^{\X_5}_{x^*-\alpha}\circ\theta^{\X_4}_{x^*_5-\alpha_5}\circ\theta^{\X_3}_{x^*_4-\alpha_4}\circ\theta^{\X_2}_{x^*_3-\alpha_3}\circ\theta^{\X_1}_{x^*_1-\alpha_1}$, we~have
	\begin{align*}
	\theta(F)=&\theta^{\X_6}_{y^*-\beta}\circ\theta^{\X_5}_{x^*-\alpha}\circ\theta^{\X_4}_{x^*_5-\alpha_5}\circ\theta^{\X_3}_{x^*_4-\alpha_4}\circ\theta^{\X_2}_{x^*_3-\alpha_3}\circ\theta^{\X_1}_{x^*_1-\alpha_1}(F)\\
	=&(x^*_1, \frac{x^*_3x^*_4}{x^*_5}+\alpha_2-\frac{\alpha_3\alpha_4}{\alpha_5}, x^*_3, x^*_4, x^*_5, x^*, y^*)\\
	=&(x^*_1, x^*_2, x^*_3, x^*_4, x^*_5, x^*, y^*)
	\end{align*}
	where $  x^*_1, x^*, y^* \in \R$ and 
	$$x^*_2=\frac{x^*_3x^*_4}{x^*_5} +\alpha_2-\frac{\alpha_3\alpha_4}{\alpha_5}\Leftrightarrow x^*_2-\frac{x^*_3x^*_4}{x^*_5}=\frac{\alpha_2\alpha_5-\alpha_3\alpha_4}{\alpha_5}.$$
	Hence $\{\theta(F):\,x^*_1, x^*_3, x^*_4, x^*_5, x^*, y^* \in \R;\, \alpha_4x^*_4>0; \,\alpha_5x^*_5>0 \}$ $\equiv\Omega_F$, i.e. $\Omega_F$ is a maximal connected integrable submanifold corresponding to $S_{G}$. Therefore $S_{G}$ generates $\mathcal{F}_{G}$. In other words, $(V,\mathcal{F}_{G})$ is a 6-dimensional foliation for the case $G = G_2$.
	
	We now turn to the second step of the proof. Namely, we have to show that the foliation $(V,\mathcal{F}_{G})$ is measurable in the sense of Connes. As mentioned in Subsection \ref{Subsection2.2} of Preliminaries, to prove that $(V, \mathcal{F}_G)$ is measurable, we only need to choose some suitable pair ($\X, \mu$) on $V$ where $\X$ is some smooth 6-vector field defined on $V$, $\mu$ is some measure on $V$ such that $\X$ generates $S_G$ and $\mu$ is $\X$-invariant. 
	Namely, we choose $\mu$ to be exactly the Lebegues measure on $V$ and set $\X: = \X_1 \wedge \X_2 \wedge \X_3 \wedge \X_4 \wedge \X_5 \wedge \X_6$.
	Clearly, $\X$ is smooth, non-zero everywhere on $V$ and it is exactly a polyvector field of degree 6. Moreover $\X$ generates $S_G$. That is, when choosing on $(V, \mathcal{F}_G)$ a suitable orientation, then $\X \in C^{\infty}{\bigl({\Lambda}^{6}(\mathcal{F})\bigr)}^{+}$. It is obvious that the invariance of the Lebegues measure $\mu$ with respect to $\X$ is equivalent to the invariance of $\mu$ for the $K$-representation that is restricted to the foliated submanifold $V$ in $\G^*$. For any $U(x_1, x_2, x_3, x_4, x_5, x, y) \in \G$, direct computation show that Jacobi's determinant
	$J_U$ of differential mapping $K\bigl(\exp_G(U)\bigr)$ is a constant which depends only on U but do not depend on the coordinates of any point which moves in each $K$-orbit $\Omega \in \mathcal{F}_G$. In other words, the Lebegues measure $\mu$ is $\X$-invariant. The proof is complete for the case $G = G_2$.
	
	\item For $G = G_3$.  For any $F(\alpha_1,\alpha_2, \alpha_3, \alpha_4, ,\alpha_5,\alpha,\beta) \in \G^*_3$, by Theorem \ref{picture K-orbit}, the $K$-orbits $\Omega_F$ belongs to $\mathcal{F}_{G_3}$ if and only if $\alpha_4\alpha_5\neq0$. Moreover we have 
\[\begin{array}{r}
\Omega_F=\{(x^*_1, x^*_2, x^*_3, x^*_4, x^*_5, x^*, y^*) \in \G^*_3: \frac{x^*_2}{x^*_4} - \frac{x^*_3}{x^*_5}=\frac{\alpha_2}{\alpha_4} - \frac{\alpha_3}{\alpha_5},\alpha_4 x^*_4 > 0, \, \alpha_5 x^*_5 >0 \}.\end{array}\]

On the open subset $V$ ($\subset\G^*_3$) with natural differential structure, we consider the following differential system $S_{G_3} = \{\X_1, \X_2, \X_3, \X_4, \X_5, \X_6\}$ 
\[\hspace{1cm}S_{G_3}:\begin{cases}\begin{array}{lllllll}
\X_1:=\frac{\partial}{\partial x^*_1}& & & & & &\\
\X_2:=&  x^*_2\frac{\partial}{\partial  x^*_2}&&+x^*_4\frac{\partial}{\partial  x^*_4}& & &\\
\X_3:=& &x^*_3\frac{\partial}{\partial  x^*_3}& &+x^*_5\frac{\partial}{\partial  x^*_5} & &\\
\X_4:=&x^*_4\frac{\partial}{\partial  x^*_2} &+x^*_5\frac{\partial}{\partial  x^*_3}& & & &  \\
\X_5:= & & & & &\frac{\partial}{\partial x^*}& \\
\X_6:=& & & & & &\frac{\partial}{\partial y^*}.
\end{array}
\end{cases}\]
The definite matrix of $S_{G_3}$ is
\[M=\begin{bmatrix}
1 & & & & & \\
&x^*_2 &&x^*_4& & \\
& & x^*_3&x^*_5& & & \\
&x^*_4 &  & & &\\
& & x^*_5& & & \\
& & & & 1& \\
& & & & & 1\\
\end{bmatrix}.\]
Obviously, $\Rank(S_{G_3}) = \Rank(M)=6$. Furthermore, $\X_i$ is smooth all over $V$, $i=\overline{1,6}$. Now, we will show that $S_{{G_3}}$ produces $\mathcal{F}_{{G_3}}$, i.e. show that each K-orbit $\Omega$ from ${\mathcal{F}}_{{G_3}}$ is a maximal connected integrable submanifold corresponding to~$S_{{G_3}}$.

Firstly, we consider three vector fields $\X_1, \X_5, \X_6$. Obviously, the flows of $\X_1, \X_5$ and $\X_6$ are determined as follows
\begin{align*}
\theta^{\X_1}_{x^*_1-\alpha_1}: F \mapsto \theta^{\X_1}_{x^*_1-\alpha_1}(F):=(x^*_1, \alpha_2,\alpha_3, \alpha_4, \alpha_5, \alpha, \beta)\\
\theta^{\X_5}_{x^*-\alpha}: F \mapsto \theta^{\X_5}_{x^*-\alpha}(F):=(\alpha_1, \alpha_2,\alpha_3, \alpha_4, \alpha_5, x^*, \beta)\\
\theta^{\X_6}_{y^*-\beta}: F \mapsto \theta^{\X_6}_{y^*-\beta}(F):=(\alpha_1, \alpha_2,\alpha_3, \alpha_4, \alpha_5, \alpha, y^*).
\end{align*}

Next, we consider $\X_2:=x^*_2 \frac{\partial}{\partial  x^*_2}+x^*_4\frac{\partial}{\partial  x^*_4}$. Assume that 

$$\varphi: x \mapsto \varphi(x)=(x^*_1(x), x^*_2(x), x^*_3(x), x^*_4(x), x^*_5(x), x^*(x), y^*(x))$$
be an integral curve of $\X_2$ from $F=\varphi(0)$, where $x \in (-\epsilon, \epsilon) \subset \R$ for some positive real number $\epsilon$. Then we have
\begin{align}\label{G3X2}
\hspace{0.5cm}\varphi'(x)={\X_2}_{\varphi(x)} \Leftrightarrow & \sum_{i=1}^{5}x^{*'}_i(x)\frac{\partial}{\partial x^*_i}+x^{*'}(x)\frac{\partial}{\partial x^*}+y^{*'}(x)\frac{\partial}{\partial y^*} =x^*_4(x)\frac{\partial}{\partial  x^*_2} +x^*_5(x)\frac{\partial}{\partial  x^*_3} \nonumber\\
\Leftrightarrow &  \begin{cases}
x^{*'}_1(x)= x^{*'}_3(x)= x^{*'}_5(x)=x^{*'}(x)= y^{*'}(x)=0\\
x^{*'}_2(x)=x^{*}_2(x)\\
x^{*'}_4(x)=x^*_4(x).\tag{7}
\end{cases}
\end{align}
Combining with condition $F=\varphi(0)$, Equation \eqref{G3X2} gives us
\begin{equation}\label{X2G3}
x^*_1=\alpha_1,\,x^*_2= \alpha_2e^x,\, x^*_3=\alpha_3,\, x^*_4=\alpha_4e^x, \,x^*_5=\alpha_5,\, x^*=\alpha,\, y^*=\beta.\tag{8}
\end{equation}
Similarly, we consider $\X_3:=x^*_3\frac{\partial}{\partial  x^*_3} +x^*_5\frac{\partial}{\partial  x^*_5}$. Assume that

$$\varphi: y \mapsto \varphi(y)=(x^*_1(y), x^*_2(y), x^*_3(y), x^*_4(y), x^*_5(y), x^*(y), y^*(y))$$
be an integral curve of $\X_3$ from $F=\varphi(0)$, where $y \in (-\epsilon, \epsilon) \subset \R$ for some positive real number $\epsilon$. Then we have
\begin{align}\label{G3X3}
\hspace{1cm}\varphi'(y)={\X_3}_{\varphi(y)}
\Leftrightarrow & \sum_{i=1}^{5}x^{*'}_i(y)\frac{\partial}{\partial x^*_i}+x^{*'}(y)\frac{\partial}{\partial x^*}+y^{*'}(y)\frac{\partial}{\partial y^*}=x^*_3(y)\frac{\partial}{\partial  x^*_3} +x^*_5(y)\frac{\partial}{\partial  x^*_5} \nonumber\\
\Leftrightarrow &  \begin{cases}
x^{*'}_1(y)= x^{*'}_2(y)=x^{*'}_4(y)= x^{*'}(y)= y^{*'}(y)=0\\
x^{*'}_3(y)=x^*_3(y)\\
x^{*'}_5(y)=x^*_5(y). \tag{9}
\end{cases}
\end{align}
Combining with condition $F=\varphi(0)$, Equation \eqref{G3X3} gives us
\begin{equation}\label{X3G3}
\hspace{0.5cm}x^*_1=\alpha_1,\,x^*_2= \alpha_2,\, x^*_3=\alpha_3e^y, \,x^*_4=\alpha_4,\, x^*_5=\alpha_5e^y,\,x^*=\alpha,\,y^*=\beta.\tag{10}
\end{equation}
Similarly, we consider $\X_4:=x^*_4 \frac{\partial}{\partial  x^*_2} +x^*_5\frac{\partial}{\partial  x^*_3}$. Assume that
$$\varphi: x_1 \mapsto \varphi(x_1)=(x^*_1(x_1), x^*_2(x_1), x^*_3(x_1), x^*_4(x_1), x^*_5(x_1), x^*(x_1), y^*(x_1))$$
be an integral curve of $\X_4$ from $F=\varphi(0)$, where $x_1 \in (-\epsilon, \epsilon) \subset \R$ for some positive real number $\epsilon$. Then we have
\begin{align}\label{G3X4}
\varphi'(x_1)={\X_4}_{\varphi(x_1)}
\Leftrightarrow & \sum_{i=1}^{5}x^{*'}_i(x_1)\frac{\partial}{\partial x^*_i}+x^{*'}(x_1)\frac{\partial}{\partial x^*}+y^{*'}(x_1)\frac{\partial}{\partial y^*}=x^*_4(x_1) \frac{\partial}{\partial  x^*_2} +x^*_5(x_1)\frac{\partial}{\partial  x^*_3} \nonumber\\
\Leftrightarrow &  \begin{cases}
x^{*'}_1(x_1)= x^{*'}_4(x_1)=
x^{*'}_5(x_1)= x^{*'}(x_1)= y^{*'}(x_1)=0\\
x^{*'}_2(x_1)=x^*_4(x_1)\\
x^{*'}_3(x_1)=x^*_5(x_1). \tag{11}
\end{cases} 
\end{align}
Combining with condition $F=\varphi(0)$, Equation \eqref{G3X4} gives us
\begin{align}\label{X4G3}
&x^*_1=\alpha_1,\, x^*_2= \alpha_2+\alpha_4x_1,\, x^*_3=\alpha_3+\alpha_5x_1,\nonumber \\
&x^*_4=\alpha_4,\,	 x^*_5=\alpha_5,\, x^*=\alpha,\, y^*=\beta.\tag{12}
\end{align}
According to (\ref{X2G3}), (\ref{X3G3}), (\ref{X4G3}), the flows of $\X_2, \X_3, \X_4$ are
\begin{align*}
&\theta^{\X_2}_{x}: F \mapsto \theta^{\X_2}_{x}(F):=(\alpha_1, \alpha_2e^x,\alpha_3, \alpha_4e^x, \alpha_5, \alpha, \beta)\\
&\theta^{\X_3}_{y}: F \mapsto \theta^{\X_3}_{y}(F):=(\alpha_1, \alpha_2,\alpha_3e^y, \alpha_4, \alpha_5e^y, \alpha, \beta)\\
&\theta^{\X_4}_{x_1}: F \mapsto \theta^{\X_4}_{x_1}(F):=(\alpha_1, \alpha_2+\alpha_4x_1,\alpha_3+\alpha_5x_1, \alpha_4, \alpha_5, \alpha, \beta).
\end{align*}
Finally, by setting $\theta=\theta^{\X_6}_{y^*-\beta}\circ\theta^{\X_5}_{x^*-\alpha}\circ\theta^{\X_4}_{x_1}\circ\theta^{\X_3}_{y}\circ\theta^{\X_2}_{x}\circ\theta^{\X_1}_{x^*_1-\alpha_1}$, we have

\begin{align*}
\theta(F)=&\theta^{\X_6}_{y^*-\beta}\circ\theta^{\X_5}_{x^*-\alpha}\circ\theta^{\X_4}_{x_1}\circ\theta^{\X_3}_{y}\circ\theta^{\X_2}_{x}\circ\theta^{\X_1}_{x^*_1-\alpha_1}(F)\\
=&(x^*_1, x^*_2, x^*_3, x^*_4, x^*_5, x^*, y^*)
\end{align*}
where $x^*_1, x^*, y^* \in \R$ and
\[x^*_2= \alpha_2e^x+\alpha_4e^xx_1, x^*_3= \alpha_3e^y+\alpha_5e^yx_1, x^*_4= \alpha_4e^x, x^*_5=\alpha_5e^y.\]
By direct calculation, we get 
\[x^*_2=(\frac{x^*_3}{x^*_5}+\frac{\alpha_2}{\alpha_4}-\frac{\alpha_3}{\alpha_5})x^*_4 \Leftrightarrow  \frac{x^*_2}{x^*_4} - \frac{x^*_3}{x^*_5}=\frac{\alpha_2}{\alpha_4} - \frac{\alpha_3}{\alpha_5}.\]
Hence $\{\theta(F) : x^*_1, x_1, x, y, x^*, y^* \in \R; \alpha_4x^*_4>0; \alpha_5x^*_5>0 \}\equiv\Omega_F$, i.e. $\Omega_F$ is a maximal connected integrable submanifold corresponding to $S_{G_3}$. Therefore $S_{G_3}$ generate $\mathcal{F}_{G_3}$. In other words, $(V,\mathcal{F}_{G_3})$ is a 6-dimensional foliation.

We now turn to the second step of the proof. Namely, we have to show that the foliation $(V,\mathcal{F}_{G_3})$ is measurable in the sense of Connes. To prove that $(V, \mathcal{F}_{G_3})$ is measurable, we only need to choose some suitable pair ($\X, \mu$) on $V$ where $\X$ is some smooth 6-vector field defined on $V$, $\mu$ is some measure on $V$ such that $\X$ generates $S_{G_3}$ and $\mu$ is $\X$-invariant. 
Namely, we choose $\mu$ to be exactly the Lebegues measure on $V$ and set $\X: = \X_1 \wedge \X_2 \wedge \X_3 \wedge \X_4 \wedge \X_5 \wedge \X_6$.
Clearly, $\X$ is smooth, non-zero everywhere on $V$ and it is exactly a polyvector field of degree 6. Moreover $\X$ generates $S_{G_3}$. That is, when choosing on $(V, \mathcal{F}_{G_3})$ a suitable orientation, then $\X \in C^{\infty}{\bigl({\Lambda}^{6}(\mathcal{F})\bigr)}^{+}$. It is obvious that the invariance of the Lebegues measure $\mu$ with respect to $\X$ is equivalent to the invariance of $\mu$ for the $K$-representation that is restricted to the foliated submanifold $V$ in $\G_3^*$. For any $U(x_1, x_2, x_3, x_4, x_5, x, y) \in \G_3$, direct computation show that Jacobi's determinant
$J_U$ of differential mapping $K\bigl(\exp_{G_3}(U)\bigr)$ is a constant which depends only on U but do not depend on the coordinates of any point which moves in each $K$-orbit $\Omega \in \mathcal{F}_{G_3}$. In other words, the Lebegues measure $\mu$ is $\X$-invariant. The proof is complete for the case $G = G_3$.

\item For $G = G^{00}_4$.  For any $F(\alpha_1,\alpha_2, \alpha_3, \alpha_4, ,\alpha_5,\alpha,\beta) \in \G^{00*}_4$, by Theorem \ref{picture K-orbit}, the $K$-orbits $\Omega_F$ belongs to $\mathcal{F}_{G^{00}_4}$ if and only if $\alpha_4\alpha_5 \neq 0$. Moreover we have 
\[\hspace{1cm}\begin{array}{r}
\Omega_F=\{(x^*_1, x^*_2, x^*_3, x^*_4, x^*_5, x^*, y^*) \in \G^{00*}_4: x^*_3-\frac{x^*_2 x^*_5}{x^*_4} = \frac{\alpha_3\alpha_4-\alpha_2\alpha_5}{\alpha_4},\\ \alpha_4 x^*_4 > 0, \, \alpha_5 x^*_5 >0 \}.\end{array}\]

Now, on the open subset $V$ ( $\subset\G^{00*}_4$) with natural differential structure, we consider the following differential system $S_{G^{00}_4} = \{\X_1, \X_2, \X_3, \X_4, \X_5, \X_6\}$ 
\[\hspace{1cm}S_{G^{00}_4}:  \begin{cases}\begin{array}{lllllll}
\X_1:=\frac{\partial}{\partial x^*_1}& & & & & &\\
\X_2:=& \frac{\partial}{\partial  x^*_2} &+\frac{x^*_5}{x^*_4}\frac{\partial}{\partial  x^*_3}& & & &  \\
\X_3:=& &-\frac{x^*_2x^*_5}{x^{*2}_4}\frac{\partial}{\partial  x^*_3}& &+\frac{\partial}{\partial  x^*_4} & &\\
\X_4:=& & \frac{x^*_2}{x^*_4}\frac{\partial}{\partial  x^*_3}&&+\frac{\partial}{\partial  x^*_5}& & \\
\X_5:= & & & & &\frac{\partial}{\partial x^*}& \\
\X_6:=& & & & & &\frac{\partial}{\partial y^*}.
\end{array}
\end{cases}\]
The definite matrix of $S_{G^{00}_4}$ is
\[M=\begin{bmatrix}
1 & & & & &\\
&1& && & \\
&\frac{x^*_5}{x^*_4} &-\frac{x^*_2x^*_5}{x^*_4} & \frac{x^*_2}{x^*_4}& & & \\
&  &1 & & &\\
& & &1 & &\\
& & & & 1&\\
& & & & &1\\
\end{bmatrix}.\]
Obviously, $\Rank(S_{G^{00}_4}) = \Rank(M)=6$. Furthermore, $\X_i$ is smooth all over $V$, $i=\overline{1,6}$. Now, we will show that $S_{G^{00}_4}$ produces $\mathcal{F}_{G^{00}_4}$, i.e. show that each K-orbit $\Omega$ from ${\mathcal{F}}_{G^{00}_4}$ is a maximal connected integrable submanifold corresponding to~$S_{G^{00}_4}$.

Firstly, we consider three vector fields $\X_1, \X_5, \X_6$. Obviously, the flows of $\X_1, \X_5$ and $\X_6$ are determined as follows
\begin{align*}
\theta^{\X_1}_{x^*_1-\alpha_1}: F \mapsto \theta^{\X_1}_{x^*_1-\alpha_1}(F):=(x^*_1, \alpha_2,\alpha_3, \alpha_4, \alpha_5, \alpha, \beta)\\
\theta^{\X_5}_{x^*-\alpha}: F \mapsto \theta^{\X_5}_{x^*-\alpha}(F):=(\alpha_1, \alpha_2,\alpha_3, \alpha_4, \alpha_5, x^*, \beta)\\
\theta^{\X_6}_{y^*-\beta}: F \mapsto \theta^{\X_6}_{y^*-\beta}(F):=(\alpha_1, \alpha_2,\alpha_3, \alpha_4, \alpha_5, \alpha, y^*).
\end{align*}

Next, we consider $\X_2:=\frac{\partial}{\partial  x^*_2} +\frac{x^*_5}{x^*_4}\frac{\partial}{\partial  x^*_3}$.  Assume that 

$$\varphi: t \mapsto \varphi(t)=(x^*_1(t), x^*_2(t), x^*_3(t), x^*_4(t), x^*_5(t), x^*(t), y^*(t))$$
be an integral curve of $\X_2$ from $F=\varphi(0)$, where $t \in (-\epsilon, \epsilon) \subset \R$ for some positive real number $\epsilon$. Then we get
\begin{align}\label{G4X2}
\varphi'(t)={\X_2}_{\varphi(t)}
\Leftrightarrow & \sum_{i=1}^{5}x^{*'}_i(t)\frac{\partial}{\partial x^*_i}+x^{*'}(t)\frac{\partial}{\partial x^*}+y^{*'}(t)\frac{\partial}{\partial y^*}=\frac{\partial}{\partial  x^*_2} +\frac{x^*_5(t)}{x^*_4(t)}\frac{\partial}{\partial  x^*_3} \nonumber\\
\Leftrightarrow &  \begin{cases}
x^{*'}_1(t)= x^{*'}_4(t)= x^{*'}_5(t)=x^{*'}(t)= y^{*'}(t)=0\\
x^{*'}_2(t)=1\\
x^{*'}_3(t)=\frac{x^{*}_5(t)}{x^{*}_4(t)}
\end{cases} \nonumber\\
\Leftrightarrow &  \begin{cases}
x^*_1,  x^*_4, x^*_5, x^*, y^* \mbox{ are constant functions}\\
x^{*}_2(t)=t + \const\\ x^{*}_3(t)=\frac{x^{*}_5}{x^{*}_4}t+\const. \tag{13}
\end{cases}
\end{align}
Combining with condition $F=\varphi(0)$, Equation \eqref{G4X2} gives us
\[\hspace{1cm}x^*_1=\alpha_1, x^*_2=\alpha_2+t, x^*_3=\alpha_3+\frac{\alpha_5}{\alpha_4}t, x^*_4=\alpha_4, x^*_5=\alpha_5, x^*=\alpha, y^*=\beta.\]
Therefore, the flow of $\X_2$ is
\begin{align*}
\hspace{1cm}\theta^{\X_2}_{x^*_2-\alpha_2}: F \mapsto \theta^{\X_2}_{x^*_2-\alpha_2}(F):=&(\alpha_1, x^*_2,\alpha_3+(x^*_2-\alpha_2)\frac{\alpha_5}{\alpha_4}, \alpha_4, \alpha_5, \alpha, \beta)\\
=&(\alpha_1, x^*_2,\alpha_3+\frac{x^*_2\alpha_5}{\alpha_4}-\frac{\alpha_2\alpha_5}{\alpha_4}, \alpha_4, \alpha_5, \alpha, \beta).
\end{align*}

Similarly,  we consider $\X_3:=-\frac{x^*_2x^*_5}{x^{*2}_4}\frac{\partial}{\partial  x^*_3}+\frac{\partial}{\partial  x^*_4}$. Assume that
$$\varphi: u \mapsto \varphi(u)=(x^*_1(u), x^*_2(u), x^*_3(u), x^*_4(u), x^*_5(u), x^*(u), y^*(u))$$
be an integral curve of $\X_3$ from $F=\varphi(0)$, where $u \in (-\epsilon, \epsilon) \subset \R$ for some positive real number $\epsilon$. Then we have
\begin{align}\label{X3G4}
\hspace{1cm}\varphi'(u)={\X_4}_{\varphi(u)}
\Leftrightarrow & \sum_{i=1}^{5}x^{*'}_i(u)\frac{\partial}{\partial x^*_i}+x^{*'}(u)\frac{\partial}{\partial x^*}+y^{*'}(u)\frac{\partial}{\partial y^*}=\notag\\
&\hspace{3.0cm}=-\frac{x^*_2(u)x^*_5(u)}{x^{*2}_4(u)}\frac{\partial}{\partial  x^*_3}+\frac{\partial}{\partial  x^*_4} \nonumber\\
\Leftrightarrow &  \begin{cases}
x^{*'}_1(u)= x^{*'}_2(u)= x^{*'}_5(u)=x^{*'}(u)= y^{*'}(u)=0\\
x^{*'}_3(u)=-\frac{x^*_2(u)x^*_5(u)}{x^{*2}_4(u)}=-\frac{x^*_2(u)x^*_5(u)}{(u+\const_1)^2}\\
x^{*'}_4(u)=1 \Rightarrow x^*_4(u)=u+\const_1
\end{cases} \nonumber\\
\Leftrightarrow &  \begin{cases}
x^*_1,  x^*_2, x^*_5, x^*, y^* \mbox{ are constant functions}\\
x^{*}_3(u)=\frac{x^*_2x^*_5}{u+\const}_1 + \const_2\\ x^{*}_4(u)=u+\const_1. \tag{14}
\end{cases}
\end{align}
Combining with condition $F=\varphi(0)$, we have 
\[x^*_2(0)=\alpha_2,\, x^*_4(0)=0+\mbox{const}_1=\alpha_4,\, x^*_5(0)=\alpha_5,\]
\[x^*_3(0)=\frac{\alpha_2\alpha_5}{0+\alpha_4}+\mbox{const}_2=\alpha_3.\]
Hence, we get $const_2=\alpha_3-\frac{\alpha_2\alpha_5}{\alpha_4}$. Equation \eqref{X3G4} gives us
\[x^*_1=\alpha_1,x^*_2=\alpha_2, x^*_3=\frac{\alpha_2\alpha_5}{x^*_4}+\alpha_3-\frac{\alpha_2\alpha_5}{\alpha_4}, \]
\[ x^*_4=\alpha_4+u, x^*_5=\alpha_5, x^*=\alpha, y^*=\beta.\]
Therefore, the flow of $\X_3$ is
\[\hspace{1cm}\theta^{\X_3}_{x^*_4-\alpha_4}: F \mapsto \theta^{\X_3}_{x^*_4-\alpha_4}(F):=(\alpha_1,\alpha_2, \frac{\alpha_2\alpha_5}{x^*_4}+\alpha_3-\frac{\alpha_2\alpha_5}{\alpha_4},  x^*_4,\alpha_5, \alpha, \beta).\]
Next, we consider $\X_4:=\frac{x^*_2}{x^*_4}\frac{\partial}{\partial  x^*_3}+\frac{\partial}{\partial  x^*_5}$. Assume that
$$\varphi: s \mapsto \varphi(s)=(x^*_1(s), x^*_2(s), x^*_3(s), x^*_4(s), x^*_5(s), x^*(s), y^*(s))$$
be an integral curve of $\X_4$ from $F=\varphi(0)$, where $s \in (-\epsilon, \epsilon) \subset \R$ for some positive real number $\epsilon$. Then we have		
\begin{align}\label{X4G4}
\hspace{1cm}\varphi'(s)={\X_4}_{\varphi(s)}\nonumber
\Leftrightarrow & \sum_{i=1}^{5}x^{*'}_i(s)\frac{\partial}{\partial x^*_i}+x^{*'}(s)\frac{\partial}{\partial x^*}+y^{*'}(s)\frac{\partial}{\partial y^*}=\frac{x^*_2(s)}{x^*_4(s)}\frac{\partial}{\partial  x^*_3}+\frac{\partial}{\partial  x^*_5}\nonumber\\
\Leftrightarrow &  \begin{cases}
x^{*'}_1(s)= x^{*'}_2(s)= x^{*'}_4(s)=x^{*'}(s)= y^{*'}(s)=0\\
x^{*'}_3(s)=\frac{x^{*}_2(s)}{x^{*}_4(s)}\\
x^{*'}_5(s)=1
\end{cases} \nonumber\\
\Leftrightarrow &  \begin{cases}
x^*_1,  x^*_2, x^*_4, x^*, y^* \mbox{ are constant functions}\\
x^{*}_3(s)=\frac{x^{*}_2}{x^{*}_4}s + \const\\ x^{*}_5(s)=s+\const. \tag{15}
\end{cases}
\end{align}

Combining with condition $F=\varphi(0)$, Equation \eqref{X4G4} gives us
\begin{align*}
&x^*_1=\alpha_1,\; x^*_2=\alpha_2,\; x^*_3=\alpha_3+\frac{\alpha_2}{\alpha_4}s, \;x^*_4=\alpha_4,\; x^*_5=\alpha_5+s, \;x^*=\alpha, y^*=\beta.
\end{align*}
Hence, the flow of $\X_4$ is
\begin{align*}
\hspace{1cm}\theta^{\X_4}_{x^*_5-\alpha_5}: F \mapsto \theta^{\X_4}_{x^*_5-\alpha_5}(F):=(\alpha_1, \alpha_2, \frac{\alpha_2x^*_5}{\alpha_4}+\alpha_3-\frac{\alpha_2\alpha_5}{\alpha_4}, \alpha_4, x^*_5, \alpha, \beta).
\end{align*}
Finally, we set $\theta=\theta^{\X_6}_{y^*-\beta}\circ\theta^{\X_5}_{x^*-\alpha}\circ\theta^{\X_4}_{x^*_5-\alpha_5}\circ\theta^{\X_3}_{x^*_4-\alpha_4}\circ\theta^{\X_2}_{x^*_2-\alpha_2}\circ\theta^{\X_1}_{x^*_1-\alpha_1}$. From here~infer
\begin{align*}
\theta(F)=&\theta^{\X_6}_{y^*-\beta}\circ\theta^{\X_5}_{x^*-\alpha}\circ\theta^{\X_4}_{x^*_5-\alpha_5}\circ\theta^{\X_3}_{x^*_4-\alpha_4}\circ\theta^{\X_2}_{x^*_2-\alpha_2}\circ\theta^{\X_1}_{x^*_1-\alpha_1}(F)\\
=&(x^*_1, x^*_2, x^*_3, x^*_4, x^*_5, x^*, y^*)
\end{align*}
where $  x^*_1, x^*, y^* \in \R$ and  
$$x^*_3=\alpha_3+\frac{x^*_2x^*_5}{x^*_4}-\frac{\alpha_2\alpha_5}{\alpha_4} \Leftrightarrow x^*_3-\frac{x^*_2 x^*_5}{x^*_4} = \frac{\alpha_3\alpha_4-\alpha_2\alpha_5}{\alpha_4}.$$
Therefore, $\{\theta(F) : x^*_1, x^*_2, x^*_4, x^*_5, x^*, y^* \in \R; \alpha_4x^*_4>0; \alpha_5x^*_5>0 \}\equiv\Omega_F$, i.e. $\Omega_F$ is a maximal connected integrable submanifold corresponding to $S_{G^{00}_4}$. Thus $S_{G^{00}_4}$ generate $\mathcal{F}_{G^{00}_4}$. In other words, $(V,\mathcal{F}_{G^{00}_4})$ is a 6-dimensional foliation.

We now turn to the second step of the proof. Namely, we have to show that the foliation $(V,\mathcal{F}_{G^{00}_4})$ is measurable in the sense of Connes. To prove that $(V, \mathcal{F}_{G^{00}_4})$ is measurable, we only need to choose some suitable pair ($\X, \mu$) on $V$ where $\X$ is some smooth 6-vector field defined on $V$, $\mu$ is some measure on $V$ such that $\X$ generates $S_{G^{00}_4}$ and $\mu$ is $\X$-invariant. 
Namely, we choose $\mu$ to be exactly the Lebegues measure on $V$ and set $\X: = \X_1 \wedge \X_2 \wedge \X_3 \wedge \X_4 \wedge \X_5 \wedge \X_6$. 
Clearly, $\X$ is smooth, non-zero everywhere on $V$ and it is exactly a polyvector field of degree 6. Moreover $\X$ generates $S_{G^{00}_4}$. That is, when choosing on $(V, \mathcal{F}_{G^{00}_4})$ a suitable orientation, then $\X \in C^{\infty}{\bigl({\Lambda}^{6}(\mathcal{F})\bigr)}^{+}$. It is obvious that the invariance of the Lebegues measure $\mu$ with respect to $\X$ is equivalent to the invariance of $\mu$ for the $K$-representation that is restricted to the foliated submanifold $V$ in $\G^{00*}_4$. For any $U(x_1, x_2, x_3, x_4, x_5, x, y) \in \G^{00}_4$, direct computation show that Jacobi's determinant
$J_U$ of differential mapping $K\bigl(\exp_{G^{00}_4}(U)\bigr)$ is a constant which depends only on U but do not depend on the coordinates of any point which moves in each $K$-orbit $\Omega \in \mathcal{F}_{G^{00}_4}$. In other words, the Lebegues measure $\mu$ is $\X$-invariant. The proof is complete for the case $G = G^{00}_4$.

\item For $G = G_9$.  For any $F(\alpha_1,\alpha_2, \alpha_3, \alpha_4, ,\alpha_5,\alpha,\beta) \in \G^*_9$, by Theorem \ref{picture K-orbit}, the $K$-orbits $\Omega_F$ belongs to $\mathcal{F}_{G_9}$ if and only if $\alpha_4\alpha_5\neq0$. Moreover we have 
\[\hspace{1cm}\begin{array}{r}
\Omega_F=\{(x^*_1, x^*_2, x^*_3, x^*_4, x^*_5, x^*, y^*) \in \G^*_9: \frac{x^*_2}{x^*_4} - \frac{x^*_3}{x^*_5} + \ln\vert x^*_4 \vert =  \frac{\alpha_2}{\alpha_4} - \frac{\alpha_3}{\alpha_5} + \ln\vert \alpha_4 \vert, \\ \alpha_4 x^*_4 > 0, \, \alpha_5 x^*_5 >0 \}.\end{array}\]

Now, on the open subset $V$ ($\subset\G^*_9$) with natural differential structure, we consider the differential system $S_{G_9} = \{\X_1, \X_2, \X_3, \X_4, \X_5, \X_6\}$ defined as~follows
\[\hspace{1cm}S_{G_9}:\begin{cases}\begin{array}{lllllll}
\X_1:=\frac{\partial}{\partial x^*_1}& & & & & &\\
\X_2:=& & x^*_3\frac{\partial}{\partial  x^*_3}&+x^*_5\frac{\partial}{\partial  x^*_5}& & &\\
\X_3:=&x^*_2\frac{\partial}{\partial  x^*_2} &+x^*_5\frac{\partial}{\partial  x^*_3}& +x^*_4\frac{\partial}{\partial  x^*_4} && &\\
\X_4:=&x^*_4 \frac{\partial}{\partial  x^*_2} &+x^*_5\frac{\partial}{\partial  x^*_3}& & & &  \\
\X_5:= & & & & &\frac{\partial}{\partial x^*}& \\
\X_6:=& & & & & &\frac{\partial}{\partial y^*}.
\end{array}
\end{cases} \]
The definite matrix of $S_{G_9}$ is
\[M=\begin{bmatrix}
1 & & & & &  \\
& &x^*_2&x^*_4& & \\
& x^*_3& x^*_5&x^*_5 & & & \\
& & x^*_4 & & & \\
&x^*_5 & && &  \\
& & & & 1& \\
& & & & &  1\\
\end{bmatrix}.\]
Clearly, $\Rank(S_{G_9}) = \Rank(M)=6$. Furthermore, $\X_i$ is smooth all over $V$, $i=\overline{1,6}$. Now, we will show that $S_{{G_9}}$ produces $\mathcal{F}_{{G_9}}$, i.e. show that each K-orbit $\Omega$ from ${\mathcal{F}}_{{G_9}}$ is a maximal connected integrable submanifold corresponding to~$S_{{G_9}}$.

Firstly, we consider three vector fields $\X_1, \X_5, \X_6$. Obviously, the flows of $\X_1, \X_5$ and $\X_6$ are determined as follows
\begin{align*}
\theta^{\X_1}_{x^*_1-\alpha_1}: F \mapsto \theta^{\X_1}_{x^*_1-\alpha_1}(F):=(x^*_1, \alpha_2,\alpha_3, \alpha_4, \alpha_5, \alpha, \beta)\\
\theta^{\X_5}_{x^*-\alpha}: F \mapsto \theta^{\X_5}_{x^*-\alpha}(F):=(\alpha_1, \alpha_2,\alpha_3, \alpha_4, \alpha_5, x^*, \beta)\\
\theta^{\X_6}_{y^*-\beta}: F \mapsto \theta^{\X_6}_{y^*-\beta}(F):=(\alpha_1, \alpha_2,\alpha_3, \alpha_4, \alpha_5, \alpha, y^*).
\end{align*}

Next, we consider $\X_2:=x^*_3\frac{\partial}{\partial  x^*_3}+x^*_5\frac{\partial}{\partial  x^*_5}$. Assume that 

$$\varphi: x \mapsto \varphi(x)=(x^*_1(x), x^*_2(x), x^*_3(x), x^*_4(x), x^*_5(x), x^*(x), y^*(x))$$
be an integral curve of $\X_2$ from $F=\varphi(0)$, where $x \in (-\epsilon, \epsilon) \subset \R$ for some positive real number $\epsilon$. Then we have
\begin{align}\label{G9X2}
\hspace{0.5cm}	\varphi'(x)={\X_2}_{\varphi(x)}
\Leftrightarrow & \sum_{i=1}^{5}x^{*'}_i(x)\frac{\partial}{\partial x^*_i}+x^{*'}(x)\frac{\partial}{\partial x^*}+y^{*'}(x)\frac{\partial}{\partial y^*}=x^*_3(x)\frac{\partial}{\partial  x^*_3}+x^*_5(x)\frac{\partial}{\partial  x^*_5}\nonumber\\
\Leftrightarrow &  \begin{cases}
x^{*'}_1(x)= x^{*'}_2(x)= x^{*'}_4(x)=x^{*'}(x)= y^{*'}(x)=0\\
x^{*'}_3(x)=x^*_3(x)\\
x^{*'}_5(x)=x^*_5(x).\tag{16}
\end{cases}
\end{align}
Combining with condition $F=\varphi(0)$, Equation \eqref{G9X2} gives us
\begin{equation}\label{X2}
\hspace{0.5cm}x^*_1=\alpha_1,x^*_2= \alpha_2, x^*_3=\alpha_3e^x, x^*_4=\alpha_4, x^*_5=\alpha_5e^x, x^*=\alpha, y^*=\beta.\tag{17}
\end{equation}
Similarly, we consider $\X_3:=x^*_2\frac{\partial}{\partial  x^*_2} +x^*_5\frac{\partial}{\partial  x^*_3} +x^*_4\frac{\partial}{\partial  x^*_4}$. Assume that
$$\varphi: y \mapsto \varphi(y)=(x^*_1(y), x^*_2(y), x^*_3(y), x^*_4(y), x^*_5(y), x^*(y), y^*(y))$$
be an integral curve of $\X_3$ from $F=\varphi(0)$, where $y \in (-\epsilon, \epsilon) \subset \R$ for some positive real number $\epsilon$. Then we have
\begin{align}\label{G9X3}
\varphi'(y)={\X_3}_{\varphi(y)} \Leftrightarrow & \sum_{i=1}^{5}x^{*'}_i(y)\frac{\partial}{\partial x^*_i}+x^{*'}(y)\frac{\partial}{\partial x^*}+y^{*'}(y)\frac{\partial}{\partial y^*}=\nonumber\\ &\hspace{1.0cm}=x^*_2(y)\frac{\partial}{\partial  x^*_2} +x^*_5(y)\frac{\partial}{\partial  x^*_3} +x^*_4(y)\frac{\partial}{\partial  x^*_4}\nonumber\\
\Leftrightarrow &  \begin{cases}
x^{*'}_1(y)= x^{*'}_5(y)= x^{*'}(y)= y^{*'}(y)=0\\
x^{*'}_2(y)=x^*_2(y)\\
x^{*'}_3(y)=x^*_5(y)\\
x^{*'}_4(y)=x^*_4(y).\tag{18}
\end{cases}
\end{align}
Combining with condition $F=\varphi(0)$, Equation \eqref{G9X3} gives us
\begin{align}\label{X3}
&x^*_1=\alpha_1,\; x^*_2= \alpha_2e^y,\;  x^*_3=\alpha_5y+\alpha_3,\nonumber\\
&x^*_4=\alpha_4e^y,\; x^*_5=\alpha_5,\; x^*=\alpha, y^*=\beta. \tag{19}
\end{align}
Similarly, we consider $\X_4:=x^*_4 \frac{\partial}{\partial  x^*_2} +x^*_5\frac{\partial}{\partial  x^*_3}$. Assume that
$$\varphi: x_1 \mapsto \varphi(x_1)=(x^*_1(x_1), x^*_2(x_1), x^*_3(x_1), x^*_4(x_1), x^*_5(x_1), x^*(x_1), y^*(x_1))$$
be an integral curve of $\X_4$ from $F=\varphi(0)$, where $x_1 \in (-\epsilon, \epsilon) \subset \R$ for some positive real number $\epsilon$. Then we have
\begin{align}\label{G9X4}
\varphi'(x_1)={\X_4}_{\varphi(x_1)} \Leftrightarrow & \sum_{i=1}^{5}x^{*'}_i(x_1)\frac{\partial}{\partial x^*_i}+x^{*'}(x_1)\frac{\partial}{\partial x^*}+y^{*'}(x_1)\frac{\partial}{\partial y^*}=x^*_4(x_1) \frac{\partial}{\partial  x^*_2} +x^*_5(x_1)\frac{\partial}{\partial  x^*_3}\nonumber\\
\Leftrightarrow &  \begin{cases}
x^{*'}_1(x_1)= x^{*'}_4(x_1)=x^{*'}_5(x_1)= x^{*'}(x_1)= y^{*'}(x_1)=0\\
x^{*'}_2(x_1)=x^*_4(x_1)\\
x^{*'}_3(x_1)=x^*_5(x_1).\tag{20}
\end{cases} 
\end{align}
Combining with condition $F=\varphi(0)$, Equation \eqref{G9X4} gives us
\begin{align}\label{X4}
&x^*_1=\alpha_1,\; x^*_2= \alpha_2 +\alpha_4x_1 ,\; x^*_3=\alpha_3+\alpha_5x_1, \nonumber\\
&x^*_4=\alpha_4 , \; x^*_5=\alpha_5, \; x^*=\alpha,\; y^*=\beta.\tag{21}
\end{align}
According to (\ref{X2}), (\ref{X3}), (\ref{X4}), the flows of $\X_2, \X_3, \X_4$ are determined as follows
\begin{align*}
&\theta^{\X_2}_{x}: F \mapsto \theta^{\X_2}_{x}(F):=(\alpha_1, \alpha_2,\alpha_3e^x, \alpha_4, \alpha_5e^x, \alpha, \beta),\\
&\theta^{\X_3}_{y}: F \mapsto \theta^{\X_3}_{y}(F):=(\alpha_1, \alpha_2e^y,\alpha_3+\alpha_5y, \alpha_4e^y, \alpha_5, \alpha, \beta),\\
&\theta^{\X_4}_{x_1}: F \mapsto \theta^{\X_4}_{x_1}(F):=(\alpha_1, \alpha_2+\alpha_4x_1,\alpha_3+\alpha_5x_1, \alpha_4, \alpha_5, \alpha, \beta).
\end{align*}
Finally, we set $\theta=\theta^{\X_6}_{y^*-\beta}\circ\theta^{\X_5}_{x^*-\alpha}\circ\theta^{\X_4}_{x_1}\circ\theta^{\X_3}_{y}\circ\theta^{\X_2}_{x}\circ\theta^{\X_1}_{x^*_1-\alpha_1}$. From here~infer
\begin{align*}
\theta(F)=&\theta^{\X_6}_{y^*-\beta}\circ\theta^{\X_5}_{x^*-\alpha}\circ\theta^{\X_4}_{x_1}\circ\theta^{\X_3}_{y}\circ\theta^{\X_2}_{x}\circ\theta^{\X_1}_{x^*_1-\alpha_1}(F)\\
=&(x^*_1, x^*_2, x^*_3, x^*_4, x^*_5, x^*, y^*)
\end{align*}
where 
\begin{align*}
&x^*_2= \alpha_2e^y+\alpha_4e^yx_1,\; x^*_3= \alpha_3e^x+\alpha_5(y+x_1)e^x, \\
&x^*_4= \alpha_4e^y,\; x^*_5= \alpha_5e^x, \; x^*_1, x^*,y^* \in \R.
\end{align*} 
By direct calculation, we get 
\[\frac{x^*_2}{x^*_4} - \frac{x^*_3}{x^*_5} + \ln\vert x^*_4 \vert = \frac{\alpha_2}{\alpha_4} - \frac{\alpha_3}{\alpha_5} + \ln\vert \alpha_4 \vert.\]
Hence $\{\theta(F) : x^*_1, x_1, x, y, x^*, y^* \in \R; \alpha_4x^*_4>0; \alpha_5x^*_5>0 \}\equiv\Omega_F$, i.e. $\Omega_F$ is a maximal connected integrable submanifold corresponding to $S_{G_9}$. Thus $S_{G_9}$ generate $\mathcal{F}_{G_9}$. In other words, $(V,\mathcal{F}_{G_9})$ is a 6-dimensional foliation.

We now turn to the second step of the proof. Namely, we have to show that the foliation $(V,\mathcal{F}_{G_9})$ is measurable in the sense of Connes. To prove that $(V, \mathcal{F}_{G_9})$ is measurable, we only need to choose some suitable pair ($\X, \mu$) on $V$ where $\X$ is some smooth 6-vector field defined on $V$, $\mu$ is some measure on $V$ such that $\X$ generates $S_{G_9}$ and $\mu$ is $\X$-invariant. 
Namely, we choose $\mu$ to be exactly the Lebegues measure on $V$ and set $\X: = \X_1 \wedge \X_2 \wedge \X_3 \wedge \X_4 \wedge \X_5 \wedge \X_6$.
Clearly, $\X$ is smooth, non-zero everywhere on $V$ and it is exactly a polyvector field of degree 6. Moreover $\X$ generates $S_{G_9}$. That is, when choosing on $(V, \mathcal{F}_{G_9})$ a suitable orientation, then $\X \in C^{\infty}{\bigl({\Lambda}^{6}(\mathcal{F})\bigr)}^{+}$. It is obvious that the invariance of the Lebegues measure $\mu$ with respect to $\X$ is equivalent to the invariance of $\mu$ for the $K$-representation that is restricted to the foliated submanifold $V$ in $\G_9^*$. For any $U(x_1, x_2, x_3, x_4, x_5, x, y) \in \G_9$, direct computation show that Jacobi's determinant
$J_U$ of differential mapping $K\bigl(\exp_{G_9}(U)\bigr)$ is a constant which depends only on U but do not depend on the coordinates of any point which moves in each $K$-orbit $\Omega \in \mathcal{F}_{G_9}$. In other words, the Lebegues measure $\mu$ is $\X$-invariant. The proof is complete for the case $G = G_9$. 

\item For $G = G^{\lambda}_{10}$.  For any $F(\alpha_1,\alpha_2, \alpha_3, \alpha_4, ,\alpha_5,\alpha,\beta) \in {\G^{\lambda}_{10}}^*$, by Theorem \ref{picture K-orbit}, the $K$-orbits $\Omega_F$ belongs to $\mathcal{F}_{G^{\lambda}_{10}}$ if and only if $\alpha_4\alpha_5\neq0$. Moreover we have 
\[\begin{array}{r}
\Omega_F=\{(x^*_1, x^*_2, x^*_3, x^*_4, x^*_5, x^*, y^*) \in \G^{\lambda*}_{10}: \frac{x^*_2}{x^*_4} - \frac{x^*_3}{x^*_5} + \ln \frac{\vert x^*_5 \vert}{{\vert x^*_4 \vert}^{\lambda}}   =\frac{\alpha_2}{\alpha_4} - \frac{\alpha_3}{\alpha_5} + \ln \frac{\vert \alpha_5 \vert}{{\vert \alpha_4 \vert}^{\lambda}},\\ \alpha_4 x^*_4 > 0, \, \alpha_5 x^*_5 >0 \}.\end{array}\]

On the open subset $V$ ($\subset\G^{\lambda *}_{10}$) with natural differential structure, we consider the following differential system $S_{G^{\lambda}_{10}} = \{\X_1, \X_2, \X_3, \X_4, \X_5, \X_6\}$ 
\[\hspace{1cm}\begin{cases}\begin{array}{lllllll}
\X_1:=\frac{\partial}{\partial x^*_1}& & & & & &\\
\X_2:=& & (x^*_3+x^*_5)\frac{\partial}{\partial  x^*_3}& &+x^*_5\frac{\partial}{\partial  x^*_5} & &\\
\X_3:=&x^*_2\frac{\partial}{\partial  x^*_2} &+\lambda x^*_3\frac{\partial}{\partial  x^*_3}& +x^*_4\frac{\partial}{\partial  x^*_4} &+\lambda x^*_5\frac{\partial}{\partial  x^*_5} & &\\
\X_4:=&x^*_4 \frac{\partial}{\partial  x^*_2} &+x^*_5\frac{\partial}{\partial  x^*_3}& & & &  \\
\X_5:= & & & & &\frac{\partial}{\partial x^*}& \\
\X_6:=& & & & & &\frac{\partial}{\partial y^*}.
\end{array}
\end{cases}  \]
The definite matrix of $S_{G^{\lambda}_{10}}$ is
\[M=\begin{bmatrix}
1 & & & & & \\
& & x^*_2& x^*_4& & \\
& x^*_3+x^*_5&\lambda x^*_3 &x^*_5 & & &  \\
& & x^*_4 & & & & \\
&x^*_5 &\lambda x^*_5 & & &  \\
& & &  & 1& \\
& & & & &  1\\
\end{bmatrix}.\]
Obviously, $\Rank(S_{G^{\lambda}_{10}}) = \Rank(M)=6$. Furthermore, $\X_i$ is smooth all over $V$, $i=\overline{1,6}$. Now, we will show that $S_{G^{\lambda}_{10}}$ produces $\mathcal{F}_{G^{\lambda}_{10}}$, i.e. show that each K-orbit $\Omega$ from ${\mathcal{F}}_{G^{\lambda}_{10}}$ is a maximal connected integrable submanifold corresponding to~$S_{G^{\lambda}_{10}}$.

Firstly, we consider three vector fields  $\X_1, \X_5, \X_6$. Obviously, the flows of $\X_1, \X_5$ and $\X_6$ are
\begin{align*}
\theta^{\X_1}_{x^*_1-\alpha_1}: F \mapsto \theta^{\X_1}_{x^*_1-\alpha_1}(F):=(x^*_1, \alpha_2,\alpha_3, \alpha_4, \alpha_5, \alpha, \beta)\\
\theta^{\X_5}_{x^*-\alpha}: F \mapsto \theta^{\X_5}_{x^*-\alpha}(F):=(\alpha_1, \alpha_2,\alpha_3, \alpha_4, \alpha_5, x^*, \beta)\\
\theta^{\X_6}_{y^*-\beta}: F \mapsto \theta^{\X_6}_{y^*-\beta}(F):=(\alpha_1, \alpha_2,\alpha_3, \alpha_4, \alpha_5, \alpha, y^*).
\end{align*}

Next, we consider $\X_2:= (x^*_3+x^*_5)\frac{\partial}{\partial  x^*_3} +x^*_5\frac{\partial}{\partial  x^*_5}$. Assume that 

$$\varphi: x \mapsto \varphi(x)=(x^*_1(x), x^*_2(x), x^*_3(x), x^*_4(x), x^*_5(x), x^*(x), y^*(x))$$
be an integral curve of $\X_2$ from $F=\varphi(0)$, where $x \in (-\epsilon, \epsilon) \subset \R$ for some positive real number $\epsilon$. Then we have
\begin{align}\label{G10X2}
\hspace{0.5cm}	\varphi'(x)={\X_2}_{\varphi(x)} \Leftrightarrow & \sum_{i=1}^{5}x^{*'}_i(x)\frac{\partial}{\partial x^*_i}+x^{*'}(x)\frac{\partial}{\partial x^*}+y^{*'}(x)\frac{\partial}{\partial y^*}= \nonumber\\
&\hspace{1.5cm}=(x^*_3(x)+x^*_5(x))\frac{\partial}{\partial  x^*_3} +x^*_5(x)\frac{\partial}{\partial  x^*_5} \nonumber\\
\Leftrightarrow &  \begin{cases}
x^{*'}_1(x)= x^{*'}_2(x)= x^{*'}_4(x)=x^{*'}(x)= y^{*'}(x)=0\\
x^{*'}_3(x)=x^*_3(x)+x^*_5(x)\\
x^{*'}_5(x)=x^*_5(x). \tag{22}
\end{cases} 
\end{align}
Combining with condition $F=\varphi(0)$, Equation \eqref{G10X2} gives us
\begin{align}\label{X2G10}
& x^*_1=\alpha_1, \; x^*_2= \alpha_2,\; x^*_3=\alpha_3e^x+\alpha_5e^x x, \nonumber\\
&x^*_4=\alpha_4,\; x^*_5=\alpha_5e^x,\; x^*=\alpha,\; y^*=\beta.\tag{23}
\end{align}
Similarly, we consider $\X_3:=x^*_2\frac{\partial}{\partial  x^*_2} +\lambda x^*_3\frac{\partial}{\partial  x^*_3} +x^*_4\frac{\partial}{\partial  x^*_4} +\lambda x^*_5\frac{\partial}{\partial  x^*_5}$

$\varphi: y \mapsto \varphi(y)=(x^*_1(y), x^*_2(y), x^*_3(y), x^*_4(y), x^*_5(y), x^*(y), y^*(y))$ be an integral curve of $\X_3$ from $F=\varphi(0)$, where $y \in (-\epsilon, \epsilon) \subset \R$ for some positive real number $\epsilon$. Then we have
\begin{align}\label{G10X3}
\hspace{0.5cm}\varphi'(y)={\X_3}_{\varphi(y)}\nonumber \Leftrightarrow & \sum_{i=1}^{5}x^{*'}_i(y)\frac{\partial}{\partial x^*_i}+x^{*'}(y)\frac{\partial}{\partial x^*}+y^{*'}(y)\frac{\partial}{\partial y^*}= \nonumber\\
&=x^*_2(y)\frac{\partial}{\partial  x^*_2} +\lambda x^*_3(y)\frac{\partial}{\partial  x^*_3} +x^*_4(y)\frac{\partial}{\partial  x^*_4} +\lambda x^*_5(y)\frac{\partial}{\partial  x^*_5}\nonumber\\
\Leftrightarrow &  \begin{cases}
x^{*'}_1(y)= x^{*'}(y)= y^{*'}(y)=0\\
x^{*'}_2(y)=x^*_2(y)\\
x^{*'}_3(y)=\lambda x^*_3(y)\\
x^{*'}_4(y)=x^*_4(y)\\
x^{*'}_5(y)=\lambda x^*_5(y). \tag{24}
\end{cases} 
\end{align}
Combining with condition $F=\varphi(0)$, Equation \eqref{G10X3} gives us
\begin{align}\label{X3G10}
&x^*_1=\alpha_1, \;x^*_2= \alpha_2e^y,\; x^*_3=\alpha_3e^{\lambda y},\; x^*_4=\alpha_4e^y,\nonumber\\
& x^*_5=\alpha_5e^{\lambda y}, \;x^*=\alpha, \;y^*=\beta. \tag{25}
\end{align}
Similarly, we consider $\X_4:=x^*_4\frac{\partial}{\partial  x^*_2} + x^*_5\frac{\partial}{\partial  x^*_3}$

$\varphi: x_1 \mapsto \varphi(x_1)=(x^*_1(x_1), x^*_2(x_1), x^*_3(x_1), x^*_4(x_1), x^*_5(x_1), x^*(x_1), y^*(x_1))$ be an integral curve of $\X_4$ from $F=\varphi(0)$, where $x_1 \in (-\epsilon, \epsilon) \subset \R$ for some positive real number $\epsilon$. Then we have
\begin{align}\label{G10X4}
\varphi'(x_1)={\X_4}_{\varphi(x_1)} \Leftrightarrow & \sum_{i=1}^{5}x^{*'}_i(x_1)\frac{\partial}{\partial x^*_i}+x^{*'}(x_1)\frac{\partial}{\partial x^*}+y^{*'}(x_1)\frac{\partial}{\partial y^*} =x^*_4(x_1)\frac{\partial}{\partial  x^*_2} + x^*_5(x_1)\frac{\partial}{\partial  x^*_3}\nonumber\\
\Leftrightarrow &  \begin{cases}
x^{*'}_1(x_1)=x^{*'}_4(x_1)=x^{*'}_5(x_1)= x^{*'}(x_1)= y^{*'}(x_1)=0\\
x^{*'}_2(x_1)=x^*_4(x_1)\\
x^{*'}_3(x_1)=x^*_5(x_1). \tag{26}
\end{cases} 
\end{align}
Combining with condition $F=\varphi(0)$, Equation \eqref{G10X4} gives us
\begin{align}\label{X4G10}
&x^*_1=\alpha_1,\; x^*_2= \alpha_2+\alpha_4x_1,\;  x^*_3=\alpha_3+\alpha_5x_1,\nonumber\\
& x^*_4=\alpha_4,\; x^*_5=\alpha_5,\; x^*=\alpha,\; y^*=\beta.\tag{27}
\end{align}
By (\ref{X2G10}), (\ref{X3G10}), (\ref{X4G10}), the flows of $\X_2, \X_3$ and  $\X_4$ are
\begin{align*}
&\theta^{\X_2}_{x}: F \mapsto \theta^{\X_2}_{x}(F):=(\alpha_1, \alpha_2,\alpha_3e^x+\alpha_5e^x x, \alpha_4, \alpha_5e^x, \alpha, \beta)\\
&\theta^{\X_3}_{y}: F \mapsto \theta^{\X_3}_{y}(F):=(\alpha_1, \alpha_2e^y,\alpha_3e^{\lambda y}, \alpha_4e^y, \alpha_5e^{\lambda y}, \alpha, \beta)\\
&\theta^{\X_4}_{x_1}: F \mapsto \theta^{\X_4}_{x_1}(F):=(\alpha_1, \alpha_2+\alpha_4x_1,\alpha_3+\alpha_5x_1, \alpha_4, \alpha_5, \alpha, \beta).
\end{align*}
Finally, we set $\theta=\theta^{\X_6}_{y^*-\beta}\circ\theta^{\X_5}_{x^*-\alpha}\circ\theta^{\X_4}_{x_1}\circ\theta^{\X_3}_{y}\circ\theta^{\X_2}_{x}\circ\theta^{\X_1}_{x^*_1-\alpha_1}$. From here~infer
\begin{align*}
\theta(F)=&\theta^{\X_6}_{y^*-\beta}\circ\theta^{\X_5}_{x^*-\alpha}\circ\theta^{\X_4}_{x_1}\circ\theta^{\X_3}_{y}\circ\theta^{\X_2}_{x}\circ\theta^{\X_1}_{x^*_1-\alpha_1}(F)\\
=&(x^*_1, x^*_2, x^*_3, x^*_4, x^*_5, x^*, y^*)
\end{align*}
where 
\begin{align*}
&x^*_2= \alpha_2e^y+\alpha_4e^yx_1, \; x^*_3= \alpha_3e^{\lambda y+x}+\alpha_5(x+x_1)e^{\lambda y+x},\\
&x^*_4= \alpha_4e^y,\;  x^*_5= \alpha_5e^{\lambda y+x}, \; x^*_1, x^*, y^* \in \R.
\end{align*}
By direct calculation, we get 
\[\frac{x^*_2}{x^*_4} - \frac{x^*_3}{x^*_5} + \ln \frac{\vert x^*_5 \vert}{{\vert x^*_4 \vert}^{\lambda}}  =  \frac{\alpha_2}{\alpha_4} - \frac{\alpha_3}{\alpha_5} + \ln \frac{\vert \alpha_5 \vert}{{\vert \alpha_4 \vert}^{\lambda}}.\]
Hence $\{\theta(F): x^*_1, x_1, x, y, x^*, y^* \in \R; \alpha_4x^*_4>0, \alpha_5x^*_5>0 \}\equiv\Omega_F$, i.e. $\Omega_F$ is a maximal connected integrable submanifold corresponding to $S_{G^{\lambda}_{10}}$. Thus $S_{G^{\lambda}_{10}}$ generate $\mathcal{F}_{G^{\lambda}_{10}}$. In other words, $(V,\mathcal{F}_{G^{\lambda}_{10}})$ is a 6-dimensional foliation.

We now turn to the second step of the proof. Namely, we have to show that the foliation $(V,\mathcal{F}_{G^{\lambda}_{10}})$ is measurable in the sense of Connes. To prove that $(V, \mathcal{F}_{G^{\lambda}_{10}})$ is measurable, we only need to choose some suitable pair ($\X, \mu$) on $V$ where $\X$ is some smooth 6-vector field defined on $V$, $\mu$ is some measure on $V$ such that $\X$ generates $S_{G^{\lambda}_{10}}$ and $\mu$ is $\X$-invariant. 
Namely, we choose $\mu$ to be exactly the Lebegues measure on $V$ and set $\X: = \X_1 \wedge \X_2 \wedge \X_3 \wedge \X_4 \wedge \X_5 \wedge \X_6$.
Clearly, $\X$ is smooth, non-zero everywhere on $V$ and it is exactly a polyvector field of degree 6. Moreover $\X$ generates $S_{G^{\lambda}_{10}}$. That is, when choosing on $(V, \mathcal{F}_{G^{\lambda}_{10}})$ a suitable orientation, then $\X \in C^{\infty}{\bigl({\Lambda}^{6}(\mathcal{F})\bigr)}^{+}$. It is obvious that the invariance of the Lebegues measure $\mu$ with respect to $\X$ is equivalent to the invariance of $\mu$ for the $K$-representation that is restricted to the foliated submanifold $V$ in $\G^{\lambda*}_{10}$. For any $U(x_1, x_2, x_3, x_4, x_5, x, y) \in \G^{\lambda}_{10}$, direct computation show that Jacobi's determinant
$J_U$ of differential mapping $K\bigl(\exp_{G^{\lambda}_{10}}(U)\bigr)$ is a constant which depends only on U but do not depend on the coordinates of any point which moves in each $K$-orbit $\Omega \in \mathcal{F}_{G^{\lambda}_{10}}$. In other words, the Lebegues measure $\mu$ is $\X$-invariant. The proof is complete for the case $G = G^{\lambda}_{10}$. 
\end{enumerate}	
The proof of the Theorem \ref{FormedFoliation} is complete.
 \end{proof}

We emphasize that, for any group $G \in \{G_{2}, G_{3},  G_4^{00}, G_{9}, G_{10}^{\lambda}\}$, the foliation $(V, \mathcal{F}_G)$ is established by maximal dimensional $K$-orbits of $G$ just like MD-foliations in \cite{V90, V93, VH09, VHT14}. Therefore, we give the following definition.
\vskip 0.5cm

\begin{defn} For any $G \in \{G_{2}, G_{3},  G_4^{00}, G_{9}, G_{10}^{\lambda}\}$, the foliation $(V, \mathcal{F}_G)$ is called the generalized MD-foliation ($GMD$-foliation for short) associated to~$G$.
\end{defn}

\begin{rem}\label{RemarkFoliation} It should be noted that for the set $\{G_{2}, G_{3},  G_4^{00}, G_{9}, G_{10}^{\lambda}\}$ above, by Theorem \ref{FormedFoliation}, we get exactly five families of measurable $GMD$-foliations on the same foliated manifold $V$ which is defined by Equality (\ref{FoliatedManifold}) in Item (vi) of Remark \ref{RemarkOrbits}. Namely, we have the set of $GMD$-foliations as follows
	\[\{(V,\mathcal{F}_{G_2}), (V,\mathcal{F}_{G_3}), (V,\mathcal{F}_{G_4^{00}}), (V,\mathcal{F}_{G_9}), (V,\mathcal{F}_{G_{10}^{\lambda}})\}.\]
	The topological classification of this set is given in the following theorem.
\end{rem}

\begin{thm}[Description and classification of topological types of $\boldsymbol{GMD}$-foliations]\label{TopoClass}
	The topology of  $GMD$-foliations has the following properties
	\begin{enumerate}[(1)]
		\item \label{TopoTypes} All $GMD$-foliations are topological equivalent. In other words, there exist only one topological type  families of $GMD$-foliations. We denote this type by $\mathcal{F}_{GMD}$.
	
		\item \label{Type} All $GMD$-foliations are trivial fibration with connected fibers on the discrete union of four copies of the real line $\R$. 
	\end{enumerate}
\end{thm}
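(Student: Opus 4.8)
The plan is to observe that, in each of the five cases, Theorem~\ref{picture K-orbit} exhibits $\mathcal{F}_G$, restricted to each connected component of $V$, as the family of level sets of a single smooth submersion $\pi_G\colon V\to\R$; explicitly,
\[
\pi_{G_2}=x^*_2-\frac{x^*_3x^*_4}{x^*_5},\qquad
\pi_{G^{00}_4}=x^*_3-\frac{x^*_2x^*_5}{x^*_4},\qquad
\pi_{G_3}=\frac{x^*_2}{x^*_4}-\frac{x^*_3}{x^*_5},
\]
\[
\pi_{G_9}=\frac{x^*_2}{x^*_4}-\frac{x^*_3}{x^*_5}+\ln|x^*_4|,\qquad
\pi_{G^{\lambda}_{10}}=\frac{x^*_2}{x^*_4}-\frac{x^*_3}{x^*_5}+\ln\frac{|x^*_5|}{|x^*_4|^{\lambda}}.
\]
Each $\pi_G$ is well defined and a submersion on $V$ (a suitable one of $\partial\pi_G/\partial x^*_2,\partial\pi_G/\partial x^*_3$ is everywhere nonzero there). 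Over each of the four clopen pieces $V_{++},V_{-+},V_{--},V_{+-}$ of $V$ from \eqref{FoliatedManifold}--\eqref{SS2}, a leaf of $\mathcal{F}_G$ is precisely $\pi_G^{-1}(c)$ intersected with that piece.

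First I would build, for each $G$ and each of these four pieces, an explicit diffeomorphism onto $\R^7$ carrying $\mathcal{F}_G$ to the standard codimension-one foliation of $\R^7=\R\times\R^6$ by the parallel hyperplanes $\{t\}\times\R^6$. The recipe is to use $\pi_G$ as the first coordinate and supplement it by $x^*_1$, by whichever of $x^*_2,x^*_3$ does \emph{not} occur in the numerators of $\pi_G$, by $\ln|x^*_4|$ and $\ln|x^*_5|$ (which open the half-lines $\R_{\pm}$ of \eqref{SS1}--\eqref{SS2} onto $\R$), and by $x^*$ and $y^*$; a one-line Jacobian computation shows that these seven functions form a global coordinate system on the piece with smooth $\R^7$-valued inverse. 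A leaf of $\mathcal{F}_G$ then becomes a slice $\{\pi_G=c\}$, hence is a graph over the six remaining coordinates and is connected and diffeomorphic to $\R^6$, in agreement with Remark~\ref{RemarkOrbits}~(i). Running over the four (open and closed) pieces, the quotient map $V\to V/\mathcal{F}_G$ becomes in these coordinates the disjoint union of four copies of the projection $\R\times\R^6\to\R$; thus $V/\mathcal{F}_G$ is the discrete union of four copies of $\R$ and $V\to V/\mathcal{F}_G$ is a trivial fibration with connected fibre $\R^6$. This is the second assertion of the theorem.

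For the first assertion I would assemble the twenty piecewise diffeomorphisms above into a single homeomorphism $\Phi_G\colon V\to\bigl(\R\sqcup\R\sqcup\R\sqcup\R\bigr)\times\R^6$, the four $\R$-summands corresponding to the four clopen pieces of $V$, sending each leaf of $\mathcal{F}_G$ onto a slice $\{w\}\times\R^6$; there is nothing to glue, since the four pieces of $V$ are already disjoint. Given two groups $G,G'$ in $\{G_2,G_3,G^{00}_4,G_9,G^{\lambda}_{10}\}$, the composition $h_{G,G'}:=\Phi_{G'}^{-1}\circ\Phi_G\colon V\to V$ is a homeomorphism (indeed a diffeomorphism) carrying every leaf of $\mathcal{F}_G$ onto a leaf of $\mathcal{F}_{G'}$, which is exactly the topological equivalence of Definition~\ref{ToPo}. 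Hence the five $GMD$-foliations all belong to one topological type $\mathcal{F}_{GMD}$, a representative being the product foliation of $(\R\sqcup\R\sqcup\R\sqcup\R)\times\R^6$.

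I do not anticipate a real obstacle: all the substantive content --- the level-set description of the generic orbits and their being connected graphs homeomorphic to $\R^6$ --- is already provided by Theorem~\ref{picture K-orbit} and Remark~\ref{RemarkOrbits}, and this mirrors the treatment of $MD$-foliations in \cite{V90, V93, VH09, VHT14}. The only point demanding care is the bookkeeping of the coordinate change: for each $\pi_G$ the complementary coordinates must be chosen so that $\pi_G$ together with the remaining six functions is a genuine change of variables (e.g.\ keeping $x^*_3$ rather than $x^*_2$ whenever $\pi_G$ involves $x^*_2/x^*_4$, and using $\ln|x^*_4|,\ln|x^*_5|$ in place of $x^*_4,x^*_5$ so that the image is all of $\R^7$), and the nonvanishing of the associated Jacobian on each of the four pieces must be checked; these are short, routine verifications.
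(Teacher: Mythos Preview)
Your proposal is correct, and the underlying content matches the paper's: both identify the leaves on each component of $V$ as the level sets of the explicit submersions $\pi_G$ you list, and use this to exhibit the foliation as a trivial fibration over $\R$ on each of the four clopen pieces. The difference is organizational. The paper proves Assertion~(\ref{TopoTypes}) by writing down four explicit pairwise homeomorphisms $h_1,h_2,h_3,h\colon V\to V$ --- for instance $h_2(v)=(x^*_1,\,x^*_2-x^*_4\ln|x^*_4|,\,x^*_3,\,x^*_4,\,x^*_5,\,x^*,\,y^*)$ carries $\mathcal{F}_{G_3}$ to $\mathcal{F}_{G_9}$ --- and then proves Assertion~(\ref{Type}) only for $(V,\mathcal{F}_{G_2})$ via the submersion $p=\pi_{G_2}$, invoking Assertion~(\ref{TopoTypes}) to transfer the conclusion. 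You instead factor every foliation through a single common model $(\R\sqcup\R\sqcup\R\sqcup\R)\times\R^6$ via the global charts $\Phi_G$, obtaining both assertions simultaneously and getting the pairwise equivalences as compositions $\Phi_{G'}^{-1}\circ\Phi_G$. Your route is a bit more systematic and avoids checking four ad~hoc maps one at a time; the paper's route has the virtue of producing very simple closed-form equivalences directly on $V$ (its $h,h_1,h_2,h_3$ are essentially what your compositions $\Phi_{G'}^{-1}\circ\Phi_G$ would yield after simplification).
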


\begin{proof} 	
	We will now prove two assertions of the theorem. 
	\begin{enumerate}[(a)]
		\item Firstly, we prove Assertion (\ref{TopoTypes}). 
		By Definition \ref{ToPo}, two foliations on the same foliated manifold $V$ are said to be topologically equivalent if there exists a homeomorphism $h$ of $V$ which sends leaves of the first foliation onto those of the second one. \\
		Let $h_1, h_2, h_3, h: V \rightarrow V$ be four maps defined as follows:
		\begin{align*}
		\hspace{1cm}&h_1(v) := (x^*_1,x^*_3, x^*_2, x^*_5, x^*_4, x^*, y^*),\\
		&h_2(v) := (x^*_1,x^*_2 - x^*_4 \ln \vert x^*_4 \vert, x^*_3, x^*_4, x^*_5, x^*, y^*),\\
		&h_3(v) := (x^*_1,x^*_2 + \lambda x^*_4 \ln \vert x^*_4 \vert, x^*_3 + x^*_5 \ln \vert x^*_5 \vert, x^*_4, x^*_5, x^*, y^*),\\
		&h(v) := (x^*_1,x^*_2 x^* _4, x^*_3, x^* _4, \frac{x^* _5}{x^*_4}, x^*, y^*)
		\end{align*}
		for any $v: = (x^*_1,x^*_2, x^*_3, x^*_4, x^*_5, x^*, y^*) \in V$. It is clear that the considered maps are homeomorphic. 
		
		\begin{itemize}
		\item Firstly, we take an arbitrary leaf $L$ of $(V, \mathcal{F}_{G_2})$. Without loss of generality, assume $L \subset V_{++} \subset V$ (see Equalities (\ref{SS1}) and (\ref{SS2}) in Item (vi) of Remark \ref{RemarkOrbits}), i.e. $L$ is determined as follows
		\[L  = \{v \in V: x^*_2-\frac{x^*_3 x^*_4}{x^*_5} = c; \, \, x^*_4 > 0, \, x^*_5 > 0\}\]
		where $c$ is some constant, $c \in \R$. Then for $(V, \mathcal{F}_{G_4^{00}})$, we consider the leaf $\widetilde{L}$ ($ \subset V_{++} \subset V$) determined as follows
		\[\widetilde{L} = \{\widetilde{v} \in V: \widetilde{x}^*_3-\frac{\widetilde{x}^*_2 \widetilde{x}^*_5}{\widetilde{x}^*_4} =c; \, \, \widetilde{x}^*_4>0,\, \widetilde{x}^*_5>0\}. \]
		For any $\widetilde{v} = (\widetilde{x}^*_1, \widetilde{x}^*_2, \widetilde{x}^*_3, \widetilde{x}^*_4, \widetilde{x}^*_5, \widetilde{x}^*, \widetilde{y}^*) \in V$, looking back at the homeomorphism $h_1: V \to V$, it is plain that $h_1(v) = \widetilde{v}$ is equivalent to
		 \begin{align} \notag \widetilde{v} =
		 	  & \, \,(x^*_1,x^*_3, x^*_2, x^*_5, x^*_4, x^*, y^*)\\		
		 	\notag \Leftrightarrow &  \, \, x^*_1 = \widetilde{x}^*_1, x^*_2 = \widetilde{x}^*_3, x^*_3 = \widetilde{x}^*_2, x^*_4 = \widetilde{x}^*_5, x^*_5 = \widetilde{x}^*_4, x^* = \widetilde{x}^*, y^* = \widetilde{y}^*
		 \end{align}	
		 Therefore
		 \begin{align} \notag v \in L \Leftrightarrow & \,\,  x^*_2-\frac{x^*_3 x^*_4}{x^*_5} = c; \, \, x^*_4 > 0, \, x^*_5 > 0\\		
		 	\notag \Leftrightarrow & \,\, \widetilde{x}^*_3-\frac{\widetilde{x}^*_2 \widetilde{x}^*_5}{\widetilde{x}^*_4} =c; \, \, \widetilde{x}^*_4>0, \, \widetilde{x}^*_5>0\\
		 	\notag \Leftrightarrow &  \, \, h_1(v) = \widetilde{v} \in \widetilde{L}
	 	\end{align}		 
		i.e. $h_1(L) = \widetilde{L}$ for $L \subset V_{++}$. Similarly, $h_1(L) = \widetilde{L}$ for $L$ in $V_{-+}$, $V_{--}$, $V_{+-}$ and $h_1$ sends the leaves of $(V, \mathcal{F}_{G_2})$ onto those of $(V, \mathcal{F}_{G_4^{00}})$.

		\item Next, we take an arbitrary leaf $L$ of $(V, \mathcal{F}_{G_3})$. Without loss of generality, assume $L \subset V_{++} \subset V$, i.e. $L$ is determined as follows
		\[L  = \{v \in V: \frac{x^*_2}{x^*_4} - \frac{x^*_3}{x^*_5}=c; \, \, x^*_4 > 0, \, x^*_5 > 0\}\]
		where $c$ is some constant, $c \in \R$. Then for $(V, \mathcal{F}_{G_9})$, we consider the leaf $\widetilde{L}$ ($ \subset V_{++} \subset V$) determined as follows
		\[\widetilde{L} = \{\widetilde{v} \in V: \frac{\widetilde{x}^*_2}{\widetilde{x}^*_4} - \frac{\widetilde{x}^*_3}{\widetilde{x}^*_5}+\ln \vert \widetilde{x}^*_4 \vert=c; \, \, \widetilde{x}^*_4>0,\, \widetilde{x}^*_5>0\}. \]
		For any $\widetilde{v} = (\widetilde{x}^*_1, \widetilde{x}^*_2, \widetilde{x}^*_3, \widetilde{x}^*_4, \widetilde{x}^*_5, \widetilde{x}^*, \widetilde{y}^*) \in V$, looking back at the homeomorphism $h_2: V \to V$, it is plain that $h_2(v) = \widetilde{v}$ is equivalent to
		\begin{align} \notag \widetilde{v} =
		& \, \,(x^*_1,x^*_2 - x^*_4 \ln \vert x^*_4 \vert, x^*_3, x^*_4, x^*_5, x^*, y^*)\\		
		\notag \Leftrightarrow &  \, \, x^*_1 = \widetilde{x}^*_1, x^*_2 = \widetilde{x}^*_2 + \widetilde{x}^*_4 \ln \vert \widetilde{x}^*_4 \vert, x^*_3 = \widetilde{x}^*_3, x^*_4 = \widetilde{x}^*_4, x^*_5 = \widetilde{x}^*_5, x^* = \widetilde{x}^*, y^* = \widetilde{y}^*
		\end{align}	
		Therefore
		\begin{align} \notag v \in L \Leftrightarrow & \,\,  \frac{x^*_2}{x^*_4} - \frac{x^*_3}{x^*_5}=c; \, \, x^*_4 > 0, \, x^*_5 > 0\\		
		\notag \Leftrightarrow & \,\, \frac{\widetilde{x}^*_2}{\widetilde{x}^*_4} - \frac{\widetilde{x}^*_3}{\widetilde{x}^*_5}+\ln \vert \widetilde{x}^*_4 \vert=c; \, \, \widetilde{x}^*_4>0, \, \widetilde{x}^*_5>0\\
		\notag \Leftrightarrow &  \, \, h_2(v) = \widetilde{v} \in \widetilde{L}
		\end{align}		 
		i.e. $h_2(L) = \widetilde{L}$ for $L \subset V_{++}$. Similarly, $h_2(L) = \widetilde{L}$ for $L$ in $V_{-+}$, $V_{--}$, $V_{+-}$. Thus, $h_2$ sends the leaves of $(V, \mathcal{F}_{G_3})$ onto those of $(V, \mathcal{F}_{G_9})$.
		
		\item  Similarly, we take an arbitrary leaf $L$ of $(V, \mathcal{F}_{G_3})$. Without loss of generality, assume $L \subset V_{++} \subset V$, i.e. $L$ is determined as follows
		\[L  = \{v \in V: \frac{x^*_2}{x^*_4} - \frac{x^*_3}{x^*_5}=c\}; \, \, x^*_4 > 0, \, x^*_5 > 0\}\]
		where $c$ is some constant, $c \in \R$. Then for $(V, \mathcal{F}_{G^{\lambda}_{10}})$, we consider the leaf $\widetilde{L}$ ($ \subset V_{++} \subset V$) determined as follows
		\[\widetilde{L} = \{\widetilde{v} \in V: \frac{\widetilde{x}^*_2}{\widetilde{x}^*_4} - \frac{\widetilde{x}^*_3}{\widetilde{x}^*_5}+ \ln \frac{\vert \widetilde{x}^*_5 \vert}{{\vert \widetilde{x}^*_4 \vert}^{\lambda}} =c; \, \, \widetilde{x}^*_4>0,\, \widetilde{x}^*_5>0\}. \]
		For any $\widetilde{v} = (\widetilde{x}^*_1, \widetilde{x}^*_2, \widetilde{x}^*_3, \widetilde{x}^*_4, \widetilde{x}^*_5, \widetilde{x}^*, \widetilde{y}^*) \in V$, looking back at the homeomorphism $h_3: V \to V$, it is plain that $h_3(v) = \widetilde{v}$ is equivalent to
		\begin{align*}  \widetilde{v} =
		& \;(x^*_1,x^*_2 + \lambda x^*_4 \ln \vert x^*_4 \vert, x^*_3 + x^*_5 \ln \vert x^*_5 \vert, x^*_4, x^*_5, x^*, y^*)\\		
		 \Leftrightarrow &  \; x^*_1 = \widetilde{x}^*_1, x^*_2 = \widetilde{x}^*_2 - \lambda \widetilde{x}^*_4 \ln \vert \widetilde{x}^*_4 \vert, x^*_3 = \widetilde{x}^*_3-  \widetilde{x}^*_5\ln \vert \widetilde{x}^*_5 \vert,\\
	& \; x^*_4 = \widetilde{x}^*_4, x^*_5 = \widetilde{x}^*_5, x^* = \widetilde{x}^*, y^* = \widetilde{y}^*.
			\end{align*}	
		For this reason
		\begin{align} \notag v \in L \Leftrightarrow & \,\,  \frac{x^*_2}{x^*_4} - \frac{x^*_3}{x^*_5}=c; \, \, x^*_4 > 0, \, x^*_5 > 0\\		
		\notag \Leftrightarrow & \,\, \frac{\widetilde{x}^*_2}{\widetilde{x}^*_4} - \frac{\widetilde{x}^*_3}{\widetilde{x}^*_5}+ \ln \frac{\vert \widetilde{x}^*_5 \vert}{{\vert \widetilde{x}^*_4 \vert}^{\lambda}} =c; \, \, \widetilde{x}^*_4>0, \, \widetilde{x}^*_5>0\\
		\notag \Leftrightarrow &  \, \, h_3(v) = \widetilde{v} \in \widetilde{L}
		\end{align}		 
		i.e. $h_3(L) = \widetilde{L}$ for $L \subset V_{++}$. Similarly, $h_3(L) = \widetilde{L}$ for $L$ in $V_{-+}$, $V_{--}$, $V_{+-}$. Thus, $h_3$ sends the leaves of $(V, \mathcal{F}_{G_3})$ onto those of $(V, \mathcal{F}_{G^{\lambda}_{10}})$.
		
		\item  Finally, we take an arbitrary leaf $L$ of $(V, \mathcal{F}_{G_2})$. Without loss of generality, assume $L \subset V_{++} \subset V$, i.e. $L$ is determined as follows
		\[L  = \{v \in V: x^*_2-\frac{x^*_3 x^*_4}{x^*_5} = c; \, \, x^*_4 > 0, \, x^*_5 > 0\}\]
		where $c$ is some constant, $c \in \R$. Then for $(V, \mathcal{F}_{G_3})$, we consider the leaf $\widetilde{L}$ ($ \subset V_{++} \subset V$) determined as follows
		\[\widetilde{L} = \{\widetilde{v} \in V: \frac{\widetilde{x}^*_2}{\widetilde{x}^*_4} - \frac{\widetilde{x}^*_3}{\widetilde{x}^*_5}=c; \, \, \widetilde{x}^*_4>0,\, \widetilde{x}^*_5>0\}. \]
		For any $\widetilde{v} = (\widetilde{x}^*_1, \widetilde{x}^*_2, \widetilde{x}^*_3, \widetilde{x}^*_4, \widetilde{x}^*_5, \widetilde{x}^*, \widetilde{y}^*) \in V$, looking back at the homeomorphism $h: V \to V$, it is plain that $h(v) = \widetilde{v}$ is equivalent to
		\begin{align} \notag \widetilde{v} =
		& \, \,(x^*_1,x^*_2x^*_4, x^*_3, x^*_4, \frac{x^*_5}{x^*_4}, x^*, y^*)\\		
		\notag \Leftrightarrow &  \, \, x^*_1 = \widetilde{x}^*_1, x^*_2 = \frac{\widetilde{x}^*_2 }{\widetilde{x}^*_4}, x^*_3 = \widetilde{x}^*_3, x^*_4 = \widetilde{x}^*_4, x^*_5 = \widetilde{x}^*_4 \widetilde{x}^*_5, x^* = \widetilde{x}^*, y^* = \widetilde{y}^*.
		\end{align}	
		Hence
		\begin{align} \notag v \in L \Leftrightarrow & \,\,  x^*_2-\frac{x^*_3 x^*_4}{x^*_5} = c; \, \, x^*_4 > 0, \, x^*_5 > 0\\		
		\notag \Leftrightarrow & \,\, \frac{\widetilde{x}^*_2}{\widetilde{x}^*_4} - \frac{\widetilde{x}^*_3}{\widetilde{x}^*_5}=c; \, \, \widetilde{x}^*_4>0, \, \widetilde{x}^*_5>0\\
		\notag \Leftrightarrow &  \, \, h(v) = \widetilde{v} \in \widetilde{L}
		\end{align}		 
		i.e. $h(L) = \widetilde{L}$ for $L \subset V_{++}$. Similarly, $h(L) = \widetilde{L}$ for $L$ in $V_{-+}$, $V_{--}$, $V_{+-}$. Therefore, $h$ sends the leaves of $(V, \mathcal{F}_{G_2})$ onto those of $(V, \mathcal{F}_{G_3})$.
	\end{itemize}
	Thus, two foliations $(V, \mathcal{F}_{G_2}), (V, \mathcal{F}_{G_4^{00}})$ are topological equivalent, three foliations $(V, \mathcal{F}_{G_3})$, $(V, \mathcal{F}_{G_9})$, $(V, \mathcal{F}_{G_{10}^{\lambda}})$ also define the same topological type and two foliations $(V, \mathcal{F}_{G_2}),$ $ (V, \mathcal{F}_{G_3})$ are also topological equivalent.  It follows that all GMD-foliations define the same topological type and Assertion \ref{TopoTypes} is proven.
		
		\item Now we prove Assertion (\ref{Type}) of Theorem \ref{TopoClass}. Since all GMD-foliations are topological equivalent, we only need to prove this assertion for GMD-foliation $(V, \mathcal{F}_{G_2})$.
		
		Let us recall that a subset $W$ in the foliated manifold $V$ of any foliation $(V, \mathcal{F})$ is said to be saturated (with respect to the foliation) if (and only if) every leaf $L$ such that $L \cap W \neq \varnothing$ then $L \subset W$. For any  saturated submanifold of $V$, the family of all leaves $L$ of $(V, \mathcal{F})$ such that $L \subset W$ forms a new foliation on $W$ denoted by $(W, \mathcal{F}_W)$ and it is called the restriction or the subfoliation of $(V, \mathcal{F})$ on $W$. 
		
		We look back at Equality (\ref{FoliatedManifold}) in Item (vi) of Remark \ref{RemarkOrbits}. This equality represents the foliated manifold $V$ of all GMD-foliations as the discrete union of four open subsets $V_{++}, \, V_{-+}, \, V_{--}, \, V_{+-}$ that are given by Equalities (\ref{SS1}) and (\ref{SS2}) in Item (vi) of Remark \ref{RemarkOrbits}. Evidently, these subsets are four connected components of $V$ which are saturated with respect to all GMD-foliations. 
		
		Next, we consider the GMD-foliation $(V, \mathcal{F}_{G_2})$. For convenience, we will denote its restrictions on the open saturated submanifolds $V_{++},$  $V_{-+},$ $V_{--},$ $V_{+-}$ of the foliated manifold $V$ by $\mathcal{F}_{++}, \, \mathcal{F}_{-+}, \mathcal{F}_{--}, \mathcal{F}_{+-}$, respectively. 
		
		Let $p: V \rightarrow \R$ be the map that defines as follows
		\[p(x^*_1,x^*_2, x^*_3, x^*_4, x^*_5, x^*, y^*) := x^*_2 - \frac{x^*_3x^*_4}{x^*_5}\]
		for any $(x^*_1,x^*_2, x^*_3, x^*_4, x^*_5, x^*, y^*) \in V$. It can be verified that $p_{++} : = h\vert _{V_{++}}$ is a submersion and $p_{++}: V_{++} \rightarrow \R$ is a fibration on the real line $\R$ with connected (and simply connected) fibers. Moreover, $\mathcal{F}_{++}$ comes from this fibration. Similarly, foliations $\mathcal{F}_{-+}, \mathcal{F}_{--}, \mathcal{F}_{+-}$ are also come from fibrations on $\R$ with connected (and simply connected) fibers. Therefore, $(V, \mathcal{F}_{G_2})$ comes from a fibration with connected fibers on $\R \sqcup \R \sqcup \R \sqcup \R$ and so is the  type $\mathcal{F}_{GMD}$. The proof is~complete.
	\end{enumerate}
\end{proof}
As an immediate consequence of Theorem \ref{TopoClass} and the result of Connes in \cite[Section 5, p. 16]{Con82}, we have the following result.

\begin{cor}\label{Cor39}
	The Connes' $C^*$-algebras of $GMD$-foliations are determined as follows
	\[C^*(\mathcal{F}_{GMD}) \cong \bigl(C_0(\R) \oplus C_0(\R) \oplus C_0(\R) \oplus C_0(\R)\bigr) \otimes \mathcal{K}. \]
	where $C_0(\R)$ is the algebra of continuous complex-valued functions defined on $\R$ vanishing at infinity and $\mathcal{K}$ denotes the $C^*$-algebra of compact operators on an (infinite dimensional) separable Hibert space.
\end{cor}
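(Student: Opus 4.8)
The plan is to deduce the corollary directly from Assertion~(\ref{Type}) of Theorem~\ref{TopoClass} together with Connes' computation of the $C^*$-algebra of a foliation arising from a fibration. Recall from \cite[Section~5, p.~16]{Con82} that if a foliation $(W,\mathcal{F}_W)$ is given by the fibres of a smooth submersion $p\colon W\to B$ onto a Hausdorff manifold $B$, and if these fibres are connected and simply connected, then the holonomy groupoid of $(W,\mathcal{F}_W)$ is canonically equivalent to $B$, the holonomy is trivial, and consequently
\[
C^*(W,\mathcal{F}_W)\;\cong\;C_0(B)\otimes\mathcal{K},
\]
where $\mathcal{K}$ is the algebra of compact operators on the separable infinite-dimensional Hilbert space attached to a typical leaf. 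We also use the elementary fact that the $C^*$-algebra of a foliation is additive over disjoint unions of saturated open submanifolds: if $V=\bigsqcup_i V_i$ with each $V_i$ open and saturated with respect to $\mathcal{F}$, then $C^*(V,\mathcal{F})\cong\bigoplus_i C^*(V_i,\mathcal{F}_{V_i})$.

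By Assertion~(\ref{TopoTypes}) of Theorem~\ref{TopoClass} all $GMD$-foliations are topologically equivalent, so it suffices to compute $C^*(\mathcal{F}_{GMD})$ for one representative, for which we take $(V,\mathcal{F}_{G_2})$. We emphasise that the topological equivalence by itself does not transport the $C^*$-algebra; what we actually invoke is the explicit fibration structure established in the proof of Assertion~(\ref{Type}). By Equality~(\ref{FoliatedManifold}) the foliated manifold splits as the disjoint union of the four open connected components $V_{++},V_{-+},V_{--},V_{+-}$, each of which is saturated with respect to $\mathcal{F}_{G_2}$. On each component the restricted foliation is, as shown there, precisely the foliation by the fibres of the submersion
\[
p\colon (x^*_1,\dots,x^*_5,x^*,y^*)\;\longmapsto\;x^*_2-\frac{x^*_3x^*_4}{x^*_5}\;\in\;\R,
\]
whose fibres are exactly the $6$-dimensional $K$-orbits of Theorem~\ref{picture K-orbit}; these are connected and simply connected, being homeomorphic to $\R^6$ by Item~(i) of Remark~\ref{RemarkOrbits}.

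Applying Connes' result on each of the four components with $B=\R$ gives $C^*(V_{++},\mathcal{F}_{++})\cong C_0(\R)\otimes\mathcal{K}$, and likewise for $V_{-+},V_{--},V_{+-}$; the algebra $\mathcal{K}$ is the same in all four cases since all leaves are homeomorphic to $\R^6$. Combining with additivity over the disjoint union yields
\[
C^*(\mathcal{F}_{GMD})=C^*(V,\mathcal{F}_{G_2})\;\cong\;\bigoplus_{i=1}^{4}\bigl(C_0(\R)\otimes\mathcal{K}\bigr)\;\cong\;\bigl(C_0(\R)\oplus C_0(\R)\oplus C_0(\R)\oplus C_0(\R)\bigr)\otimes\mathcal{K},
\]
which is the claimed isomorphism. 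The only genuinely delicate point is checking that Connes' theorem applies in this sharp form --- that is, that every leaf is simply connected, so that the holonomy groupoid reduces to the base and no twist by a holonomy cocycle intervenes --- and this is supplied by the explicit description of the leaves in Theorem~\ref{picture K-orbit} and Remark~\ref{RemarkOrbits}; once that is secured, the rest is bookkeeping with the additivity over the four components.
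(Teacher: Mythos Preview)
Your proof is correct and follows essentially the same approach as the paper, which simply states the corollary as ``an immediate consequence of Theorem~\ref{TopoClass} and the result of Connes in \cite[Section~5, p.~16]{Con82}'' without further elaboration. Your version is a faithful expansion of that one-line argument: you correctly identify that the operative input is Assertion~(\ref{Type}) (the fibration over $\R\sqcup\R\sqcup\R\sqcup\R$ with connected, simply connected fibres), and your remark that topological equivalence alone would not suffice to transport the $C^*$-algebra is a welcome clarification.
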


\section{CONCLUSION}
We have considered exponential, connected and simply connected Lie groups corresponding to Lie algebras of dimension 7 with the nilradical $\g_{5,2}$ and 4-dimensional derived ideals. The main results of the paper are as follows:
First, we give a description of the picture of maximal dimensional $ K $-orbits of considered Lie groups as well as the geometrical characteristics of this maximal dimensional $ K $-orbits (see Theorem \ref{picture K-orbit} and Remark \ref{RemarkOrbits});	
Second, we proved that the family of all generic maximal dimensional $ K $-orbits of the considered Lie groups forms measurable foliations (in the sense of Connes). We call them  GMD-foliations;	
Third, the topological classification of all  GMD-foliations is given (see Theorem \ref{TopoClass}) and their Connes' C*-algebras are described (see Corollary \ref{Cor39}).
Furthermore, the proposed method can be applied to other Lie groups corresponding to the remaining Lie algebras listed in Subsection \ref{ListLieAlgebras}.
\vspace{1cm}

\noindent\textbf{Funding} Tuyen T. M. Nguyen was funded by Vingroup Joint Stock Company and supported by the Domestic Master/ PhD Scholarship Programme of Vingroup Innovation Foundation (VINIF), Vingroup Big Data Institute (VINBIGDATA), VINIF.2020.TS.46, and by the project SPD2019.01.37.

\end{document}